\def\real     #1{{\mathbb R^{#1}}}
\def\equationcolor {\color{black}}
\def\textcolor     {\color{black}}
\def\bcoleq    {\begin{equation}\equationcolor}
\def\ecoleq    {\textcolor\end{equation}}
\def\bcoleqn   {\equationcolor\begin{eqnarray}}
\def\ecoleqn   {\end{eqnarray}\textcolor}
\def\cosec {\operatorname{cosec}}
\def\sec {\operatorname{sec}}
\def\C        {\mathbb{C}}
\def\S        {\mathbb{S}}
\def\CP        {\mathbb{CP}}
\def\gm{{\operatorname{g}_M}}
\def\gn{{\operatorname{g}_N}}
\def\gk{{\operatorname{g}_{M\times N}}}
\def\rn{{{R}_N}}
\def\rk{{\tilde{R}}}
\def\rind{{R}}
\def\gind{\operatorname{g}}
\DeclareMathOperator*{\rank}{rank}
\newtheorem{theorem}{Theorem}[section]
\newtheorem{mythm}{Theorem}
\newtheorem{lemma}[theorem]{Lemma}
\newtheorem*{thma}{Theorem A}
\newtheorem*{thmb}{Theorem B}
\newtheorem*{thmc}{Theorem C}
\newtheorem{corollary}[theorem]{Corollary}
\newtheorem{proposition}[theorem]{Proposition}
\theoremstyle{definition}
\newtheorem*{assumption*}{$\lambda_{1}$-Condition}
\newtheorem{remark}[theorem]{Remark}
\def\pproof#1{\@ifnextchar[\opargproof
{\opargproof[\it Proof of #1.]}}
\def\opargproof[#1]{\par\noindent {\bf #1 }}
\numberwithin{equation}{section}
\begin{document}

\title[Rigidity of the Hopf fibration]{Rigidity of the Hopf fibration}
\author[M. Markellos]{\textsc{Michael Markellos}}
\address{Michael Markellos\newline
University of Ioannina\newline
Section of Algebra \& Geometry\newline
45110 Ioannina, Greece\newline
{\sl E-mail address:} {\bf mmarkellos@hotmail.gr}
}
\author[A. Savas-Halilaj]{\textsc{Andreas Savas-Halilaj}}
\address{Andreas Savas-Halilaj\newline
University of Ioannina\newline
Section of Algebra \& Geometry\newline
45110 Ioannina, Greece\newline
{\sl E-mail address:} {\bf ansavas@uoi.gr}
}

\renewcommand{\subjclassname}{  \textup{2000} Mathematics Subject Classification}
\subjclass[2000]{Primary 53C44, 53C42, 57R52, 35K55}
\keywords{Minimal graphs, Hopf fibration, Riemannian submersions.}
\thanks{The authors would like to acknowledge support by the General Secretariat for Research and Innovation (GSRI) and the
Hellenic Foundation for Research and Innovation (HFRI) Grant No:133.}
\parindent = 0 mm
\hfuzz     = 6 pt
\parskip   = 3 mm
\date{}

\begin{abstract}
In this paper, we study minimal maps between euclidean spheres.
The Hopf fibrations provide explicit examples of such minimal maps. Moreover, their
corresponding graphs have second fundamental form of constant norm. We prove that a minimal submersion
from $\S^3$ to $\S^2$ whose Gauss map satisfies a suitable pinching condition must be weakly conformal and with
totally geodesic fibers. As a
consequence, we obtain that an equivariant minimal submersion from $\S^3$ to $\S^2$ coincides with the
Hopf fibration. Furthermore, we prove that a minimal map $f:\S^3\to\S^2$ with constant singular values and constant norm of the second fundamental form is either
constant or, up to isometries, coincides with the Hopf fibration.
\end{abstract}

\maketitle
\section{Introduction}
Let $f:M\to N$ be a smooth map between two manifolds $M$ and $N$. It is
a fundamental problem to find {\em canonical representatives} in the homotopy class of
$f$. By a canonical representative is usually meant a map in the homotopy class of the given
map $f$ which is a critical point of a suitable functional. In the mid-1960's, Eells \& Sampson \cite{eells} introduced the {\it harmonic maps} as critical points of the energy functional, in order to attack the
aforementioned problem. However, it is not so easy to produce harmonic maps between Riemannian
manifolds, especially between spheres.

Another important functional that we may consider in the space of smooth maps is the volume functional.
Given a smooth map $f:M\to N$ between Riemannian manifolds
$(M,\gm)$ and $(N,\gn)$, let us denote its graph in the Riemannian product space $M\times N$ by
$$\varGamma(f):=\{(x,f(x))\in M\times N:x\in M\}.$$
Following the terminology introduced by Schoen \cite{schoen2}, a map whose graph is minimal submanifold
is called a {\em minimal map}.

There is a very interesting class of non-trivial maps from odd-dimensional spheres to projective
spaces, the so-called {\em Hopf fibrations} \cite{hopf}; see Section \ref{DGHFalex}. These
maps are described by the actions
$$
\mathbb{S}^{1}\hookrightarrow\mathbb{S}^{2n+1}\to\mathbb{CP}^n,
\quad \mathbb{S}^3\hookrightarrow\mathbb{S}^{4n+3}\to\mathbb{QP}^n
\quad\text{and}\quad\mathbb{S}^7\hookrightarrow\mathbb{S}^{15}\to\mathbb{OP}^{1}.
$$
The fibers of the Hopf fibrations are great circles of the euclidean sphere. Submersions from spheres whose fibers are geodesic circles
are called {\it great circle fibrations}. The space of all great circle fibrations
(not necessarily harmonic or minimal) of odd-dimensional spheres
is infinite dimensional. In particular, there is an abundance of such maps that are not
obtained by linear repositioning of the Hopf fibration.
According to beautiful results of Gluck \& Warner
\cite{gluck3}, Yang
\cite{yang1,yang2}, and McKay \cite{mckay}, for any great circle submersion of
$\mathbb{S}^{2n+1}$ into $\mathbb{CP}^n$,
there is a diffeomorphism of the sphere $\mathbb{S}^{2n+1}$  carrying it to the Hopf fibration.

One aim of this paper is to understand the Hopf fibrations in geometric terms, to explore their
geometry, and to show that their graphs are rigid in a suitable sense. It turns out that the Hopf fibrations are minimal
Riemannian submersions and their graphs have second fundamental form of constant norm; see Section \ref{DGHFalex}.
In the case of minimal submersions from $\S^3$ into $\S^2$ we show that the converse is also true.
More precisely, we show the following:

\begin{mythm}\label{thmc}
Let $f:U\to\mathbb{S}^2$ be a minimal map, where $U$ is an open subset of $\S^3$.
If $f$ has constant singular values and $\varGamma(f)$ constant norm of the second fundamental form, then the singular
values are equal. If additionally,  $U=\S^3$ and $f$ is non-constant, then there exists an isometry $T:\mathbb{S}^3\to\S^3$ such that $f=\varPhi\circ\varPi\circ T$, where
$\varPi$ is the
Hopf fibration and $\varPhi:\S^2(1/2)\to\S^2$ is the dilation from the
sphere $\S^2(1/2)$ of radius $1/2$.
\end{mythm}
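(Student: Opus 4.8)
The plan is to analyze the local structure of a minimal map $f:U\to\S^2$ with constant singular values via the second fundamental form of its graph $\varGamma(f)\subset\S^3\times\S^2$. Write $\lambda_1,\lambda_2$ for the (constant) singular values of $df$, ordered $\lambda_1\ge\lambda_2\ge 0$. The graph $\varGamma(f)$ is a $3$-dimensional submanifold of the $5$-dimensional product, so it has a rank-$2$ normal bundle. I would choose, as is standard in the theory of minimal graphs (following Schoen and the work on the Gauss map), a singular-value adapted frame: an orthonormal frame $\{e_1,e_2,e_3\}$ on $U$ diagonalizing $f^*\gind_{\S^2}$, together with the induced orthonormal frame on $\varGamma(f)$ and an adapted orthonormal normal frame built from the $\lambda_i$'s. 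In such a frame the components of the second fundamental form $A$ of $\varGamma(f)$ are expressible in terms of the $\lambda_i$, the connection coefficients of the adapted frame, and the covariant derivatives of $f$; the minimality of $f$ (i.e. $\trace A=0$) gives a first set of relations.

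**Exploiting constancy of $\lambda_i$ and of $|A|$.**

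The key leverage comes from differentiating. Since the $\lambda_i$ are constant, the Codazzi equations for $\varGamma(f)$ in the product space $\S^3\times\S^2$ — which involve $\nabla A$ and the ambient curvature $\riem_{\S^3\times\S^2}$ — turn into algebraic-type constraints linking the connection coefficients of the adapted frame to the $\lambda_i$ and to the curvatures of $\S^3$ and $\S^2$. Simultaneously, $|A|^2=\mathrm{const}$ kills the obvious "gradient" term that would otherwise appear. I would combine these with the Gauss equation: the induced metric on $\varGamma(f)$ has a curvature determined on one hand by $A$ via Gauss, and on the other hand it is a specific metric on $U\subset\S^3$ built from $\gind_{\S^3}$ and $f^*\gind_{\S^2}$ with the constant weights coming from $\lambda_i$. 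Matching the two expressions, using minimality and $|A|=\mathrm{const}$, I expect to force $\lambda_1=\lambda_2$. Heuristically: if $\lambda_1\ne\lambda_2$ the adapted frame has a genuinely preferred direction $e_3$ (the kernel direction, since $f$ maps into a surface, $df$ has nontrivial kernel generically) versus the $\{e_1,e_2\}$-plane, and the rigidity of $\S^3\times\S^2$ together with constant $|A|$ is incompatible with such splitting unless the two nonzero directions balance — i.e. $\lambda_1=\lambda_2=:\lambda$. This equality step is where I expect the real computational and conceptual work to lie; it is the analogue, in this submersion setting, of the "equal singular values" conclusions that appear in Schoen-type and Lawson-Osserman-type rigidity results, and it must be extracted purely from the structure equations since no pinching hypothesis is available here.

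**From equal singular values to the Hopf fibration.**

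Once $\lambda_1=\lambda_2=\lambda$ on all of $U$, the map $f$ is weakly conformal: $f^*\gind_{\S^2}=\lambda^2(g_1)$ on the rank-$2$ horizontal distribution $\mathcal H=\ker(df)^\perp$, with a $1$-dimensional kernel $\mathcal V=\ker(df)$. Thus, when $U=\S^3$ and $f$ is non-constant, $\lambda>0$ and $f$ is a (weakly) conformal submersion $\S^3\to\S^2$ with $1$-dimensional fibers. I would next show the fibers are great circles and the submersion is, after the standard dilation normalization $\varPhi:\S^2(1/2)\to\S^2$, a Riemannian submersion — i.e. $\lambda=\tfrac12$ (or whatever constant makes it Riemannian onto $\S^2(1/2)$). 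This uses again the constancy of $|A|$ and of $\lambda$: the O'Neill tensors of the submersion, the second fundamental form of the fibers, and $A$ are all interrelated, and constancy pins the geometry of the fibers down to totally geodesic circles of a fixed length; minimality of $f$ plus weak conformality then make $f$ a harmonic Riemannian submersion $\S^3\to\S^2(1/2)$ with totally geodesic fibers. Finally, I invoke the classification of Riemannian submersions $\S^3\to\S^2$ with totally geodesic (great-circle) fibers: by the results of Gluck–Warner / Escobales / Ranjan cited in the introduction, any such submersion is, up to an isometry $T$ of $\S^3$, the Hopf fibration $\varPi$. Composing with $\varPhi$ gives $f=\varPhi\circ\varPi\circ T$, completing the proof. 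The main obstacle throughout is the first half: squeezing $\lambda_1=\lambda_2$ out of Gauss–Codazzi for $\varGamma(f)$ in $\S^3\times\S^2$ using only constancy of the singular values and of $|A|$, with no pinching assumption to lean on.
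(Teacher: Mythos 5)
Your overall strategy matches the paper's: work in the singular--value adapted frame, feed the constancy of the singular values and of $|A|$ into the Gauss--Codazzi equations of $\varGamma(f)\subset\S^3\times\S^2$, force the two nonzero singular values to coincide, and then reduce to a classification of conformal/Riemannian circle fibrations of $\S^3$. But the heart of the theorem is precisely the step you defer: proving $\lambda=\mu$. Your justification for it is a heuristic (``the rigidity of $\S^3\times\S^2$ together with constant $|A|$ is incompatible with such a splitting''), and nothing in the proposal identifies the actual mechanism. In the paper this step is a genuinely long computation: constancy of the singular values kills six of the ten components of $A$ (since $h^4_{12},h^4_{22},h^4_{23},h^5_{13},h^5_{23},h^5_{33}$ are derivatives of $\arctan\lambda$ and $\arctan\mu$), minimality reduces the rest to four functions $\chi,\psi,z,\tau$, constancy of $|A|^2$ forces $\chi\psi$ to be a negative constant, and then \emph{eight} separate Codazzi identities produce a coupled first--order system whose algebraic consequences must be differentiated again and analyzed in subcases ($\tau=0$ or not, $z=0$ or not), each ending in a contradiction such as $\lambda^2=\mu^2=4$ or $3\lambda^2+\mu^2=\lambda^2\mu^2=3\mu^2+\lambda^2$. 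There is no pinching hypothesis to shortcut this, and a Gauss-equation ``curvature matching'' argument of the kind you sketch does not obviously close. You also omit the degenerate case where one of the two potentially nonzero singular values vanishes identically; the paper handles it separately via the Bochner identity $0=\Delta u_1=-|A|^2u_1-2\mu^2u_1^3$, forcing $f$ to be constant. As written, the first half of your proof is a plan, not a proof.

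The second half is essentially sound and takes a mildly different route from the paper. Once $\lambda=\mu$ is constant and positive, you correctly note that minimality forces $A(e_1,e_1)=0$, hence $B(e_1,e_1)=0$ and the fibers are great circles, so after rescaling $f$ becomes a Riemannian submersion $\S^3\to\S^2(1/2)$ with totally geodesic fibers, and you invoke Escobales' classification to conclude. The paper instead quotes Heller's theorem on conformal fibrations of $\S^3$ by circles and then argues separately (via Liouville's theorem) that the conformal diffeomorphism of $\S^3$ must be an isometry because it carries great--circle fibers to great--circle fibers. Your route via Escobales is arguably more direct for this particular theorem, since at this stage the conformal factor is already known to be constant; either is acceptable. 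The gap is entirely in the first half.
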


A related classical problem is the Bernstein-type problem, which determines when a minimal submanifold
must be totally geodesic.
In the last years there has been a lot of research in this direction; for example, see \cite{assimos,ding,h-s1,h-s2,jost1,jost2,jost3,jost4,wangm,savas1,swx}.
A result in
the same spirit as our Theorem \ref{thmc} was obtained by Jost, Xin \& Yang \cite{jost5}. More precisely,
they proved that a minimal, coassociative, 4-dimensional graph with
constant singular values in $\real{7}$ either is flat or an open piece of the Lawson-Osserman
cone.

Let us mention a similarity of Theorem \ref{thmc} with the Chern conjecture \cite{chern,chern1}, which asks
for the classification of minimal submanifolds with constant norm of the second fundamental form
in spheres. In general, the conjecture is still open and is settled
only in the case of 3-dimensional hypersurfaces in $\S^4$ by the efforts of Peng \&
Terng \cite{peng1,peng2}, de Almeida \& Brito \cite{almeida}, and Chang \cite{chang}. In fact, it is shown that
a compact $3$-dimensional minimal hypersurface in $\S^4$ is either totally geodesic, a minimal Clifford torus, or
a minimal Cartan hypersurface. It is conjectured by Bryant that the same holds true even without the compactness
assumption; for more details see  \cite[page 524]{chang}.

There is an abundance of minimal maps from odd-dimensional spheres into the complex projective space that does not coincide
with the Hopf fibration. One easy way to construct such maps is by composing the Hopf fibration with a holomorphic automorphism of
the complex projective space; see for details Proposition \ref{comphol}. It turns out that these minimal maps are {\em weakly conformal}, i.e., the restriction of the
differential of $f$ on the orthogonal complement of its kernel is conformal. Hence, another interesting problem is to find conditions under which a minimal map $f:\S^{2n+1}\to\CP^n$ is
weakly conformal. In a matter of fact, motivated by a conjecture of Eells in harmonic map theory, we would expect that
any minimal map $f:\S^3\to\S^2$  must be weakly conformal; see \cite[Note 10.4.1, page 422]{baird1} and \cite[page 730]{wangg}.

Let us point out here that weakly conformal (not necessarily minimal) submersions of spheres with totally umbilical fibers were studied recently in \cite{heller,zawadzki,zawadzki2}.
According to a beautiful  result of Heller \cite[Theorem 3.7]{heller}, {\em up to conformal transformations of $\S^2$ and $\S^3$,
every conformal fibration of $\S^3$ by circles (not necessarily great circles) is the Hopf fibration.}

In the next theorem, we provide a partial answer to the problem of when a minimal submersions $f: \S^3\to\S^2$ is weakly conformal.
The quantity which will play a crucial role in our analysis is the {\em Gauss map
$\mathcal{G}$ of the
submersion $f$}. According to Baird \cite{baird}, $\mathcal{G}$ associates to each point $x\in \S^3$ the
line in the bundle $\mathbb{G}_1(\S^3)$ over $\S^3$, whose fiber at each point $x\in\S^3$ is the Grassmannian of 
oriented lines in $T_x\S^3$. Note that, the orthogonal complement $\mathcal{H}$ of the kernel of $df$ is a real
plane bundle and so it can be regarded as a complex
line bundle. By fixing at each point $x\in\S^3$ an oriented orthonormal basis of the singular value decomposition of
$df$ we may represent $\mathcal{H}_x$ as the direct sum $\operatorname{Re}(\mathcal{H}_x)\oplus
\operatorname{Im}(\mathcal{H}_x)$ of two
real lines, which we call {\em real} and {\em 
imaginary parts of $\mathcal{H}_x$, with respect to the given frame.} Note that at points where the non-zero singular 
values $\lambda_2\le\lambda_3$ of $df$ are distinct, the  choice of the aforementioned frame is unique.

\begin{mythm}\label{thma}
Let $f:\S^3\to\S^2$ be a minimal submersion whose Gauss map $\mathcal{G}$ (with respect to the graphical metric)
satisfies the condition
$$
(\lambda_3-\lambda_2)\big(\lambda_3\big|\operatorname{Im}d\mathcal{G}\big|^2-
\lambda_2\big|\operatorname{Re}d\mathcal{G}\big|^2\big)
\ge 0,
$$
where $0<\lambda_2\le\lambda_3$ are the non-zero singular values of the map $f$.
Then, $f$ is weakly conformal with totally geodesic fibers. Moreover, there exists an isometry $T:\S^3\to\S^3$ and a 
conformal diffeomorphism $\varPhi:\S^2\to\S^2$ such that
$f=\varPhi\circ\varPi\circ T,$
where $\varPi$ is the standard Hopf fibration.
\end{mythm}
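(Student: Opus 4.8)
The plan is the following. Since $f$ is a submersion, $\df$ has rank $2$ at every point, so the vertical line $\mathcal{V}=\Ker\df$ and its orthogonal complement $\mathcal{H}$ are globally defined distributions, and the non-zero singular values satisfy $0<\lambda_{2}\le\lambda_{3}$, smoothly away from the closed set $\{\lambda_{2}=\lambda_{3}\}$. I regard $\varGamma(f)$ as a minimal $3$-dimensional submanifold of the Riemannian product $\S^{3}\times\S^{2}$ and transport all computations to $\S^{3}$ by means of the graphical metric $\gind_{\varGamma}=\gind_{\S^{3}}+f^{*}\gind_{\S^{2}}$, with respect to which the projection $\varGamma(f)\to(\S^{3},\gind_{\varGamma})$ is an isometry. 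Near a point where $\lambda_{2}<\lambda_{3}$ I fix the orthonormal frame $\{e_{1},e_{2},e_{3}\}$ of $(\S^{3},\gind_{\S^{3}})$ given by the singular value decomposition of $\df$, with $e_{1}$ spanning $\mathcal{V}$ and $\df e_{2}=\lambda_{2}\varepsilon_{2}$, $\df e_{3}=\lambda_{3}\varepsilon_{3}$ for an orthonormal frame $\{\varepsilon_{2},\varepsilon_{3}\}$ of $\S^{2}$, so that $e_{2}$ and $e_{3}$ span, respectively, the real and imaginary parts of $\mathcal{H}$. In this frame the components of $d\mathcal{G}$ are the twist coefficients $\langle\cd{e_{1}}{e_{i}},e_{2}\rangle$ and $\langle\cd{e_{1}}{e_{i}},e_{3}\rangle$ of the vertical frame (taken with respect to $\gind_{\varGamma}$), and $|\operatorname{Re}d\mathcal{G}|^{2}$, $|\operatorname{Im}d\mathcal{G}|^{2}$ are the corresponding sums of squares.

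The analytic core of the argument is a Bochner-type identity on $(\S^{3},\gind_{\varGamma})$ for a suitable function of the singular values. From the minimality of $\varGamma(f)$ — equivalently, from the harmonicity of its generalized Gauss map into the Grassmann bundle of the symmetric space $\S^{3}\times\S^{2}$ (in the spirit of Ruh--Vilms), projected onto $\mathbb{G}_{1}(\S^{3})$, which expresses the tension field of $\mathcal{G}$ through the singular values — together with the Gauss--Codazzi equations of the graph and the sectional curvatures of the two sphere factors, I expect to obtain, for the natural candidate $\varphi=\lambda_{3}^{2}-\lambda_{2}^{2}$ (or another quantity vanishing exactly when $\lambda_{2}=\lambda_{3}$), an elliptic inequality of the schematic shape
\[
\lap_{\varGamma}\varphi\;\ge\;c\,(\lambda_{3}-\lambda_{2})\big(\lambda_{3}|\operatorname{Im}d\mathcal{G}|^{2}-\lambda_{2}|\operatorname{Re}d\mathcal{G}|^{2}\big)\;+\;\mathcal{Q},\qquad c>0,
\]
where $\mathcal{Q}\ge 0$ is a sum of full squares built from the second fundamental form of $\varGamma(f)$, from the off-diagonal parts of $d\mathcal{G}$ and from the favourably signed sphere curvatures, the first-order terms being absorbed by the choice of $\varphi$. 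Producing this identity — in particular isolating the pinching combination with exactly the weight $\lambda_{2}$ on $|\operatorname{Re}d\mathcal{G}|^{2}$ and $\lambda_{3}$ on $|\operatorname{Im}d\mathcal{G}|^{2}$, and checking that the remainder really is non-negative — is the delicate computation and the step I expect to be the main obstacle.

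Granting the identity, the pinching hypothesis makes the first term on the right-hand side non-negative, so $\lap_{\varGamma}\varphi\ge 0$ on the closed manifold $\S^{3}$; by the maximum principle $\varphi$ is constant and $\lap_{\varGamma}\varphi\equiv 0$, whence $\mathcal{Q}\equiv 0$ and the pinching term vanishes identically. Tracking the vanishing of the individual squares then gives $\lambda_{2}\equiv\lambda_{3}$ — so $\df$ is conformal on $\mathcal{H}$ and $f$ is weakly conformal — together with the vanishing of the second fundamental form of the fibres, i.e.\ the integral curves of $\mathcal{V}$ are geodesics of $\S^{3}$ and the fibres of $f$ are totally geodesic; at the coincidence set $\{\lambda_{2}=\lambda_{3}\}$ these conclusions follow by continuity, and one may alternatively run the whole argument through an integrated Bochner identity to bypass any regularity issue there.

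It remains to identify $f$. Being a weakly conformal submersion with totally geodesic fibres, $f$ is a conformal fibration of $\S^{3}$ by great circles, so by Heller's classification \cite[Theorem 3.7]{heller} there exist conformal diffeomorphisms $\varPsi$ of $\S^{3}$ and $\varPhi$ of $\S^{2}$ with $f=\varPhi\circ\varPi\circ\varPsi$, where $\varPi$ is the Hopf fibration. Then $\varPsi$ carries the foliation of $\S^{3}$ by the fibres of $f$ onto the Hopf foliation, and both of these consist of great circles; a short additional argument, which is exactly the point where the geodesy of the fibres (not merely their being circles) is used, shows that a conformal diffeomorphism of $\S^{3}$ taking one great-circle foliation to another must be an isometry. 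Setting $T:=\varPsi$ therefore yields $f=\varPhi\circ\varPi\circ T$ with $T$ an isometry and $\varPhi$ conformal, which is the assertion.
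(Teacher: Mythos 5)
Your overall strategy---a maximum-principle/Bochner argument applied to a scalar function measuring the deviation of $f$ from weak conformality, followed by Heller's classification and a Liouville-type argument to upgrade the conformal diffeomorphism of $\S^3$ to an isometry---is exactly the route the paper takes. But the proposal has a genuine gap at its center: the Bochner-type identity is never produced, and you explicitly flag it as ``the delicate computation and the step I expect to be the main obstacle.'' That identity \emph{is} the theorem; everything after it is soft. Moreover, the candidate function you put forward, $\varphi=\lambda_3^2-\lambda_2^2$, together with the proposed direction of the inequality ($\Delta\varphi\ge \text{pinching}+\mathcal{Q}$, i.e.\ subharmonicity), is unlikely to work as stated: even granting constancy of $\varphi$ from the maximum principle, a constant value of $\lambda_3^2-\lambda_2^2$ need not be zero, and you would still have to extract $\lambda_2=\lambda_3$ from the vanishing of an unspecified remainder $\mathcal{Q}$. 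The function that actually does the job is the angle (Jacobian-type) function
$$
w=u_1+u_2=\frac{1+\lambda_2\lambda_3}{\sqrt{(1+\lambda_2^2)(1+\lambda_3^2)}}\le 1,
$$
which equals $1$ precisely at points of conformality, and whose Laplacian in the graphical metric has the form
$$
\Delta w=-w\,(\text{sums of squares})-(\lambda_3-\lambda_2)\,u_1\Big(\tfrac{1}{\lambda_3}{\sum}_k(h^5_{1k})^2-\tfrac{1}{\lambda_2}{\sum}_k(h^4_{1k})^2\Big)-2(\lambda_3-\lambda_2)^2u_1^3 .
$$
The middle term is exactly $-u_1$ times the pinching quantity (after translating $|\operatorname{Re}d\mathcal{G}|^2$ and $|\operatorname{Im}d\mathcal{G}|^2$ into the components $h^4_{1k},h^5_{1k}$ of the second fundamental form), so the hypothesis makes $w$ \emph{super}harmonic, and the strictly negative last term $-2(\lambda_3-\lambda_2)^2u_1^3$ is what forces $\lambda_2=\lambda_3$ at an interior minimum of $w$ (note $u_1>0$ everywhere since $f$ is a submersion). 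Without deriving this formula---which requires the full structure equations expressing the Levi--Civita connection of $\gind$ and the normal connection of the graph in terms of the singular values and the $h^\alpha_{ij}$---your argument does not close. The smoothness issue you mention is also handled more cleanly than by ``integrating'': $w$ is globally smooth, and if $w<1$ at its minimum point then $\lambda_2<\lambda_3$ near that point, so the frame and the identity are available exactly where you need them.

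Two further points. First, total geodesy of the fibers does not follow merely from ``tracking squares'': after $w\equiv 1$ one reads off from the vanishing of the squared terms that $h^4_{22}=-h^4_{33}$ and $h^5_{22}=-h^5_{33}$, and only then does minimality give $h^4_{11}=h^5_{11}=0$, hence $A(e_1,e_1)=0$ and $\df(\nabla^{\S^3}_{e_1}e_1)=0$, so the fibers are great circles. Second, your final step (a conformal diffeomorphism of $\S^3$ carrying one great-circle fibration to another is an isometry) is correct in substance but is only asserted; the clean way to finish is Liouville's theorem, writing $T=T_1\circ T_2$ with $T_1\in O(4)$ and $T_2(x)=\varrho x-b$ in stereographic coordinates, and using the great-circle property of both families of fibers to force $\varrho=1$ and $b=0$. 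This last gap is minor and routine compared with the missing Laplacian identity.
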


Let $f:\S^3\to\S^2$ be a weakly conformal minimal submersion with totally geodesic fibers. Then, for
any choice of an orthonormal frame of the singular value decomposition we have that
$\big|\operatorname{Re}d\mathcal{G}\big|=\big|\operatorname{Im}d\mathcal{G}\big|.$
Moreover, the condition on Theorem \ref{thma}
can be equivalently expressed in terms of of the second fundamental form of the graph; see Section \ref{83928012021}.

Next, we turn our attention to equivariant minimal maps with respect to the standard isometric actions
$\S^1\times\S^1\hookrightarrow\S^3$ and $\S^1\hookrightarrow\S^2.$
It turns out that the minimality of such maps is expressed in terms of a degenerate
ODE; see Section \ref{equiv}. As an application of Theorem \ref{thma}, we deduce the following
uniqueness result.

\begin{mythm}\label{thmb}
Let $f:\mathbb{S}^3\to\mathbb{S}^2$ be an equivariant minimal submersion. Then $f$ is the composition of the
standard Hopf fibration with the dilation from the radius $1/2$ sphere into the unit sphere.
\end{mythm}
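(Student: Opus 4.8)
The plan is to carry out the equivariant reduction of Section~\ref{equiv}, to verify that the pinching hypothesis of Theorem~\ref{thma} holds for \emph{every} equivariant minimal submersion, and then to extract the exact form of $f$ from the resulting first-order description. Write $\S^3=\{(z_1,z_2)\in\complex{2}:|z_1|^2+|z_2|^2=1\}$, use the coordinates $z_1=\cos s\,e^{i\theta_1}$, $z_2=\sin s\,e^{i\theta_2}$ with $s\in[0,\pi/2]$, and identify $\S^2$ with the unit sphere of $\complex{1}\times\real{1}$. Intertwining the standard $\S^1\times\S^1$-action on $\S^3$ with the standard $\S^1$-action on $\S^2$ forces
$$
f(s,\theta_1,\theta_2)=\big(\sin\mu(s)\,e^{i(p\theta_1+q\theta_2)},\ \cos\mu(s)\big)
$$
for a profile function $\mu$ and integers $(p,q)$ read off from the induced homomorphism $\S^1\times\S^1\to\S^1$; smoothness at $s=0$ and $s=\pi/2$ together with the submersion property then force $|p|=|q|=1$ and confine $\mu$ to run monotonically between endpoint values in $\{0,\pi\}$. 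In this picture $\Ker df$ is spanned by the Killing field $q\,\partial_{\theta_1}-p\,\partial_{\theta_2}$, the two non-zero singular values of $df$ are $\lambda_2(s),\lambda_3(s)\in\{\,|\mu'(s)|,\ \sin\mu(s)/(\sin s\cos s)\,\}$, and minimality of $\varGamma(f)$ is equivalent to a second-order ODE for $\mu$ that degenerates at the endpoints.

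From these data I would next compute the Gauss map $\mathcal{G}$ and the quantities $|\operatorname{Re}d\mathcal{G}|^2$, $|\operatorname{Im}d\mathcal{G}|^2$ — or, equivalently, the relevant component of the second fundamental form of $\varGamma(f)$, as in the discussion following Theorem~\ref{thma} — all of which depend on $s$ alone. The main obstacle is to prove that, for an equivariant minimal submersion,
$$
(\lambda_3-\lambda_2)\big(\lambda_3\big|\operatorname{Im}d\mathcal{G}\big|^2-\lambda_2\big|\operatorname{Re}d\mathcal{G}\big|^2\big)\ \ge\ 0
$$
throughout $\S^3$. I would attempt this by substituting the explicit expressions for $\lambda_2,\lambda_3$ and for $d\mathcal{G}$, using the minimality ODE to eliminate $\mu''$, and reducing the statement to an elementary inequality in the variables $s$, $\mu$ and $\mu'$; the reflection symmetry $s\leftrightarrow\pi/2-s$, $p\leftrightarrow q$ of the reduced problem, the monotonicity of $\mu$, and its prescribed behaviour at $s=0,\pi/2$ should then fix the required signs.

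Once the pinching is verified, Theorem~\ref{thma} applies and shows in particular that $f$ is weakly conformal, i.e.\ $|\mu'(s)|=\sin\mu(s)/(\sin s\cos s)$, with totally geodesic fibers. This is now a first-order ODE, whose solutions are $\mu(s)=2\arctan(\lambda\tan s)$, $\lambda>0$; imposing in addition the second-order minimality equation (which $f$, being minimal, already satisfies) together with the smoothness of $f$ at $s=0,\pi/2$ singles out $\lambda=1$, hence $\mu(s)=2s$. Combined with $|p|=|q|=1$ — the remaining sign choices being absorbed into isometries of $\S^3$ and of $\S^2$ — this says precisely that $f=\varPhi\circ\varPi$, where $\varPi:\S^3\to\S^2(1/2)$ is the standard Hopf fibration and $\varPhi:\S^2(1/2)\to\S^2$ is the dilation; this is the assertion. (The representation $f=\varPhi\circ\varPi\circ T$ already furnished by Theorem~\ref{thma} is of course consistent with this.)
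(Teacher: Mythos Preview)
Your strategy coincides with the paper's: reduce to the profile ODE, force $|p|=|q|=1$, verify the pinching hypothesis of Theorem~\ref{thma}, and conclude. Two points of comparison are worth noting.

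First, the paper obtains $k^2=l^2=1$ not from smoothness of the map but by Taylor--expanding the minimality ODE \eqref{Eq: ODE alpha} at $s=0$ and $s=\pi/2$: the denominator behaves like $O(s^2)$ while the numerator like $O\big(a_s(0)(1-k^2)s\big)$, so the equation forces $k^2=1$ (and symmetrically $l^2=1$). Your argument via smoothness at the poles and nonvanishing of $\mu'(0)$ is also valid, but it is a different route and you should say a word about why $\mu'(0)\neq 0$ and why smooth extension of $\sin\mu(s)\,e^{iq\theta_2}$ at $z_2=0$ forces $|q|=1$.

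Second, the pinching verification is much cleaner than you anticipate and does \emph{not} require the minimality ODE, reflection symmetry, or monotonicity. Once $k^2=l^2=1$, the explicit formulas of Proposition~\ref{secsing} give $h^4_{11}=h^4_{12}=h^5_{11}=h^5_{13}=0$, so only $h^4_{13}$ and $h^5_{12}$ survive; plugging these in yields
\[
\mathcal{C}=(\lambda_3-\lambda_2)\Big(\tfrac{1}{\lambda_3}(h^5_{12})^2-\tfrac{1}{\lambda_2}(h^4_{13})^2\Big)=\frac{(\lambda_3-\lambda_2)^2}{(1+\lambda_2^2)(1+\lambda_3^2)}\ge 0,
\]
a manifest square. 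No second derivative $\mu''$ appears in these components, so there is nothing to eliminate.

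There is, however, a genuine gap in your last step (and the paper's own proof glosses over the same point). You claim that imposing the second--order minimality equation on the weakly conformal family $\mu(s)=2\arctan(\lambda\tan s)$ singles out $\lambda=1$. It does not: a direct substitution shows that \emph{every} $\lambda>0$ satisfies the minimality ODE \eqref{Eq: ODE alpha} with $k^2=l^2=1$. Indeed, with $D=\cos^2 s+\lambda^2\sin^2 s$ one has $a_s=2\lambda/D$, $\sin a=2\lambda\sin s\cos s/D$, $\cos a=(\cos^2 s-\lambda^2\sin^2 s)/D$, and the three terms of \eqref{Eq: ODE alpha} combine to
\[
\frac{2\lambda}{\sin s\cos s\,(D^2+4\lambda^2)}\Big(-2(\lambda^2-1)\sin^2 s\cos^2 s+\cos 2s\cdot D-(\cos^2 s-\lambda^2\sin^2 s)\Big)=0
\]
identically. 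This is also consistent with Proposition~\ref{comphol}: composing the Hopf fibration with the $\S^1$--equivariant M\"obius transformations $z\mapsto\lambda z$ of $\S^2$ produces a one--parameter family of equivariant minimal submersions with non--constant singular values when $\lambda\neq 1$. Thus the conclusion you can legitimately draw from Theorem~\ref{thma} is $f=\varPhi\circ\varPi\circ T$ with $\varPhi$ conformal, not that $\varPhi$ is the bare dilation; if you want the stronger statement you need an additional argument (or a corrected statement of the theorem).
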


\section{Maps between Riemannian manifolds}
In this section, we briefly discuss the geometry of maps between Riemannian manifolds following
the notation in \cite{savas1,savas2,savas3,savas4}.

\subsection{Notation} Let $(M,\gm)$ and $(N,\gn)$ be two Riemannian manifolds of dimension $m$ and $n$, respectively.
The induced metric on $M\times N$ will be denoted by
$\gk=\gm\times \gn.$
Often we denote the product metric by $\langle\cdot\,,\cdot\rangle$. A smooth map
$f:M\to N$ defines an embedding $F:M\to M\times N$, by
$F(x)=(x,f(x)),$
for any $x\in M$ The {\em graph} of the map $f$ is defined to be the submanifold $\varGamma(f)=F(M)$. Since $F$ is an embedding,
it induces another Riemannian metric
$\gind=F^*\gk$
on $M$. The two natural projections
$\pi_{M}:M\times N\to M$ and $\pi_{N}:M\times N\to N$
are submersions,
that is they are smooth and have maximal rank. Note that the tangent bundle  of the product manifold
$M\times N$, splits as a direct sum
$T(M\times N)=TM\oplus TN.$
The four metrics $\gm,\gn,\gk$ and $\gind$ are related by
\begin{eqnarray}
\gk&=&\pi_M^*\gm+\pi_N^*\gn\,,\label{met1}\\
\gind&=&F^*\gk=\gm+f^*\gn\,.\label{met2}
\end{eqnarray}
The Levi-Civita connection $\nabla^{\gk}$ associated to $\gk$ is
related to the Levi-Civita connections $\nabla^{\gm}$ on $(M,\gm)$ and $\nabla^{\gn}$ on
$(N,\gn)$ by
$$\nabla^{\gk}=\pi_M^*\nabla^{\gm}\oplus\pi_N^*\nabla^{\gn}\,.$$
The corresponding curvature operator $\rk$ on  the product $M\times N$ is related to the curvature
operators $\rm$ on $(M,\gm)$ and $\rn$ on $(N,\gn)$ by
\begin{equation*}
\rk=\pi^{*}_{M}{R}_M\oplus\pi^{*}_{N}\rn.
\end{equation*}
Denote the Levi-Civita connection on $M$ with respect to the induced metric
$\gind=F^*\gk$ by $\nabla^{\gind}$ and the curvature tensor by $\rind$.
The differential $dF$ of the map $F$ is a section in $F^{\ast}T(M\times N)\otimes T^*M$.
The covariant derivative of it is called the \textit{second fundamental tensor}
$A$ of the graph. That is,
\begin{equation*}
A(v_1,v_2)=(\nabla^{F}_{v_1}dF)v_2=\nabla^{\gk}_{dF(v_1)}dF(v_2)-dF(\nabla^{\gind}_{v_1}v_2)
\end{equation*}
for $v_1,v_2\in \mathfrak{X}(M)$, where $\nabla^{F}$ is the induced connection on
$F^{\ast}T(M\times N)\otimes T^*M$
and $\nabla$ is the Levi-Civita connection associated with $\gind.$

The trace of $A$ with respect to the metric $\gind$ is called the
{\it mean curvature vector field} of $\varGamma(f)$ and it
will be denoted by
$$H=\operatorname{trace}A.$$
Note that $H$ is a section in the normal bundle of the
submanifold. If $H$ vanishes identically the graph is called
{\em minimal}. Following Schoen's \cite{schoen2} terminology, a map $f:M\to N$ is called
a \textit{minimal map} if its graph
$\varGamma(f)$ is a minimal submanifold of the product space.

By \textit{Gauss equation} the tensors $\rind$ and $\rk$
are related by the formula
$$
(\rind-F^*\rk)(v_1,v_2,v_3,v_4)=\langle A(v_1,v_3),A(v_2,v_4)\rangle
-\langle A(v_1,v_4),A(v_2,v_3)\rangle
$$
for any $v_1,v_2,v_3,v_4\in \mathfrak{X}(M)$. Moreover, the second fundamental form $A$ satisfies the
\textit{Codazzi equation}
$$
(\nabla^\perp_{v_1}A)(v_2,v_3)-(\nabla^{\perp}_{v_2}A)(v_1,v_3)=\big(\rk\bigl(dF(v_1),dF(v_2),dF(v_3)\bigr)\big)^{\perp}
$$
for any $v_1,v_2,v_3$ on $\mathfrak{X}(M)$.

\subsection{Singular value decomposition}\label{singval}
Consider the non-negative definite symmetric $2$-tensor $f^*\gn$ on $M$.
Fix now a point $x\in M$ and assume that $r=\rank df_x$. Obviously, $r\le\min\{m,n\}$.
Suppose further that at $x$ and with respect to $\gm$, the $2$-tensor $f^*{\gn}$ has $r$ zero eigenvalues
and $q$ distinct positive eigenvalues
$$\lambda^2_{1}\le\dots\le\lambda^2_q$$
with multiplicities
$m_{1},\dots,m_q$ and with eigenspaces $E_{1},\dots,E_q$, respectively. Hence,
$$m_{1}+\dots+m_q=r.$$
The corresponding values $\{0,\lambda_{1},\dots,\lambda_q\}$ are called {\em singular values} of the differential
$df$ of $f$.
At the point $x$ consider an orthonormal basis
$$\{\zeta_{s},\alpha_{i_1},\dots,\alpha_{i_q}\},$$
with respect to $\gm$ which diagonalizes $f^*\gn$, where $s\in\{1,\dots,m-r\}$,
$i_j\in\{1,\dots,m_j\}$ and $j\in\{1,\dots,q\}$. Note that $\{\zeta_1,\dots,\zeta_{m-r}\}$ span the {\em kernel}
$\mathcal{V}$ of $df$. Moreover, at the point $f(x)$ consider an orthonormal basis $\{\beta_{k},\beta_{i_1},\dots,\beta_{i_q}\}$
with respect to the metric $\gn$, where $k\in\{1,\dots,n-r\}$, $i_j\in\{1,\dots,m_j\}$ and $j\in\{1,\dots,q\}$,
such that
$$df(\zeta_s)=0,\,\,df(\alpha_{i_1})=\lambda_{1}\beta_{i_1},\dots,df(\alpha_{i_q})=\lambda_q\beta_{i_q}.$$
The above procedure is called the \textit{singular value decomposition}
of $df$.

It is a well-known fact that, with the above ordering, the singular values give rise to continuous functions.
In a matter
of fact, they are even smooth on an open and dense subset of $M$. In particular, they are smooth on
open subsets where the corresponding multiplicities are constant and the corresponding eigenspaces are
smooth distributions.

Now we are going to define a special basis for the tangent and the normal space of the graph
in terms of the singular values. The vectors
\begin{equation}\label{tangent}
\Big\{e_s=\zeta_s,e_{i_1}=\tfrac{\alpha_{i_1}}{\sqrt{1+\lambda^2_1}},\dots,e_{i_q}=\tfrac{\alpha_{i_q}}{\sqrt{1+\lambda^2_q}}\Big\},
\end{equation}
where $s\in\{1,\dots,m-r\}$,
$i_j\in\{1,\dots,m_j\}$ and $j\in\{1,\dots,q\}$,
form an orthonormal basis with respect to the metric $\gind$ of the tangent space $T_{x}M$. Moreover, the vectors
\begin{equation}\label{normal}
\Big\{\xi_k=\beta_k,\xi_{i_1}=\tfrac{-\lambda_1\alpha_{i_1}\oplus\beta_{i_1}}{\sqrt{1+\lambda^2_1}},\dots,
\xi_{i_q}=\tfrac{-\lambda_q\alpha_{i_q}\oplus\beta_{i_q}}{\sqrt{1+\lambda^2_q}}\Big\},
\end{equation}
where $k\in\{1,\dots,n-r\}$,
$i_j\in\{1,\dots,m_j\}$ and $j\in\{1,\dots,q\}$,
give an orthonormal basis of the normal space of the
graph $\varGamma(f)$ at $f(x)$.

We will discuss now the structure of the singular values of holomorphic maps $f:M\to N$, where $M$ and
$N$ are K\"ahler manifolds of the same dimension $2m$. Let us denote the complex structures
of $M$ and $N$ by $J_M$ and $J_N$, respectively. Then, $df\circ J_M=J_N\circ df$. Moreover, the product
manifold is again a K\"ahler manifold with complex structure given by
$$J_{M\times N}=J_M\oplus J_N.$$
It turns out that the metric $\gind$ is a K\"ahler metric for the complex structure $J_M$ of $M$. Moreover, the map $F:(M,\gind,J_M)\to (M\times N,\gk,J_{M\times N})$ given by
$$
F(x)=(x,f(x))
$$
is a holomorphic isometric immersion. Consequently, any holomorphic map between K\"ahler manifolds is minimal; see also
\cite{eells1}. Fix again a point $x\in M$. Since the map $f$ is holomorphic, there exists an orthonormal basis
$\{\alpha_i,J_M\alpha_i\}$, $i\in\{1,\dots,m\}$, of $T_xM$
with respect to $\gm$ and an orthonormal basis $\{\beta_i,J_N\beta_i\}$, $i\in\{1,\dots,m\}$, of $T_{f(x)}M$
with respect to $\gn$, such that
\begin{equation}\label{SKahler}
df(\alpha_i)=\lambda_i\beta_i \quad\text{and}\quad df(J_M\alpha_i)=\lambda_iJ_N\beta_i
\end{equation}
for any $i\in\{1,\dots,m\}$. Note that here we regard the singular values without taking into account their multiplicities. Then, the vectors
\begin{equation}\label{SKahler1}
\Big\{e_{i}=\tfrac{\alpha_{i}}{\sqrt{1+\lambda _{i}^{2} }},\,\,
e_{m+i}=\tfrac{J_M\alpha_{i}}{\sqrt{1+\lambda _{i}^{2} }}\Big\}
\end{equation}
where $i\in\{1,\dots,m\}$, form an orthonormal basis with respect to the metric $\gind$ of the tangent space $T_{x}M$ and the vectors
\begin{equation}\label{SKahler2}
\Big\{\xi_{i}=\tfrac{-\lambda_i\alpha_{i}\oplus\beta_i}{\sqrt{1+\lambda _{i}^{2} }},\,\,
\xi_{m+i}=\tfrac{-\lambda_iJ_M\alpha_{i}\oplus J_N\beta_i}{\sqrt{1+\lambda _{i}^{2} }}\Big\},
\end{equation}
where $i\in\{1,\dots,m\},$ form an orthonormal basis of the normal space of the graphical submanifold
$\varGamma(f)$ at $f(x)$.

\subsection{Second fundamental form of graphs}
We will see now how the second fundamental form $A$ of the graph $\varGamma(f)$
can be written in terms of the differential $df$ and the
{\em Hessian}
$$
B(v_1,v_2)=\nabla^{f}_{v_1}df(v_2)-df(\nabla^{\gm}_{v_1}v_2)
$$
of the map $f:(M,\gm)\to(N,\gn).$

In the following lemma, we see that the difference between the connections induced by $\gm$ and $\gind$ is given in terms of the singular values and the Hessian $B$ of the map
$f$.

\begin{lemma}\label{christoffel}
The following relation holds true
$$
\nabla^{\gind}_{v_1}v_2-\nabla^{\gm}_{v_1}v_2=\big(I_m+df^tdf\big)^{-1}df^tB(v_1,v_2)
$$
for any pair of vector fields $v_1,v_2\in\mathfrak{X}(M)$, where $df^t$ is the transpose of the
differential $df:(TM,\gm)\to(TN,\gn)$ and $I_m:TM\to TM$ the
identity transformation.
\end{lemma}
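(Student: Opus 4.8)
The plan is to identify the difference tensor $D(v_1,v_2):=\nabla^{\gind}_{v_1}v_2-\nabla^{\gm}_{v_1}v_2$ by the usual Koszul-type argument, but carried out with respect to the metric $\gind=\gm+f^*\gn$. First note that $D$ is $C^\infty(M)$-bilinear, being a difference of two linear connections, and symmetric in $v_1,v_2$, since $\nabla^{\gind}$ and $\nabla^{\gm}$ are both torsion-free; for the same reason the Hessian $B$ of $f$ is symmetric. It is therefore natural to introduce the fully tensorial object $\Phi(v_1,v_2,v_3):=\gind\big(D(v_1,v_2),v_3\big)$, which is symmetric in its first two slots, and to aim for a closed formula for it.

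Next I would compute $v_1\gind(v_2,v_3)$ in two ways. On one hand, using $\gind=\gm+f^*\gn$, the metric compatibility of $\nabla^{\gm}$ and of the pull-back connection $\nabla^{f}$, and the relation $\nabla^{f}_{v_1}df(v_2)=B(v_1,v_2)+df\big(\nabla^{\gm}_{v_1}v_2\big)$, one absorbs all $\nabla^{\gm}$-terms into $\gind$ and obtains
$$v_1\gind(v_2,v_3)=\gind\big(\nabla^{\gm}_{v_1}v_2,v_3\big)+\gind\big(v_2,\nabla^{\gm}_{v_1}v_3\big)+\gn\big(B(v_1,v_2),df(v_3)\big)+\gn\big(B(v_1,v_3),df(v_2)\big).$$
On the other hand, metric compatibility of $\nabla^{\gind}$ together with $\nabla^{\gind}=\nabla^{\gm}+D$ gives $v_1\gind(v_2,v_3)=\gind\big(\nabla^{\gm}_{v_1}v_2,v_3\big)+\gind\big(v_2,\nabla^{\gm}_{v_1}v_3\big)+\Phi(v_1,v_2,v_3)+\Phi(v_1,v_3,v_2)$. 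Comparing the two expressions yields
$$\Phi(v_1,v_2,v_3)+\Phi(v_1,v_3,v_2)=\gn\big(B(v_1,v_2),df(v_3)\big)+\gn\big(B(v_1,v_3),df(v_2)\big)=:S(v_1,v_2,v_3).$$
Since $B$ is symmetric, $S$ is symmetric in its last two arguments, while $\Phi$ is symmetric in its first two; forming $\tfrac12\big(S(v_1,v_2,v_3)+S(v_2,v_1,v_3)-S(v_3,v_1,v_2)\big)$ and using these symmetries exactly as in the proof of the Koszul formula, every term cancels but one and one is left with $\Phi(v_1,v_2,v_3)=\gn\big(B(v_1,v_2),df(v_3)\big)$.

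Finally, I would rewrite this identity using the transpose. By definition of $df^t$ one has $\gn\big(B(v_1,v_2),df(v_3)\big)=\gm\big(df^tB(v_1,v_2),v_3\big)$, whereas $\gind\big(D(v_1,v_2),v_3\big)=\gm\big(D(v_1,v_2),v_3\big)+\gn\big(df\,D(v_1,v_2),df(v_3)\big)=\gm\big((I_m+df^tdf)D(v_1,v_2),v_3\big)$. As this holds for every $v_3$ and the self-adjoint operator $I_m+df^tdf$ is positive definite, hence invertible, we get $D(v_1,v_2)=(I_m+df^tdf)^{-1}df^tB(v_1,v_2)$, which is the asserted relation. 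The argument is purely formal; the only point deserving attention is the bookkeeping in the Koszul-type cancellation, which works precisely because $D$ and $B$ are symmetric.
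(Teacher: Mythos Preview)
Your proof is correct and follows essentially the same route as the paper: both arguments use the Koszul formula to establish the key identity $\gind\big(D(v_1,v_2),v_3\big)=\gn\big(B(v_1,v_2),df(v_3)\big)$ and then rewrite both sides via $df^t$ and $I_m+df^tdf$. The only cosmetic difference is that the paper works in a coordinate frame (so the Lie-bracket terms in Koszul vanish automatically) and subtracts the two Koszul expressions directly, whereas you run the same computation coordinate-free by differentiating $\gind(v_2,v_3)$ in two ways and then polarizing.
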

\begin{proof}
Consider a local coordinate system $\{\partial_1,\dots,\partial_m\}$ on $M$. From the Koszul formula and the symmetries
of $B$, we obtain that
\begin{eqnarray*}
2{\gind}\big(\nabla^{\gind}_{\partial_i}{\partial_j},\partial_k\big)\hspace{-6pt}
&-&\hspace{-6pt}2\,\gm\big(\nabla^{\gm}_{\partial_i}{\partial_j},\partial_k\big)\\
&=&\,\,\partial_i{\gind}(\partial_j,\partial_k)+\partial_j{\gind}(\partial_i,\partial_k)
-\partial_k{\gind}(\partial_i,\partial_j)\\
&&\,\,-\partial_i{\gm}(\partial_j,\partial_k)-\partial_j{\gm}(\partial_i,\partial_k)
+\partial_k{\gm}(\partial_i,\partial_j)\\
&=&\,\,\partial_i{\gn}(df(\partial_j),df(\partial_k))+\partial_j{\gn}(df(\partial_i),df(\partial_k))\\
&&\,\,-\partial_k{\gn}(df(\partial_i),df(\partial_j))\\
&=&\hspace{-6pt}2\,\gn\big(\nabla^{f^*TN}_{\partial_i}df(\partial_j),df(\partial_k)\big).
\end{eqnarray*}
By the definition of $B$ and the transpose of $df$, the above equation yields
\begin{eqnarray*}
&&\gm\big(df^tB(\partial_i,\partial_j),\partial_k\big)=\gn\big(B(\partial_i,\partial_j),df(\partial_k)\big)\\
&&\quad\quad\quad={\gind}\big(\nabla^{\gind}_{\partial_i}{\partial_j},\partial_k\big)
-\gm\big(\nabla^{\gm}_{\partial_i}{\partial_j},\partial_k\big)-\gn\big(df(\nabla^{\gm}_{\partial_i}\partial_j),df(\partial_k)\big)\\
&&\quad\quad\quad={\gind}\big(\nabla^{\gind}_{\partial_i}{\partial_j}-\nabla^{\gm}_{\partial_i}{\partial_j},\partial_k\big)\\
&&\quad\quad\quad=\gm\big(\nabla^{\gind}_{\partial_i}{\partial_j}-\nabla^{\gm}_{\partial_i}{\partial_j},\partial_k\big)
+\gn\big( df(\nabla^{\gind}_{\partial_i}{\partial_j}-\nabla^{\gm}_{\partial_i}{\partial_j}),df(\partial_k)\big)\\
&&\quad\quad\quad=\gm\big(\nabla^{\gind}_{\partial_i}{\partial_j}-\nabla^{\gm}_{\partial_i}{\partial_j},\partial_k\big)
+\gm\big(df^tdf(\nabla^{\gind}_{\partial_i}{\partial_j}-\nabla^{\gm}_{\partial_i}{\partial_j}),\partial_k\big)\\
&&\quad\quad\quad=\gm\big((I_m+df^tdf)(\nabla^{\gind}_{\partial_i}{\partial_j}-\nabla^{\gm}_{\partial_i}{\partial_j}),\partial_k\big).
\end{eqnarray*}
Since the difference between two connections is tensorial, we deduce that
$$
\nabla^{\gind}_{v_1}v_2-\nabla^{\gm}_{v_1}v_2=\big(I_m+df^tdf\big)^{-1}df^tB(v_1,v_2)
$$
for any pair $v_1,v_2\in\mathfrak{X}(M)$.
This completes the proof.
\end{proof}

In the next lemma, we express the second fundamental form of the graph in terms of the singular
values and the Hessian of $f$.

\begin{lemma}\label{Aform}
The second fundamental form $A$ of the graph is given by
$$
A(v_1,v_2)=\big(\nabla^{\gm}_{v_1}v_2-\nabla^{\gind}_{v_1}v_2,df(\nabla^{\gm}_{v_1}v_2-\nabla^{\gind}_{v_1}v_2)+B(v_1,v_2)\big)
$$
for any $v_1,v_2\in\mathfrak{X}(M)$. Equivalently, the second fundamental form $A$ can be written in the form
$$
A(v_1,v_2)=\big(-(I_m+df^tdf)^{-1}df^tB(v_1,v_2),(I_n+dfdf^t)^{-1}B(v_1,v_2)\big)
$$
where as usual $I_m:TM\to TM$ and $I_n:TN\to TN$ are the identity transformations.
\end{lemma}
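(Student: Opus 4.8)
The plan is to evaluate the defining expression $A(v_1,v_2)=\nabla^{\gk}_{dF(v_1)}dF(v_2)-dF(\nabla^{\gind}_{v_1}v_2)$ directly, exploiting that both the ambient metric $\gk$ and its Levi-Civita connection split as products. First I would note that, since $F(x)=(x,f(x))$, the differential decomposes as $dF(v)=(v,df(v))$ with respect to $T(M\times N)=TM\oplus TN$; thus $dF(v_2)$, viewed as a section along the graph, is the pair consisting of the vector field $v_2$ on $M$ and the section $df(v_2)$ of $f^*TN$. Because $\nabla^{\gk}=\pi_M^*\nabla^{\gm}\oplus\pi_N^*\nabla^{\gn}$ and $\pi_M\circ F=\mathrm{id}_M$, $\pi_N\circ F=f$, the connection acts componentwise, giving $\nabla^{\gk}_{dF(v_1)}dF(v_2)=\big(\nabla^{\gm}_{v_1}v_2,\ \nabla^{f}_{v_1}df(v_2)\big)$, where $\nabla^{f}$ is the pullback connection on $f^*TN$. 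By the definition of the Hessian, $\nabla^{f}_{v_1}df(v_2)=B(v_1,v_2)+df(\nabla^{\gm}_{v_1}v_2)$, and since $dF(\nabla^{\gind}_{v_1}v_2)=\big(\nabla^{\gind}_{v_1}v_2,\ df(\nabla^{\gind}_{v_1}v_2)\big)$, subtracting the two pairs yields the first asserted formula immediately.

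For the second, closed-form expression I would substitute the identity of Lemma \ref{christoffel}, namely $\nabla^{\gm}_{v_1}v_2-\nabla^{\gind}_{v_1}v_2=-(I_m+df^tdf)^{-1}df^tB(v_1,v_2)$, into the right-hand side just obtained. The $M$-component is then exactly the first slot of the claimed formula, while the $N$-component becomes $B(v_1,v_2)-df(I_m+df^tdf)^{-1}df^tB(v_1,v_2)=\big(I_n-df(I_m+df^tdf)^{-1}df^t\big)B(v_1,v_2)$. To finish one invokes the elementary push-through (Woodbury-type) identity $I_n-df(I_m+df^tdf)^{-1}df^t=(I_n+dfdf^t)^{-1}$, which is verified in a single line by multiplying both sides on the left by $I_n+dfdf^t$ and using $(I_n+dfdf^t)\,df=df\,(I_m+df^tdf)$.

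There is no real obstacle here; the lemma is essentially a repackaging of Lemma \ref{christoffel} together with the definition of $A$. The only point demanding care is bookkeeping — keeping straight the three connections in play ($\nabla^{\gk}$ on sections along $\varGamma(f)$, $\nabla^{\gm}$ on $M$, and the pullback connection $\nabla^{f}$ on $f^*TN$), correctly identifying the $TN$-component of $dF(v_2)$ with $df(v_2)$, and recording the one matrix identity used at the very end.
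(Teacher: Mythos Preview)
Your proof is correct and follows essentially the same route as the paper's: compute $A$ componentwise using the product splitting of $\nabla^{\gk}$ and the definition of the Hessian $B$, then substitute Lemma~\ref{christoffel}. The only difference is that you make explicit the push-through identity $I_n-df(I_m+df^tdf)^{-1}df^t=(I_n+dfdf^t)^{-1}$ needed for the $N$-component of the second formula, whereas the paper absorbs this into the phrase ``combining with Lemma~\ref{christoffel}.''
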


\begin{proof}
Given $v_2\in\mathfrak{X}(M)$, we have
$dF(v_2)=(v_2,df(v_2)).$
Differentiating now with respect to $v_1\in\mathfrak{X}(M)$, we get
\begin{eqnarray*}
A(v_1,v_2)&=&\nabla^{F}_{v_1}dF(v_2)-dF(\nabla^{\gind}_{v_1}v_2)\\
&=&\big(\nabla^{\gm}_{v_1}v_2-\nabla^{\gind}_{v_1}v_2,
\nabla^{f}_{v_1}df(v_2)-df(\nabla^{\gind}_{v_1}v_2)\big)\\
&=&\big(\nabla^{\gm}_{v_1}v_2-\nabla^{\gind}_{v_1}v_2,df(\nabla^{\gm}_{v_1}v_2
-\nabla^{\gind}_{v_1}v_2)
+B(v_1,v_2)\big).
\end{eqnarray*}
Combining the last formula with Lemma \ref{christoffel}, we get the desired result.
\end{proof}

As an immediate consequence of Lemma \ref{Aform}, we obtain the
following well-known characterization of minimality of
graphs in terms of harmonicity; see for example \cite{eells1}.

\begin{corollary}
Suppose that $f:(M,\gm)\to (N,\gn)$ is a smooth map.
Then, the following two statements are equivalent:
\begin{enumerate}[\rm(a)]
\item The graph $\varGamma(f)$ is a minimal submanifold of product space $M\times N$.
\medskip
\item The trace of the Hessian $B$ of the map $f$ with respect to the metric  $\gind$ is zero, that is $\operatorname{tr}_{\gind}B=0.$
\end{enumerate}
Moreover, at points where $\operatorname{rank}df=\dim N$, the
minimality of the graph is equivalent with the vanishing of the trace, with respect to $\gind$, of the $(2,1)$-tensor
$W$ given by
$$
W(v_1,v_2)=\nabla^{\gm}_{v_1}v_2-\nabla^{\gind}_{v_1}v_2,
$$
for any $v_1,v_2\in \mathfrak{X}(M)$.
\end{corollary}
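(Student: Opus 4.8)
The plan is to derive both equivalences directly from the two explicit expressions for $A$ in Lemma \ref{Aform}, using only that the operators $I_m+df^tdf$ and $I_n+dfdf^t$ are positive definite, hence invertible. First I would recall that $\varGamma(f)$ is minimal exactly when the mean curvature vector $H=\operatorname{trace}_{\gind}A$ vanishes. Fixing a point $x\in M$ and a $\gind$-orthonormal basis $\{e_1,\dots,e_m\}$ of $T_xM$ (for instance the one in \eqref{tangent}), I would note that the pointwise linear maps $(I_n+dfdf^t)^{-1}$ and $(I_m+df^tdf)^{-1}df^t$ act on the values of the Hessian $B$, not on its slots, and therefore pass through the sum $\sum_i(\,\cdot\,)(e_i,e_i)$. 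Plugging the second formula of Lemma \ref{Aform} into this sum and recording the result according to the splitting $T(M\times N)=TM\oplus TN$ would give
\[
H=\Big(-\big(I_m+df^tdf\big)^{-1}df^t\big(\operatorname{tr}_{\gind}B\big),\ \big(I_n+dfdf^t\big)^{-1}\big(\operatorname{tr}_{\gind}B\big)\Big).
\]

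From here the equivalence of (a) and (b) is immediate: since $I_n+dfdf^t$ is invertible, the $TN$-component of $H$ vanishes if and only if $\operatorname{tr}_{\gind}B=0$, and once $\operatorname{tr}_{\gind}B=0$ the $TM$-component vanishes automatically. For the statement about $W$, I would observe from the first formula of Lemma \ref{Aform} that $W(v_1,v_2)=\nabla^{\gm}_{v_1}v_2-\nabla^{\gind}_{v_1}v_2$ is precisely the $TM$-component of $A(v_1,v_2)$, and that Lemma \ref{christoffel} rewrites it as $W(v_1,v_2)=-(I_m+df^tdf)^{-1}df^tB(v_1,v_2)$; taking the $\gind$-trace as before yields $\operatorname{tr}_{\gind}W=-(I_m+df^tdf)^{-1}df^t(\operatorname{tr}_{\gind}B)$. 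At a point where $\operatorname{rank}df_x=\dim N$ the map $df_x$ is surjective, so its transpose $df^t_x$ is injective; composing with the invertible $(I_m+df^tdf)^{-1}$ preserves injectivity, and therefore $\operatorname{tr}_{\gind}W=0$ if and only if $\operatorname{tr}_{\gind}B=0$, i.e. if and only if $\varGamma(f)$ is minimal.

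There is no serious obstacle here — the whole argument is linear algebra applied pointwise to formulas already in hand. The only two places deserving an explicit word are the implication (b)$\Rightarrow$(a), which is handled by the structure of $H$ displayed above rather than by a separate computation, and the role of the hypothesis $\operatorname{rank}df_x=\dim N$, which is used solely to guarantee that $df^t_x$ is injective, so that the vanishing of $\operatorname{tr}_{\gind}W$ can be promoted back to the vanishing of $\operatorname{tr}_{\gind}B$.
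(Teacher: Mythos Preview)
Your proposal is correct and follows essentially the same approach as the paper: both arguments read off the mean curvature directly from the formulas of Lemma \ref{Aform} and reduce the equivalences to elementary linear algebra. The only cosmetic difference is that the paper uses the first expression for $A$ and the decomposition $H=dF(\operatorname{tr}_{\gind}W)+(0,\operatorname{tr}_{\gind}B)$, invoking that $dF(\operatorname{tr}_{\gind}W)$ is tangent while $H$ is normal, whereas you use the second expression and the invertibility of $(I_n+dfdf^t)^{-1}$; for the $W$ part your explicit appeal to the injectivity of $df^t$ when $\operatorname{rank}df=\dim N$ is exactly what the paper's reference to \eqref{tangent}--\eqref{normal} amounts to.
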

\begin{proof}
From Lemma \ref{Aform}, it follows that the mean curvature $H$ of the graph satisfies
$$
H=\operatorname{tr}_{\gind}A
=\big({\operatorname{tr}}_{\gind}W,df(\operatorname{tr}_{\gind}W)\big)+
\big(0,\operatorname{tr}_{\gind}B\big).
$$
Hence, the equivalence of $\rm(a)$ and $\rm(b)$ is immediate. Taking into account the second
expression in Lemma \ref{Aform} and the equations \eqref{tangent}
and \eqref{normal}, we deduce that at points where $\operatorname{rank}df=\dim N$, the
minimality of the graph is equivalent with the vanishing of the trace, with respect to the metric $\gind$, of the
$(2,1)$-tensor $W$.
This completes the proof.
\end{proof}

\section{Geometry of the Hopf fibrations}\label{DGHFalex}
According to Hurwitz's Theorem, there are precisely four normed division algebras over the real numbers;
the real numbers $\real{}$, the complex numbers $\C$, the quaternions $\mathbb{H}$, and the octonions $\mathbb{O}$.
Hopf \cite{hopf} showed that there natural submersions of unit euclidean spheres over corresponding projective spaces formed by $n$-dimensional
real normed division algebras. Here we explore the geometric properties of graphs generated from the Hopf
fibrations.

\subsection{Complex Hopf fibrations}
Let us regard the unit euclidean sphere $\mathbb{S}^{2n+1}$ as a subset of ${\C}^{n+1}$, with center at the origin, and denote by $J$ its standard
complex structure, i.e., left multiplication with $\bf i\in \C$. The vector field
$$\zeta=-Jp,$$
where $p$ is the position vector of $\mathbb{S}^{2n+1}$, is globally defined, and is called the {\em Reeb
vector field} of the sphere. Moreover, for any vector field $v$ on $\mathbb{S}^{n+1}$, the decomposition in tangent and
normal components determines a $(1,1)$-tensor field $\varphi$ and a $1$-form $\eta$ on $\mathbb{S}^{2n+1}$
such that
\begin{equation}\label{601}
Jv=\varphi(v)+\eta(v)p=\varphi(v)+\langle v,\zeta\rangle p,
\end{equation}
where $\langle\cdot\,,\cdot\rangle$ is the euclidean metric.
One can
easily check that $\varphi$ and $\eta$ satisfy the following properties:
$$
\varphi(\zeta)=0,\quad \eta \circ \varphi=0,\quad \eta(\zeta)=1\quad\text{and}\quad \varphi^{2}=-I+\eta \otimes \zeta,
$$
where $I$ is the identity transformation. Moreover, it holds
$$
\langle\varphi (v_1), \varphi(v_2)\rangle=\langle v_1, v_2\rangle-\eta(v_1)\eta(v_2),
$$
for $v_1, v_2 \in \mathfrak{X}(\mathbb{S}^{2n+1})$. Note that $(\eta,\zeta,\varphi)$ is the standard {\em contact structure} of $\S^{2n+1}$.

By a direct computation we see that for any tangent vector field on the sphere, we have
$$
\nabla^{\mathbb{S}^{2n+1}}_v\zeta=-D_vJp-\langle v,Jp\rangle p=-JD_vp+\langle Jv,p\rangle p
=-Jv+\eta(v) p.
$$
where $D$ is the euclidean Levi-Civita connection.
Hence, from \eqref{601} we get
$$
\varphi(v)=-\nabla^{\S^{2n+1}}_v\zeta,
$$
from where we deduce that the integral curves of the Reeb vector field are geodesics of the sphere.
Let now $\mathcal{V}$ be the sub-bundle generated by the Reeb vector field
and by $\mathcal{H}$ its orthogonal complement. $\mathcal{V}$ is called the {\em vertical bundle} and
$\mathcal{H}$ the {\em horizontal bundle.} Hence, for any $x\in\mathbb{S}^{2n+1}$, we have the orthogonal
decomposition
$$
T_{x}\mathbb{S}^{2n+1}=\mathcal{V}_{x}\oplus \mathcal{H}_{x}.
$$
Hence, tensor $\varphi$ is measuring how the distribution $\mathcal{H}$ is twisted within the tangent bundle of the sphere.

The {\em complex projective space} $\CP^n$ is defined as the set of equivalence classes of $\C^{n+1}-\{0\}$ under the equivalence
relation $\sim$ defined by $z \sim w$ if there is a nonzero element $\lambda\in\C$ that $z=\lambda w$. Equivalently, we may regard
$\CP^n$ as the quotient of $\S^{2n+1}$ under the group action of $\mathbb{S}^{1}\subset\C$ given
by
$$
(z_1,\dots,z_{n+1})e^{{\bf i}\theta}=(z_1e^{{\bf i}\theta},\dots,z_{n+1}e^{{\bf i}\theta}).
$$
The natural quotient map $f:\S^{2n+1}\to\CP^n$ is the so-called {\em Hopf fibration}. The map
$f$ is a submersion and clearly its fibers are great
circles. In a matter of fact, the kernel of the differential $df$ is spanned by the Reeb vector field.
We can endow $\CP^n$ with a Riemannian metric $\gind_{\mathbb{CP}^{n}}$, the so-called {\em Fubini-Study
metric}, which makes $f$ a Riemannian submersion. With this Riemannian metric, the complex projective space $\CP^n$ becomes a K\"ahler manifold with complex structure $J_{\CP^{n}}$ given by the formula
$$
df\circ\varphi=J_{\CP^{n}}\circ df.
$$
Combining a theorem due to Escobales \cite{escobales} with a result of Ucci \cite{ucci},
the complex Hopf fibration is {\em rigid} among all Riemannian
submersions with totally geodesic fibers in the following sense: {\em Let $g:\S^{2n+1}\to\CP^n$ be a Riemannian submersion
with totally geodesic fibers. Then there exists an isometry $T$ of $\S^{2n+1}$ such that $g\circ T=f.$}

Let us consider the graph over this Hopf fibration and explore its geometry.

\begin{proposition}
Let $f:\S^{2n+1}\to\CP^n$ be the complex Hopf fibration. Then, the following statements hold true:
\begin{enumerate}[\rm(a)]
\item
The second fundamental form $A$ of the graph $\varGamma(f)$ is zero along the kernel of $df$ as well as along
its orthogonal complement on the sphere.
\medskip
\item
The graph $\varGamma(f)$ is a minimal submanifold of the product $\S^{2n+1}\times\CP^n$ with squared norm
of the second fundamental form $|A|^2=n$.
\end{enumerate}
\end{proposition}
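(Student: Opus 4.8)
The plan is to push everything through Lemma \ref{Aform}, which reduces the second fundamental form $A$ to the Hessian $B$ of $f$; since $f$ is a Riemannian submersion onto $\CP^n$ with the Fubini--Study metric, every ingredient is explicit. First I would record the singular value decomposition at an arbitrary point $x\in\S^{2n+1}$: the kernel $\mathcal{V}_x$ is spanned by the Reeb field $\zeta$, and $df|_{\mathcal{H}_x}$ is a linear isometry, so all $2n$ nonzero singular values equal $1$. Hence $df^t\,df$ is the orthogonal projection of $T_x\S^{2n+1}$ onto $\mathcal{H}_x$, while $df\,df^t=I_n$ on $T_{f(x)}\CP^n$; in particular $(I_m+df^t df)^{-1}$ acts as $\tfrac12$ on $\mathcal{H}_x$, $(I_n+df\,df^t)^{-1}=\tfrac12 I_n$, and $df^t$ is an isometry of $T_{f(x)}\CP^n$ onto $\mathcal{H}_x$ carrying the orthonormal frame $\{\beta_j\}$ to the frame $\{\alpha_j\}$ with $df(\alpha_j)=\beta_j$.

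Next I would compute $B$ on the three types of pairs of vectors. If $X,Y\in\mathcal{H}_x$ are extended to basic horizontal fields $f$-related to $\bar X,\bar Y$ on $\CP^n$, the Riemannian submersion property yields $df(\nabla^{\S^{2n+1}}_XY)=\nabla^{\CP^n}_{\bar X}\bar Y=\nabla^{f}_X df(Y)$ at $x$, so $B(X,Y)=0$, and since $A$ is tensorial, $A|_{\mathcal{H}\times\mathcal{H}}=0$. Along a fiber, $df(\zeta)\equiv 0$ and $\nabla^{\S^{2n+1}}_\zeta\zeta=-\varphi(\zeta)=0$ because the fibers are geodesics, hence $B(\zeta,\zeta)=0$ and $A(\zeta,\zeta)=0$; this proves (a), and as the mean curvature $H=\operatorname{trace}_{\gind}A$ is the sum of the diagonal terms $A(\zeta,\zeta)$ and $A(e_i,e_i)$, all of which vanish, the graph is minimal. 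For a mixed pair I would use that $[\zeta,X]$ is vertical when $X$ is basic, together with $\nabla^{\S^{2n+1}}_v\zeta=-\varphi(v)$, $\varphi(\mathcal{H})\subset\mathcal{H}$ and $df\circ\varphi=J_{\CP^n}\circ df$, to obtain
$$
B(\zeta,X)=-df\big((\nabla^{\S^{2n+1}}_\zeta X)^{\mathcal{H}}\big)=-df\big((\nabla^{\S^{2n+1}}_X\zeta)^{\mathcal{H}}\big)=df(\varphi(X))=J_{\CP^n}\,df(X).
$$

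Finally I would assemble $|A|^2$ in the orthonormal bases \eqref{tangent} and \eqref{normal}: with $e_0=\zeta$ and $e_i=\alpha_i/\sqrt2$, $i=1,\dots,2n$, the vanishing above leaves only the terms $A(\zeta,e_i)$ (and, by symmetry of $A$, the equal terms $A(e_i,\zeta)$). Plugging $B(\zeta,\alpha_i)=J_{\CP^n}\beta_i$ into Lemma \ref{Aform} and using the first step gives $A(\zeta,\alpha_i)=\big(-\tfrac12\,df^t(J_{\CP^n}\beta_i),\ \tfrac12\,J_{\CP^n}\beta_i\big)$, whose squared $\gk$-norm equals $\tfrac14|df^t(J_{\CP^n}\beta_i)|^2+\tfrac14|J_{\CP^n}\beta_i|^2=\tfrac12$; therefore $|A(\zeta,e_i)|^2=\tfrac14$ and $|A|^2=2\sum_{i=1}^{2n}|A(\zeta,e_i)|^2=n$. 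The hard part will be the bookkeeping of the middle step, specifically verifying the submersion identity $B|_{\mathcal{H}\times\mathcal{H}}=0$ and the mixed identity $B(\zeta,X)=J_{\CP^n}\,df(X)$ — this is where the contact structure $(\eta,\zeta,\varphi)$, the relation $df\circ\varphi=J_{\CP^n}\circ df$, and the geodesy of the fibers all enter; once $B$ is known, the remainder is just the linear algebra packaged in Lemma \ref{Aform}.
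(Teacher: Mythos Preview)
Your proof is correct and follows essentially the same approach as the paper's: compute the Hessian $B$ on vertical, horizontal, and mixed pairs using the Riemannian submersion property and the geodesy of the fibers, then feed it into Lemma \ref{Aform}. The only cosmetic difference is that the paper packages the mixed term as $A(\zeta,v)=\tfrac12\big(-\varphi(v)\oplus df(\varphi(v))\big)$ via $\varphi$, whereas you write the same vector via $df^t$ and $J_{\CP^n}$ using $df\circ\varphi=J_{\CP^n}\circ df$.
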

\begin{proof}
Recall that the kernel of $df$ is generated by the Reeb vector field $\zeta$. Moreover,
the singular values of $df$ are
$\lambda_{1}=0$ and $\lambda_{2}=\cdots=\lambda_{2n+1}=1$.
Using the Koszul formula and the fact that
$f$ is a Riemannian submersion we deduce that
for any pair of horizontal vector fields $v_1,v_2\in\mathcal{H}$ we have
\begin{equation}\label{B1}
B(v_1,v_2)=\nabla^{f}_{v_1}df(v_2)-df\big(\nabla^{\S^{2n+1}}_{v_1}v_2\big)=0,
\end{equation}
for details see also \cite[Lemma 4.5.1, page 119]{baird1}.
Hence, $B$ vanishes on the horizontal bundle. Since the Reeb vector field has geodesic integral
curves and it belongs to the kernel of $df$, we obtain that
\begin{equation}\label{B4}
B(\zeta,\zeta)=0.
\end{equation}
and
\begin{equation}\label{B5}
B(\zeta,v)=-df(\nabla^{\S^{2n+1}}_{v}\zeta)=df(\varphi(v)),
\end{equation}
for any horizontal vector $v$.

From Lemma \ref{Aform} now, we deduce that $A$ vanishes also in the horizontal bundle $\mathcal{H}$
as well as in the vertical bundle $\mathcal{V}$. Therefore, $\varGamma(f)$ is a minimal
submanifold. For the mixed terms, again from Lemma \ref{Aform}, we get that
\begin{equation}\label{AC1}
A(\zeta,v)=\frac{-\varphi(v)\oplus df(\varphi(v))}{2}.
\end{equation}
Recall that the restriction of $\varphi$ on the horizontal bundle is an isometry with respect
to $\gind_{\S^{2n+1}}$. Since $f$ is a Riemannian submersion, $\varphi$ is an isometry also with respect to
the graphical metric. Indeed, for any horizontal vectors $v_1$ and $v_2$ we have
\begin{eqnarray}\label{621}
\gind(\varphi(v_1),\varphi(v_2))&=&\gind_{\S^{2n+1}}(\varphi(v_1),\varphi(v_2))+
\gind_{\CP^n}(df(\varphi(v_1)),df(\varphi(v_2)))\nonumber\\
&=&\gind_{\S^{2n+1}}(v_1,v_2)+\gind_{\S^{2n+1}}(\varphi(v_1),\varphi(v_2))=2\gind_{\S^{2n+1}}(v_1,v_2)\nonumber\\
&=&\gind(v_1,v_2).
\end{eqnarray}
Let $\{\zeta;e_1,\dots,e_{n};e_{n+1}=\varphi(e_1),\dots,e_{2n}=\varphi(e_n)\}$ be a local orthonormal frame with respect to $\gind$, where
$\{e_1,\dots,e_{2n}\}$ span the horizontal space. Then, from \eqref{AC1} and \eqref{621} we obtain that
$$
|A|^{2}=2\sum_{i=1}^{2n}|A(\zeta,e_{i})|^{2}=n.
$$
This completes the proof.
\end{proof}

\begin{remark}
In the special case where $n=1$, the Hopf fibration is a minimal Riemannian submersion between spheres.
By scaling properly the target in order to become a unit sphere, we obtain a minimal submersion
$f:\mathbb{S}^3\to\mathbb{S}^2$ with constant singular values
$\lambda_1=0$ and $\lambda_2=2=\lambda_3.$
In this case, there exists an adapted orthonormal frame
$\{e_1=\zeta,e_2,e_3;\xi_2,\xi_3\},$
with respect to which the shape operators $A^{\xi_2}$
and $A^{\xi_3}$ are given by
$$
A^{\xi_2}=
  \left( {\begin{array}{ccc}
   0 & 0 & {2}/{5}\\
   0 & 0 & 0 \\
   {2}/{5} & 0 & 0\\
  \end{array} } \right)
  \quad\text{and}\quad
  A^{\xi_3}=
  \left( {\begin{array}{ccc}
   \,\,0 & -{2}/{5} & 0\\
   -{2}/{5} &\,\,0 & 0 \\
   \,\,0 & \,\,0 & 0\\
  \end{array} } \right),
$$
respectively. Moreover, the squared norm of the second fundamental form
is equal to
$$|A|^2=16/25.$$
\end{remark}

We show now that there are plenty of minimal maps $f:\S^{2n+1}\to\CP^n$
with totally geodesic fibers other than the complex Hopf fibration.

\begin{proposition}\label{comphol}
Let $f:\S^{2n+1}\to\mathbb{CP}^n$ be the complex Hopf fibration and
$g:\mathbb{CP}^n\to\mathbb{CP}^n$ a holomorphic map. Then, the composition
$G=g\circ f$ is again a minimal map with totally geodesic fibers. In the special case
$n=1$, the squared norm of the second fundamental form $A_G$ of the graph $\varGamma(G)$ is
$$|A_G|^2=4\frac{\lambda^2(1+\lambda^2)+ |\nabla^{\CP^{1}}\lambda|^2}{(1+\lambda^2)^3},$$
where $\lambda$ is the conformal factor of $g$.
\end{proposition}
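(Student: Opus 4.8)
The plan is to combine the composition formula for Hessians with the explicit description of the Hessian of $f$ recorded in \eqref{B1}, \eqref{B4}, \eqref{B5}, and with the standard properties of the Hessian of a holomorphic map. Write $G=g\circ f$ and let $B_f$, $B_g$, $B_G$ denote the Hessians of $f$, $g$, $G$ taken with respect to the round metric on $\S^{2n+1}$ and the Fubini--Study metrics on $\CP^n$. The basic identity is
\[
B_G(v_1,v_2)=dg\big(B_f(v_1,v_2)\big)+B_g\big(df(v_1),df(v_2)\big),\qquad v_1,v_2\in\mathfrak{X}(\S^{2n+1}).
\]
Since $g$ is holomorphic between K\"ahler manifolds, differentiating $dg\circ J_{\CP^n}=J_{\CP^n}\circ dg$ and using $\nabla J=0$ shows that $B_g$ is symmetric with $B_g(J_{\CP^n}X,Y)=J_{\CP^n}B_g(X,Y)$; in particular $B_g(J_{\CP^n}X,J_{\CP^n}X)=-B_g(X,X)$ and $\operatorname{tr}B_g=0$. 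Moreover, off the nowhere dense branch locus of $g$ the differential $dg$ is invertible, so $\ker dG=\ker df=\mathbb{R}\zeta$; as the integral curves of the Reeb field $\zeta$ are great circles, hence geodesics of $\S^{2n+1}$, the map $G$ has totally geodesic fibers.

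For minimality I would use an adapted frame. At a regular point $x$, since $g$ is holomorphic choose an orthonormal basis $\{\beta_i,J_{\CP^n}\beta_i\}_{i=1}^n$ of $T_{f(x)}\CP^n$ with $dg(\beta_i)=\mu_i\beta_i'$, $dg(J_{\CP^n}\beta_i)=\mu_iJ_{\CP^n}\beta_i'$ for an orthonormal basis $\{\beta_i',J_{\CP^n}\beta_i'\}$ at $g(f(x))$; because $df|_{\mathcal H}$ is an isometry and $df\circ\varphi=J_{\CP^n}\circ df$, the vectors $\alpha_i\in\mathcal H_x$ with $df(\alpha_i)=\beta_i$ satisfy $df(\varphi\alpha_i)=J_{\CP^n}\beta_i$ and $\{\alpha_i,\varphi\alpha_i\}$ is orthonormal in $\mathcal H_x$. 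Hence the singular values of $dG$ are $\{0;\mu_1,\mu_1,\dots,\mu_n,\mu_n\}$, and by \eqref{tangent} the vectors $e_0=\zeta$, $e_i=\alpha_i/\sqrt{1+\mu_i^2}$, $e_{n+i}=\varphi\alpha_i/\sqrt{1+\mu_i^2}$ form a $\gind$-orthonormal frame. Using $df(\zeta)=0$, $B_f(\zeta,\zeta)=0$ by \eqref{B4}, and $B_f=0$ on $\mathcal H$ by \eqref{B1}, the composition identity gives $B_G(\zeta,\zeta)=0$ and $B_G(\alpha_i,\alpha_i)+B_G(\varphi\alpha_i,\varphi\alpha_i)=B_g(\beta_i,\beta_i)+B_g(J_{\CP^n}\beta_i,J_{\CP^n}\beta_i)=0$. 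Therefore $\operatorname{tr}_{\gind}B_G=0$, and $\varGamma(G)$ is a minimal submanifold by the Corollary following Lemma~\ref{Aform}.

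For $n=1$ the conformal factor of $g$ is $\lambda=\mu_1$, so $dGdG^t=\lambda^2 I_2$ and $dG^tdG=\lambda^2 P_{\mathcal H}$, where $P_{\mathcal H}$ is the orthogonal projection onto $\mathcal H$; also $|dG^tw|_{\gm}=\lambda|w|_{\gn}$ on $T\CP^1$. Substituting into the second formula of Lemma~\ref{Aform} one obtains the clean relation $|A_G(v_1,v_2)|^2=(1+\lambda^2)^{-1}|B_G(v_1,v_2)|^2$ for all $v_1,v_2$. It remains to evaluate $B_G$ on the adapted frame $e_1=\zeta$, $e_2=\alpha/\sqrt{1+\lambda^2}$, $e_3=\varphi\alpha/\sqrt{1+\lambda^2}$. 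From \eqref{B5} and $\varphi^2\alpha=-\alpha$ we get $B_f(\zeta,\alpha)=df(\varphi\alpha)$ and $B_f(\zeta,\varphi\alpha)=-df(\alpha)$, whence $B_G(\zeta,\alpha)=\lambda J_{\CP^1}\beta'$, $B_G(\zeta,\varphi\alpha)=-\lambda\beta'$, while $B_G(\alpha,\alpha)=B_g(\beta,\beta)$, $B_G(\varphi\alpha,\varphi\alpha)=-B_g(\beta,\beta)$, $B_G(\alpha,\varphi\alpha)=J_{\CP^1}B_g(\beta,\beta)$. Summing over the frame,
\[
\sum_{a,b=1}^{3}|B_G(e_a,e_b)|^2=\frac{4\lambda^2}{1+\lambda^2}+\frac{4\,|B_g(\beta,\beta)|^2}{(1+\lambda^2)^2},
\]
and therefore $|A_G|^2=\dfrac{4\lambda^2}{(1+\lambda^2)^2}+\dfrac{4\,|B_g(\beta,\beta)|^2}{(1+\lambda^2)^3}$.

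The last and most delicate point is to identify $|B_g(\beta,\beta)|^2$, for a unit vector $\beta$, with $|\nabla^{\CP^1}\lambda|^2$. Here I would invoke the structure equation of a conformal map between surfaces: differentiating $\gn(dg(X),dg(Y))=\lambda^2\gm(X,Y)$, using the symmetry of $B_g$ and the surjectivity of $dg$, one gets
\[
B_g(X,Y)=X(\ln\lambda)\,dg(Y)+Y(\ln\lambda)\,dg(X)-\langle X,Y\rangle\,dg(\operatorname{grad}\ln\lambda),
\]
so that $B_g(\beta,\beta)=\lambda\,\beta(\ln\lambda)\,\beta'-\lambda\,(J_{\CP^1}\beta)(\ln\lambda)\,J_{\CP^1}\beta'$ and hence $|B_g(\beta,\beta)|^2=\lambda^2|\nabla\ln\lambda|^2=|\nabla^{\CP^1}\lambda|^2$. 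Feeding this back in gives
\[
|A_G|^2=\frac{4\lambda^2}{(1+\lambda^2)^2}+\frac{4\,|\nabla^{\CP^1}\lambda|^2}{(1+\lambda^2)^3}=4\,\frac{\lambda^2(1+\lambda^2)+|\nabla^{\CP^1}\lambda|^2}{(1+\lambda^2)^3},
\]
as claimed. The points that require genuine care are the conformal structure equation above and keeping the Fubini--Study normalization fixed (the one making $f$ a Riemannian submersion), since this is what pins down $\lambda$ and the gradient appearing in the final formula; everything else is bookkeeping with the frame. As a sanity check, $g=\operatorname{id}$ gives $\lambda\equiv 1$ and $|A_G|^2=1=n$, in agreement with the previous proposition.
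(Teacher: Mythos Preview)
Your proof is correct and follows essentially the same route as the paper: the composition formula for Hessians, the $J$-antilinearity of $B_g$ to get minimality, the conformal structure equation $B_g(X,Y)=X(\ln\lambda)dg(Y)+Y(\ln\lambda)dg(X)-\langle X,Y\rangle\,dg(\nabla\ln\lambda)$, and the same adapted frame. The one pleasant shortcut you take in the $n=1$ case is the observation that weak conformality of $G$ gives $|A_G(v_1,v_2)|^2=(1+\lambda^2)^{-1}|B_G(v_1,v_2)|^2$ directly from Lemma~\ref{Aform}, so you only need to tabulate $|B_G|^2$ on the frame; the paper instead writes out each $A_G(e_i,e_j)$ in the normal frame $\{\xi_2,\xi_3\}$ and sums, which is the same computation unpacked one level further.
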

\begin{proof}
At first, note that the Reeb vector field $\zeta$ is unit with respect to the graphical metric too. Moreover, the spaces
$\mathcal{V}$ and $\mathcal{H}$ are again perpendicular with respect to the graphical
metric. Furthermore, the differential of $G$ commutes with $\varphi$ and the complex
structure of $\CP^n$. From this observation, it follows that the restriction
of $\varphi$ on the horizontal bundle $\mathcal{H}$ is an isometry also with respect to the induced metric $\gind$.
 By a straightforward computation, the Hessian of $G$ is given by
\begin{equation}\label{Bcompose}
B_{G}(v_1,v_2)=B_g(df(v_1),df(v_2))+dg(B_f(v_1,v_2)),
\end{equation}
for any vector fields $v_1$ and $v_2$ of the sphere $\S^{2n+1}$. Hence,
$$
B_G(\zeta,\zeta)=B_g(df(\zeta),df(\zeta))+dg(B_f(\zeta,\zeta))=0.
$$
From Lemma \ref{Aform} we obtain that $A(\zeta,\zeta)=0$. Consequently,
$$
\nabla^{\gind}_{\zeta}\zeta=\nabla^{\S^{2n+1}}_{\zeta}\zeta=0,
$$
from where it follows that the integral curves of $\zeta$ are geodesics
also with respect to the graphical metric. Moreover, because the map $g$ is holomorphic,
$f$ is a Riemannian submersion, and \eqref{B1}, we have that
\begin{eqnarray*}
&&B_G(v,v)+B_G(\varphi(v),\varphi(v))=B_g(df(v),df(v))+B_g(df(\varphi(v)),df(\varphi(v)))\\
&&\quad\quad\quad\,\,\,\,\,\,=B_g(df(v),df(v))+B_g(J_{\CP^{n}}df(v),J_{\CP^{n}}df(v))\\
&&\quad\quad\quad\,\,\,\,\,\,=B_g(df(v),df(v))-B_g(df(v),df(v))\\
&&\quad\quad\quad\,\,\,\,\,\,=0,
\end{eqnarray*}
for any $v\in\mathcal{H}$. Consequently, from Lemma \ref{Aform} it follows that
$$
A_G(v,v)+A_G(\varphi(v),\varphi(v))=0,
$$
for any horizontal vector $v$. Since $\varphi$ satisfies $\varphi^2=-I_{2n}$ on $\mathcal{H}$, we can always
find a local orthonormal frame of the form
$$\{e_1=\zeta,\,e_i,\,e_{n+i}=\varphi(e_i)\}_{i\in\{2,\dots,n\}}$$
with respect to the graphical metric $\gind$. Hence,
$$
A_G(e_i,e_i)+A_G(e_{n+i},e_{n+i})=0,
$$
for any $i\in\{2,\dots,n+1\}$. Hence, the composition $G=g\circ f$ is again a minimal map.

Let us examine now the case where $f:\S^3\to\CP^1$ and $g:\CP^1\to\CP^1$ is a holomorphic map. In this case, the map
$g$ is conformal. Let us denote by $\lambda$ its conformal factor. Consider at a fixed point $x\in\S^3$
an orthonormal with respect to $\gind_{\S^{3}}$ frame $\{\alpha_1=\zeta,\alpha_2,\alpha_3=\varphi(\alpha_2)\}$ and at the point $f(x)$ an orthonormal with respect to $\gind_{\CP^{1}}$
frame $\{\beta_2,\beta_3\}$ that
diagonalizes the differential of $f$. Moreover, consider at $G(x)$ an orthonormal, with respect to
$\gind_{\CP^{1}}$, frame
$\{{\tilde\beta_2},{\tilde\beta}_3\}$ such that
$$
dg(\beta_2)=\lambda{\tilde\beta}_2\quad\text{and}\quad dg(\beta_3)=\lambda{\tilde\beta}_3.
$$
Note that the frames described above satisfy
$J_{\CP^{1}}\beta_2=\beta_3$ and $J_{\CP^{1}}{\tilde\beta}_2={\tilde\beta}_3.$
Moreover, the vectors
$$
e_1=\zeta,\quad e_2=\frac{\alpha_2}{\sqrt{1+\lambda^2}}\quad\text{and}\quad
e_3=\frac{\alpha_3}{\sqrt{1+\lambda^2}}
$$
an orthonormal frame with respect to the graphical metric. Since,
$$
dG(e_1)=0,\quad dG(e_2)=\frac{\lambda\,{\tilde\beta}_2}{\sqrt{1+\lambda^2}}\quad\text{and}\quad
dG(e_3)=\frac{\lambda\,{\tilde\beta}_3}{\sqrt{1+\lambda^2}},
$$
the map $G$ is weakly conformal. Assume that $\lambda$ is not identically zero, since otherwise we
have nothing to show. By  Koszul formula we get that, away from the zero set of $\lambda$,
it holds
\begin{eqnarray*}
B_g(\varepsilon_1,\varepsilon_2)=\varepsilon_1(\log\lambda)dg(\varepsilon_2)
+\varepsilon_2(\log\lambda)dg(\varepsilon_1)-\gind_{\CP^{1}}(\varepsilon_1,\varepsilon_2)dg\big(\nabla^{\CP^{1}}\log\lambda\big),
\end{eqnarray*}
where $\varepsilon_1$, $\varepsilon_2$ are tangent vectors of $\CP^1$. Hence,
from \eqref{Bcompose}, \eqref{B1}, \eqref{B4} and \eqref{B5} and we deduce that
$$
B_G(\alpha_1,\alpha_1)=0,\quad B_G(\alpha_1,\alpha_2)=\lambda\,{\tilde\beta}_3\quad\text{and}\quad
B_G(\alpha_1,\alpha_3)=-\lambda\,{\tilde\beta}_2.
$$
Furthermore,
$$
B_G(\alpha_2,\alpha_2)=\beta_2(\lambda)\tilde\beta_2-\beta_3(\lambda){\tilde\beta}_3
=-B_G(\alpha_3,\alpha_3)
$$
and
$$
B_G(\alpha_2,\alpha_3)=\beta_3(\lambda)\tilde\beta_2+\beta_2(\lambda){\tilde\beta}_3.
$$
From Lemma \ref{Aform}, it follows that
$$
A_G(e_1,e_1)=0,\,\, A_G(e_1,e_2)=\frac{\lambda\,\xi_3}{1+\lambda^2}\,\,\text{and}
\,\, A_G(e_1,e_3)=\frac{-\lambda\,\xi_2}{1+\lambda^2}.
$$
Moreover,
$$
A_G(e_2,e_2)=\frac{\beta_2(\lambda)\xi_2-\beta_3(\lambda)\xi_3}{(1+\lambda^2)^{3/2}}=-A_G(e_3,e_3)
$$
and
$$
A_G(e_2,e_3)=\frac{\beta_3(\lambda)\xi_2+\beta_2(\lambda)\xi_3}{(1+\lambda^2)^{3/2}}
$$
Therefore,
$$
|A_G|^2=\frac{4\lambda^2}{(1+\lambda^2)^2}+\frac{4|\nabla^{\CP^{1}}\lambda|^2}{(1+\lambda^2)^3}.
$$
This completes the proof.
\end{proof}

\begin{remark}
There is a plethora of surjective holomorphic maps
$g:\CP^n\to\CP^n$ which are not isometries. On the other hand, it is a well-known fact
in Algebraic Geometry that there are no non-constant holomorphic maps $g:\CP^n\to\CP^m$,
if $n>m$. Moreover, there are many examples of conformal diffeomorphisms of $\S^{2}=\CP^1$ 
which are not rotations. In fact, the subgroup of orientation preserving conformal diffeomorphisms of $\S^{2}$ 
is isomorphic with the {\em projective linear group} $PGL_{2}(\C),$ which contains $SO(3)$ as a maximal compact 
subgroup.
\end{remark}

\begin{remark}
We expect that any minimal map with totally geodesic fibers arise as the composition of
the Hopf fibration with a holomorphic map. This is the case where
the minimal map is already weakly conformal. Another interesting question is whether a
minimal map $f:\S^3\to\S^2$ with constant norm of the second fundamental form coincides
with the Hopf fibration.
\end{remark}

\subsection{Quaternionic Hopf fibrations}
As a vector space, the {\em quaternions} are
$$
\mathbb{Q}=\{a_0+a_1\,{\bf i}_1+a_2\,{\bf i}_2+a_3\,{\bf i}_3:a_0,a_1,a_2,a_3\in\real{4}\}.
$$
They become an associative algebra with $1$ as the multiplicative unit via
$$
{\bf i}_1^2={\bf i}_2^2={\bf i}_3^2=-1,\quad{\bf i}_1{\bf i}_2={\bf i}_3=-{\bf i}_2{\bf i}_1\quad\text{and cyclic permutations.}
$$
Consider now $\S^{4n+3}$ as a hypersurface of
$\mathbb{Q}^{n+1}\simeq\real{4n+4}$ with  center at the origin.
Let us also consider the complex structures
$J_1,J_2,J_3$ on $\real{4n+4}$, respectively, induced by the left multiplication for ${\bf i}_1, {\bf i}_2, {\bf i}_3$
in $\mathbb{Q}$. Each of these complex structures
when applied to the position
vector $p$ of the sphere gives a globally defined vector field
$$\zeta_i=-J_ip,$$
where $i\in\{1,2,3\}$, on $\S^{4n+3}$.
As in the complex case, we denote by $\eta_i$ the dual form associated with $\zeta_i$ and by $\varphi_i$
the $(1,1)$-tensor given by
$$
J_iv = \varphi_i(v) + \eta_i(v)p,
$$
for any $i\in\{1,2,3\}$ and $v\in\mathfrak{X}(\S^{4n+3})$. Clearly, each tensor $\varphi_i$ satisfies
$$
\varphi^2_i=-I_{4n+3}+\eta_i\otimes\zeta_i
$$
The pairs $\{\eta_i,\zeta_i,\varphi_i\}_{i\in\{1,2,3\}}$, give rise to the standard $3$-Sasakian structure on $\S^{4n+3}$.
The $1$-forms $\eta_{i}$ and the $(1,1)$-tensors
$\varphi_{i}$ are related through the identities
\begin{eqnarray}\label{Eq: equations3contact}
\varphi_{k}&=&\varphi_{i}\circ\varphi_{j}-\eta_{j}\otimes \zeta_{i}=-\varphi_{j}\circ\varphi_{i}+\eta_{i}\otimes \zeta_{j},\nonumber\\
 \zeta_{k}&=&\varphi_{i}(\zeta_{j})=-\varphi_{j}(\zeta_{i}),\\
\eta_{k}&=&\eta_{i}\circ\varphi_{j}=-\eta_{j}\circ \varphi_{i}\nonumber,
\end{eqnarray}
for an even permutation $(i, j, k)$ of $(1, 2, 3)$. Moreover, from the Weingarten formula we obtain
$$
\varphi_{i}(v)=-\nabla^{\mathbb{S}^{4n+3}}_{v}\zeta_{i},
$$
for any $i\in\{1,2,3\}$ and $v\in\mathfrak{X}(\S^{4n+3})$.
By using (\ref{Eq: equations3contact}), we get
\begin{equation}\label{Eq: connectionxiS4n+3}
\begin{array}{lll}
\nabla^{\mathbb{S}^{4n+3}}_{\zeta_{1}}\zeta_{1}=0, &\nabla^{\mathbb{S}^{4n+3}}_{\zeta_{1}}\zeta_{2}=
\zeta_{3}, & \nabla^{\mathbb{S}^{4n+3}}_{\zeta_{1}}\zeta_{3}=-\zeta_{2},\\
\\
\nabla^{\mathbb{S}^{4n+3}}_{\zeta_{2}}\zeta_{1}=-\zeta_{3}, &\nabla^{\mathbb{S}^{4n+3}}_{\zeta_{2}}\zeta_{2}=0,
& \nabla^{\mathbb{S}^{4n+3}}_{\zeta_{2}}\zeta_{3}=\zeta_{1}, \\
\\
\nabla^{\mathbb{S}^{4n+3}}_{\zeta_{3}}\zeta_{1}=\zeta_{2}, &\nabla^{\mathbb{S}^{4n+3}}_{\zeta_{3}}\zeta_{2}=
-\zeta_{1}, & \nabla^{\mathbb{S}^{4n+3}}_{\zeta_{3}}\zeta_{3}=0,
\end{array}
\end{equation}
and so the integral curves of the vector fields $\zeta_i$, $i\in\{1,2,3\}$, are geodesic circles.

The distribution
$\mathcal{V}$ formed by $\zeta_1,\zeta_2$ and $\zeta_3$ is integrable and its
leaves are geodesic $3$-spheres of $\S^{4n+3}$. As usual, vector fields on $\mathcal{V}$ are called
{\em vertical} and vector fields on the orthogonal complement $\mathcal{H}$ of $\mathcal{V}$ are
called {\em horizontal}.

The {\em quaternionic projective space} $\mathbb{QP}^n$ is defined as vectors in $\mathbb{Q}^{n+1}-\{0\}$
modulo left scalar multiplication. The space $\mathbb{QP}^n$ can be obtained also by
identifying the leaves of the vertical distribution $\mathcal{V}$ on the sphere $\S^{4n+3}$.
As in the complex case, we obtain a natural projection
$f:\S^{4n+3}\to\mathbb{QP}^n$ which can be made into a Riemannian submersion. The
 map $f$ is called {\em quaternionic Hopf fibration}. Equivalently, $\mathbb{QP}^n$ can be regarded
 as the set of orbits of the natural group action of $\S^3\subset\mathbb{Q}$ on the sphere
 $\S^{4n+3}\subset\mathbb{Q}^{n+1}$. Let us mention that $\mathbb{QP}^1$ with its canonical metric is isometric
to the round $4$-sphere of radius $1/2$.

\begin{proposition}
Let $f:\S^{4n+3}\to\mathbb{QP}^n$ be the quaternionic Hopf fibration. Then, the following
statements hold true:
\begin{enumerate}[\rm(a)]
\item
The second fundamental form $A$ of the graph $\varGamma(f)$ is zero along the kernel of $df$ as well as along
its orthogonal complement on the sphere.
\medskip
\item
The graph $\varGamma(f)$ is a minimal submanifold of the product $\S^{4n+3}\times\mathbb{QP}^n$ with squared norm
of the second fundamental form $|A|^2=6n$.
\end{enumerate}
\end{proposition}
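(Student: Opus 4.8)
The plan is to follow exactly the strategy used above for the complex Hopf fibration, now working with the $3$-Sasakian structure $\{\eta_i,\zeta_i,\varphi_i\}_{i\in\{1,2,3\}}$ in place of the contact structure. First I would record the structural data of the submersion: the kernel $\mathcal{V}$ of $df$ is spanned by the vertical fields $\zeta_1,\zeta_2,\zeta_3$, so $\rank df=4n$ on the horizontal bundle $\mathcal{H}$, and since $f$ is a Riemannian submersion the singular values are $\lambda_1=\lambda_2=\lambda_3=0$ and $\lambda_4=\cdots=\lambda_{4n+3}=1$. In particular $f^*\gn$ vanishes on $\mathcal{V}$ and equals $\gm$ on $\mathcal{H}$, so by \eqref{met2} the graphical metric $\gind$ agrees with $\gm$ on $\mathcal{V}$ and equals $2\,\gm$ on $\mathcal{H}$; moreover $\{\zeta_1,\zeta_2,\zeta_3\}$ is orthonormal for both $\gm$ and $\gind$, and each $\varphi_i$ restricts to a $\gm$-isometry of $\mathcal{H}$ (because $\eta_i$ vanishes on $\mathcal{H}$ and $\langle\varphi_i(v_1),\varphi_i(v_2)\rangle=\langle v_1,v_2\rangle-\eta_i(v_1)\eta_i(v_2)$).

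Next I would compute the Hessian $B$ of $f$ on the three types of pairs. On $\mathcal{H}\times\mathcal{H}$ the tensor $B$ vanishes: since $B$ is symmetric and tensorial it suffices to evaluate it on basic horizontal fields, and for a Riemannian submersion $\nabla^{f}_{v_1}df(v_2)$ and $df(\nabla^{\S^{4n+3}}_{v_1}v_2)$ both reduce to the same horizontal lift term, exactly as in \eqref{B1}. On $\mathcal{V}\times\mathcal{V}$ one has $B(\zeta_i,\zeta_j)=-df(\nabla^{\S^{4n+3}}_{\zeta_i}\zeta_j)$, and by \eqref{Eq: connectionxiS4n+3} the vector $\nabla^{\S^{4n+3}}_{\zeta_i}\zeta_j$ is again vertical (it is $0$ or $\pm\zeta_k$), hence killed by $df$; so $B$ vanishes on $\mathcal{V}\times\mathcal{V}$ as well. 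For the mixed terms, using symmetry of $B$, $df(\zeta_i)=0$ and $\varphi_i(v)=-\nabla^{\S^{4n+3}}_v\zeta_i$, one gets $B(\zeta_i,v)=df(\varphi_i(v))$ for every horizontal $v$, the analogue of \eqref{B5}. Feeding these into the second expression of Lemma \ref{Aform} shows that $A$ vanishes on $\mathcal{H}\times\mathcal{H}$ and on $\mathcal{V}\times\mathcal{V}$, which is statement (a); tracing $A$ over any $\gind$-orthonormal frame adapted to the splitting $\mathcal{V}\oplus\mathcal{H}$ then gives $H=0$, so $\varGamma(f)$ is minimal. The same computation yields, for horizontal $v$, $A(\zeta_i,v)=\tfrac12\big(-\varphi_i(v)\oplus df(\varphi_i(v))\big)$, since on $\mathcal{H}$ one has $df^{t}df=I$ and on its image $df\,df^{t}=I$.

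Finally I would compute $|A|^2$. Choosing a $\gm$-orthonormal horizontal frame $\{\alpha_1,\dots,\alpha_{4n}\}$, the set $\{\zeta_1,\zeta_2,\zeta_3,\alpha_1/\sqrt2,\dots,\alpha_{4n}/\sqrt2\}$ is $\gind$-orthonormal. The only nonvanishing components of $A$ are the mixed ones, and since $\varphi_i$ is a $\gm$-isometry of $\mathcal{H}$ and $f$ is a Riemannian submersion, $|A(\zeta_i,\alpha_k/\sqrt2)|^2=\tfrac18\big(|\varphi_i(\alpha_k)|_{\gm}^2+|df(\varphi_i(\alpha_k))|_{\gn}^2\big)=\tfrac14$. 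Summing over the $3\cdot 4n$ such pairs and doubling for the symmetry $A(\cdot,\cdot)$ gives $|A|^2=2\cdot 3\cdot 4n\cdot\tfrac14=6n$, which is (b). The computations are all routine once the bookkeeping is set up; the point that needs genuine care is the $\mathcal{V}\times\mathcal{V}$ block, since — unlike the complex case — the vertical distribution is now $3$-dimensional and the $\zeta_i$ are not pairwise parallel, so one must invoke \eqref{Eq: connectionxiS4n+3} to see that the cross-derivatives stay vertical and hence vanish under $df$, together with keeping track of the two different normalizations of $\gind$ on $\mathcal{V}$ and on $\mathcal{H}$, which is precisely what makes the constant come out to $6n$.
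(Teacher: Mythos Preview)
Your proposal is correct and follows essentially the same route as the paper: compute $B$ on the three blocks $\mathcal{H}\times\mathcal{H}$, $\mathcal{V}\times\mathcal{V}$, $\mathcal{V}\times\mathcal{H}$ using the Riemannian-submersion identity, \eqref{Eq: connectionxiS4n+3}, and $\varphi_i(v)=-\nabla^{\S^{4n+3}}_v\zeta_i$, then feed into Lemma \ref{Aform} to obtain $A|_{\mathcal{V}\times\mathcal{V}}=A|_{\mathcal{H}\times\mathcal{H}}=0$ and $A(\zeta_i,v)=\tfrac12(-\varphi_i(v)\oplus df(\varphi_i(v)))$, and finally sum over a $\gind$-orthonormal frame adapted to the splitting to get $|A|^2=6n$. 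Your write-up is in fact a bit more explicit than the paper's about the two normalizations of $\gind$ and about why the $\mathcal{V}\times\mathcal{V}$ block vanishes, but there is no substantive difference in method.
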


\begin{proof}
As in the complex case, we can show that the spaces $\mathcal{V}$ and $\mathcal{H}$ are again perpendicular with respect to the graphical
metric $\gind$. Moreover, the restriction of each $\varphi_i$, $i\in\{1,2,3\}$, on $\mathcal{H}$ is an isometry with respect to both metrics
$\gind_{\S^{4n+3}}$ and $\gind$. Since $f$ is Riemannian submersion,
from the Koszul formula, it follows that the Hessian $B$ of $f$ vanishes on the horizontal bundle
$\mathcal{H}$. Additionally, since $\mathcal{V}$ form the kernel of $df$, from
\eqref{Eq: connectionxiS4n+3} we get
$$
B(\zeta_i,\zeta_j)=0\quad\text{and}\quad B(\zeta_i,v)=df(\varphi_i(v))
$$
for any $i,j\in\{1,2,3\}$ and $v\in\mathcal{H}$. According to Lemma \ref{Aform}, we obtain that $A$ vanishes
on $\mathcal{V}$ and $\mathcal{H}$ and that
\begin{equation}\label{AC2}
A(\zeta_i,v)=\frac{-\varphi_i(v)\oplus df(\varphi_i(v))}{2}
\end{equation}
for any $i\in\{1,2,3\}$ and $v\in\mathcal{H}$. Consider an orthonormal, with respect to
the graphical metric $\gind$, frame of the form
$$
\{e_i=\zeta_i,\,e_{l},\,\varphi_{i}(e_l)\}_{i\in\{1,2,3\};l\in\{4,\dots,n+3\}}
$$
on the sphere. Then, from \eqref{AC2} we obtain that
$$
|A|^2=6n.
$$
 This completes the proof.
\end{proof}

\subsection{Octonionic Hopf fibration}
Let us describe here the octonionic Hopf fibration.
Recall at first that as a vector space, the {\em octonions} $\mathbb{O}$ are described by
$$
\mathbb{O}=\big\{a_0{\bf i}_0+a_1{\bf i}_1+a_2{\bf i}_2+a_3{\bf i}_3+a_4{\bf i}_4+a_5{\bf i}_5+a_6{\bf i}_6+a_7{\bf i}_7:a_0,a_1,\dots,a_7\in\real{}\big\}.
$$
We can turn the space $\mathbb{O}$ of octonions into a nonassociative algebra with ${\bf i}_0$ as multiplicative unit. The standard canonical basis
$$\{{\bf i}_0,{\bf i}_1,\ldots,{\bf i}_7\}$$
for $\mathbb{O}$ satisfies the following multiplication rules:
\begin{equation*}
{\bf i}_m{\bf i}_n=\left\{
\begin{array}{ll}
{\bf i}_n, & \text{if}\,\,m=0,\\
\\
{\bf i}_m, &\text{if}\,\,n=0 ,\\
\\
-\delta_{mn}{\bf i}_0+\varepsilon_{mnk}{\bf i}_k, &\text{otherwise}
\end{array}
\right.
\end{equation*}
where $\varepsilon_{mnk}$ is the completely antisymmetric tensor with value $1$
when
$$(m,n,k)=(1,2,3),(1,4,5),(1,7,6),(2,4,6),(2,5,7),(3,4,7),(3,6,5).$$
For $n\ge 2$, the naive definition of $\mathbb{OP}^n$ as vectors in $\mathbb{O}^{n+1}-\{0\}$
modulo left scalar multiplication has the problem that the equality up to left scalar
multiplication fails to be an equivalence relation.

Consider now the unit sphere $\S^{15}$ as a hypersurface of $\mathbb{O}^{2}\simeq\real{16}$. Let us also consider the complex structures
$J_{1},J_{2},\ldots,J_{7}$ on $\real{16}$, respectively, induced by the left multiplication for ${\bf i}_1, {\bf i}_2,\ldots, {\bf i}_7$
in $\mathbb{O}$. Each of these complex structures when applied to the position
vector $p$ of the sphere gives a globally defined vector field
$$\zeta_i=-J_ip,$$
where $i\in\{1,2,\ldots,7\}$, on $\S^{15}$.
As in the complex and quaternionic case, we denote by $\eta_i$ the dual form associated with $\zeta_i$ and by $\varphi_i$
the $(1,1)$-tensor given by
$$
J_iv = \varphi_i(v) + \eta_i(v)p,
$$
for $i\in\{1,2,\ldots,7\}$ and $v\in\mathfrak{X}(\S^{15})$. Since $\mathbb{O}$ is a nonassociative algebra, 
the tensors $\varphi_i$ do not satisfy similar relations as in the quaternionic case.

The distribution
$\mathcal{V}$ formed by $\zeta_1,\zeta_2,\ldots,\zeta_7$ is integrable and its
leaves are geodesic $7$-spheres of $\S^{15}$. As usual, vector fields on $\mathcal{V}$ are called
{\em vertical} and vector fields on the orthogonal complement $\mathcal{H}$ of $\mathcal{V}$ are
called {\em horizontal}.

\begin{proposition}
Let $f:\S^{15}\to\mathbb{OP}^1=\S^8(1/2)$ be the octonionic Hopf fibration. Then, the following
statements hold true:
\begin{enumerate}[\rm(a)]
\item
The second fundamental form $A$ of the graph $\varGamma(f)$ is zero along the kernel of $df$ as well as along
its orthogonal complement on the sphere.
\medskip
\item
The graph $\varGamma(f)$ is a minimal submanifold of the product $\S^{15}\times\mathbb{S}^8(1/2)$ with squared norm
of the second fundamental form $|A|^2=28$.
\end{enumerate}
\end{proposition}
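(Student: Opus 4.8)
The plan is to follow the proofs of the complex and quaternionic Hopf fibration propositions essentially verbatim. First I collect the structural facts. The operators $J_1,\dots,J_7$ are orthogonal on $\real{16}$ and, by the left alternative law, pairwise anticommute; hence $\langle\zeta_i,\zeta_j\rangle=\langle J_ip,J_jp\rangle=\delta_{ij}$, so $\zeta_1,\dots,\zeta_7$ are orthonormal and span the kernel $\mathcal{V}$ of $df$, while $\mathcal{H}=\mathcal{V}^\perp$ has rank $8$. Because $df$ annihilates $\mathcal{V}$, relation \eqref{met2} shows that $T\S^{15}=\mathcal{V}\oplus\mathcal{H}$ stays orthogonal for the graphical metric $\gind$ and that each $\zeta_i$ is $\gind$-unit, and for horizontal $v$ one has $\varphi_i(v)=J_iv$. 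The one fact I must establish with care is that each $\varphi_i$ maps $\mathcal{H}$ into $\mathcal{H}$ and that $\varphi_i|_{\mathcal{H}}$ is an isometry for $\gind_{\S^{15}}$ — hence, $f$ being a Riemannian submersion, also for $\gind$.

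Next I compute the Hessian $B$ of $f$. As in \eqref{B1}, the Riemannian-submersion property forces $B\equiv 0$ on $\mathcal{H}\times\mathcal{H}$. Since the leaves of $\mathcal{V}$ are totally geodesic great $7$-spheres lying in the fibres of $f$, the vector $\nabla^{\S^{15}}_{\zeta_i}\zeta_j$ is vertical, so $B(\zeta_i,\zeta_j)=-df(\nabla^{\S^{15}}_{\zeta_i}\zeta_j)=0$; and by symmetry $B(\zeta_i,v)=B(v,\zeta_i)=-df(\nabla^{\S^{15}}_v\zeta_i)=df(\varphi_i(v))$ for horizontal $v$. Plugging these into the two identities of Lemma \ref{Aform} — using that $df\,df^{t}=\Id$ on $T\mathbb{OP}^1$ and $df^{t}df$ is the orthogonal projection onto $\mathcal{H}$, both because $f$ is a Riemannian submersion — gives $A\equiv 0$ on $\mathcal{V}\times\mathcal{V}$ and on $\mathcal{H}\times\mathcal{H}$, which is statement (a), together with the mixed blocks $A(\zeta_i,v)=\tfrac12\big(-\varphi_i(v)\oplus df(\varphi_i(v))\big)$, analogous to \eqref{AC1} and \eqref{AC2}. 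In particular $\operatorname{tr}_{\gind}B=0$, so $\varGamma(f)$ is minimal. Finally, taking a $\gind$-orthonormal frame $\{\zeta_1,\dots,\zeta_7,e_1,\dots,e_8\}$ with $e_1,\dots,e_8$ spanning $\mathcal{H}$, the only nonzero components of $A$ are the $7\cdot 8$ mixed ones; since $\varphi_i|_{\mathcal{H}}$ is a $\gind_{\S^{15}}$-isometry onto $\mathcal{H}$ and $f$ is a Riemannian submersion, $|A(\zeta_i,e_j)|^2=\tfrac12|e_j|^2_{\gind_{\S^{15}}}=\tfrac14$, so $|A|^2=2\sum_{i=1}^{7}\sum_{j=1}^{8}|A(\zeta_i,e_j)|^2=28$, which is statement (b).

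The main obstacle is precisely the step flagged in the first paragraph: showing that $\varphi_i$ restricts to an isometry of the horizontal bundle $\mathcal{H}$. In the complex case this was free from $df\circ\varphi=J_{\CP^n}\circ df$, and in the quaternionic case from the $3$-Sasakian relations \eqref{Eq: equations3contact}; but $\mathbb{OP}^1=\S^8$ carries no almost complex structure and, since $\mathbb{O}$ is non-associative, the $\varphi_i$ obey no analogous algebraic identities, so this normally routine point becomes the heart of the argument. I would handle it either by a direct computation from the octonion multiplication table, using only that the $J_i$ are orthogonal and pairwise anticommuting, or — since everything reduces to a pointwise statement and the isometry group of $\S^{15}$ acts transitively while preserving the Hopf fibration — by checking it at the single base point $p=({\bf i}_0,0)\in\mathbb{O}^2$, where $\mathcal{V}_p=\operatorname{Im}\mathbb{O}\times\{0\}$, $\mathcal{H}_p=\{0\}\times\mathbb{O}$, and $\varphi_i(0,b)=(0,{\bf i}_ib)$ visibly preserves $\mathcal{H}_p$ and is an isometry on it. Everything else is formally identical to the two cases already treated.
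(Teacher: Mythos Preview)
Your outline is essentially identical to the paper's: both argue that $\varphi_i|_{\mathcal H}$ is an isometry for $\gind_{\S^{15}}$ and $\gind$, that $B$ vanishes on $\mathcal V\times\mathcal V$ and on $\mathcal H\times\mathcal H$ while $B(\zeta_i,v)=df(\varphi_i(v))$ on the mixed blocks, and then read off $A$ from Lemma~\ref{Aform} and sum to $|A|^2=28$. The paper simply asserts the isometry claim ``as in the quaternionic case''; you, to your credit, isolate it as the crux of the octonionic argument.

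However, neither of your two proposed routes to this step works. The transitivity argument fails because, although $\mathrm{Spin}(9)$ acts transitively on $\S^{15}$ preserving the Hopf fibration, a generic element does \emph{not} commute with the individual left-multiplication operators $J_i$; hence it does not carry the tensor $\varphi_i$ at your base point to $\varphi_i$ at another point, and a single-point check proves nothing globally. The direct computation fails for the same underlying reason --- non-associativity --- and in fact the claim $\varphi_i(\mathcal H)\subset\mathcal H$ is simply false away from special points. At $p=(1,{\bf i}_4)/\sqrt{2}$ one finds $\mathcal H_p=\{(z{\bf i}_4,z):z\in\mathbb O\}$; the vector $v=({\bf i}_6,{\bf i}_2)=({\bf i}_2{\bf i}_4,{\bf i}_2)$ is horizontal, yet $\varphi_1(v)=J_1v=({\bf i}_1{\bf i}_6,{\bf i}_1{\bf i}_2)=(-{\bf i}_7,{\bf i}_3)=\sqrt{2}\,\zeta_7$ is purely vertical. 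Your instinct that this step is the heart of the matter is exactly right, but the argument needs a genuinely different idea here --- and the paper's own proof does not supply one either.
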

\begin{proof}
As in the quaternionic case, we can show that the spaces $\mathcal{V}$ and $\mathcal{H}$ are again perpendicular with respect to the graphical
metric $\gind$. Moreover, the restriction of each $\varphi_i$, $i\in\{1,2,\ldots,7\}$, on $\mathcal{H}$ is an isometry with respect to both metrics
$\gind_{\S^{15}}$ and $\gind$. Since $f$ is Riemannian submersion with totally geodesic
fibers, we get
$$
B(\zeta_i,\zeta_j)=0\quad\text{and}\quad B(\zeta_i,v)=df(\varphi_i(v))
$$
for any $i,j\in\{1,2,\ldots,7\}$ and $v\in\mathcal{H}$. According to Lemma \ref{Aform}, we obtain that $A$ vanishes
on $\mathcal{V}$ and $\mathcal{H}$ and that
\begin{equation*}
A(\zeta_i,v)=\frac{-\varphi_i(v)\oplus df(\varphi_i(v))}{2}
\end{equation*}
for any $i\in\{1,2,\ldots,7\}$ and $v\in\mathcal{H}$. Consider an orthonormal, with respect to
the graphical metric $\gind$, frame of the form
$\{\zeta_i,e_{1},e_{2},\dots,e_{8}\}_{i\in\{1,\dots,7\}}$
on $\S^{15}$, where $\{e_{1},\dots,e_8\}\in\mathcal{H}$. Then, we obtain that $|A|^2=28$.
\end{proof}
\section{Equivariant minimal maps}\label{equiv}
In this section, we describe another interesting class of maps from $\S^3$ to $\S^2$, generalizing Hopf's fibration from $\S^3$ into $\S^2$. Let us regard $\S^3$ as a subset of $\C\times\C$, i.e.,
$$
\S^3=\{(z_1,z_2)\in\C\times\C:|z_1|^2+|z_2|^2=1\}
$$
and $\S^2$ as a subset of $\C\times\real{}$, i.e.,
$$
\S^2=\{(z,\tau)\in\C\times\real{}:|z|^2+\tau^2=1\}.
$$
Recall that the standard action of $\S^1\times\S^1$ on $\S^3$ is given by
$$
(e^{i\theta_1},e^{i\theta_2})\cdot(z_1,z_2)=(e^{i\theta_1} \cdot z_1,e^{i\theta_2}\cdot z_2)
$$
and the standard action of $\S^1$ on $\S^2$ is given by
$$
e^{i\theta}\cdot(z,\tau)=(e^{i\theta}\cdot z,\tau).
$$
Define now the multiplication $\varrho_{kl}:\S^1\times \S^1\to\S^1$, $(k,l)\in\mathbb{Z}\times\mathbb{Z}$, given by
$$
\varrho_{kl}(e^{i\theta_1},e^{i\theta_2})=e^{i(k\theta_1+l\theta_2)}.
$$
A map $f:\S^3\to\S^2$ is called {\em equivariant with respect to $\varrho_{kl}$} if
$$
f\big((e^{i\theta},e^{i\theta_2})(z_1,z_2)\big)=\varrho_{kl}(e^{i\theta_1},e^{i\theta_2})f(z_1,z_2).
$$
Let see now how we can describe such equivariant maps. To this end,
we parametrize points in the sphere
$\S^3$ by $(\sin s\cdot e^{i\xi},\cos s\cdot e^{i\eta})$,
where $(\xi,\eta)\in(0,2\pi)\times(0,2\pi)$ and $s\in[0,\pi/2]$, and points in $\S^2$ by $(\sin a\cdot e^{i\sigma},\cos a)$,
where $\sigma\in(0,2\pi)$ and $a\in(0,\pi)$. Then, one can easily see that such an
equivariant map $f:\S^3\to\S^2$ can be represented in the form
\begin{equation}\label{Tequiv}
f_{kl}(\xi,\eta,s)=f(\sin s\cdot e^{i\xi},\cos s\cdot e^{i\eta})=(\sin a(s)\cdot e^{i(k\xi+l\eta)},\cos a(s)),
\end{equation}
where $a: [0,{\pi}/{2}] \mapsto [0, \pi]$ is a function satisfying the boundary conditions
\begin{equation*}
a(0)=0\quad\text{and}\quad a(\pi/2)=\pi.
\end{equation*}
The function $a$ is called {\em generating function}. Maps of the form \eqref{Tequiv} are also called
{\em $a$-Hopf constructions}; see for instance \cite[Chapter  X]{eells2}.
Note that in the case $a(s)=2s$, up to a dilation of $\S^2$, we obtain the Hopf fibration.

\begin{lemma}\label{singinv}
Let $f_{kl}:\S^3\to\S^2$ be an $a$-Hopf construction. Then, the singular values of $df_{kl}$
at a point $(\sin s\cdot e^{i\xi},\cos s\cdot e^{i\eta})$ are
$$\lambda_1=0,\quad \lambda_2=\sqrt{\tfrac{k^2{\sin}^2a}{{\sin}^{2}s}+\tfrac{l^2{\sin}^2a}{{\cos}^{2}s}} \quad\text{and}\quad \lambda_3=|a_s|.$$
In particular, the map $f_{kl}$ is weakly conformal if and only if
\begin{equation*}
a(s)=\left\{
\begin{array}{ll}
2\arctan(c\tan^ks), & \text{if}\,\,l=k,\\
\\
2\arctan\left(c\frac{\big(l\cosec\hspace{-1pt} s-\sqrt{l^2\cot^2\hspace{-2pt}s+k^2}\big)^l}
{\big(\sqrt{k^2{\tan}^2s+l^2}-k{\sec} s\big)^k}\right), &\text{if}\,\, l>k,
\end{array}
\right.
\end{equation*}
where $c$ is a positive constant and $s\in[0,\pi/2]$.
\end{lemma}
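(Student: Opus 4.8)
\emph{Strategy.} The plan is to read the singular values off the coordinate formulas and then reduce weak conformality to a separable first-order ODE for the generating function $a$, which I then integrate explicitly. First I would write down the two round metrics in the given charts: differentiating the position vectors (and checking mutual orthogonality of the coordinate fields) gives
$$\gind_{\S^3}=ds^2+\sin^2 s\,d\xi^2+\cos^2 s\,d\eta^2,\qquad \gind_{\S^2}=da^2+\sin^2 a\,d\sigma^2 .$$
By \eqref{Tequiv} the map $f_{kl}$ is the coordinate map $(\xi,\eta,s)\mapsto(\sigma,a)=(k\xi+l\eta,a(s))$, so $f_{kl}^{\ast}\gind_{\S^2}=a_s^{2}\,ds^2+\sin^2 a\,(k\,d\xi+l\,d\eta)^2$. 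In the $\gind_{\S^3}$-orthonormal frame $\{\partial_s,\ \partial_\xi/\sin s,\ \partial_\eta/\cos s\}$ the self-adjoint endomorphism $\gind_{\S^3}^{-1}f_{kl}^{\ast}\gind_{\S^2}$ of $T\S^3$ is block diagonal: its $\partial_s$-entry is $a_s^2$, and the remaining $2\times2$ block on $(\ker df_{kl})^{\perp}$ has trace $\sin^2 a\,(k^2/\sin^2 s+l^2/\cos^2 s)$ and zero determinant, hence eigenvalues $0$ and $\sin^2 a\,(k^2/\sin^2 s+l^2/\cos^2 s)$. Taking square roots gives exactly $\lambda_1=0$, $\lambda_3=|a_s|$ and $\lambda_2=\sqrt{k^2\sin^2 a/\sin^2 s+l^2\sin^2 a/\cos^2 s}$; the labels $\lambda_2\le\lambda_3$ are only names, since which of the two nonzero values is larger varies with the point, and this is immaterial below.

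\emph{The ODE and its first integral.} As $(\ker df_{kl})^{\perp}$ is two-dimensional, $df_{kl}$ restricted to it is conformal precisely when the two nonzero singular values coincide, i.e. $a_s^{2}=\sin^2 a\,(k^2/\sin^2 s+l^2/\cos^2 s)$. Since $a$ runs from $0$ to $\pi$ we take the positive root; writing $\rho(s):=\sqrt{k^2\cos^2 s+l^2\sin^2 s}$ and using $\frac{d}{ds}\log\tan\frac a2=\frac{a_s}{\sin a}$, the condition becomes
$$\frac{d}{ds}\log\tan\frac{a(s)}{2}=\frac{\rho(s)}{\sin s\cos s}.$$
When $l=k$ we have $\rho\equiv|k|$ and, because $\frac{d}{ds}\log\tan s=\frac1{\sin s\cos s}$, integration gives $\tan(a/2)=c\,\tan^{|k|}s$, the first case (for $k>0$). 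When $l\ne k$ I would substitute $u=\sin^2 s$ and then $v=\rho$, so that $v^2=k^2+(l^2-k^2)u$; the integral becomes $(l^2-k^2)\!\int v^2\,dv/[(v^2-k^2)(l^2-v^2)]$, and the partial-fraction identity $v^2/[(v^2-k^2)(l^2-v^2)]=\frac{k^2/(l^2-k^2)}{v^2-k^2}+\frac{l^2/(l^2-k^2)}{l^2-v^2}$ turns it into elementary logarithms with antiderivative $\frac k2\log\frac{\rho-k}{\rho+k}+\frac l2\log\frac{l+\rho}{l-\rho}$. Exponentiating and using $\rho^2-k^2=(l^2-k^2)\sin^2 s$ and $l^2-\rho^2=(l^2-k^2)\cos^2 s$ to clear the radical ratios (multiply $\frac{\rho-k}{\rho+k}$ and $\frac{l+\rho}{l-\rho}$ by suitable conjugates) collapses $\tan(a/2)$ into a ratio of powers of $\sin s$, $\cos s$ and of $\rho$ shifted by $k$ or $l$; rewriting those radicals through $\cosec s,\cot s,\sec s,\tan s$ and absorbing a power of $l^2-k^2$ into the constant gives the displayed closed form.

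\emph{Endpoints, and the main difficulty.} Finally the boundary conditions $a(0)=0$, $a(\pi/2)=\pi$ are imposed exactly at the singular points of the ODE, so I would check by a local analysis that they hold for every solution of the family — which is why the honest answer is a genuine one-parameter family indexed by $c>0$. Near $s=0$, $\rho\to|k|$ and $\frac{d}{ds}\log\tan\frac a2\sim|k|/s$, so $\log\tan(a/2)\sim|k|\log s$ and $a\sim(\text{const})\,s^{|k|}\to0$; near $s=\pi/2$, $\rho\to|l|$ and $\frac{d}{ds}\log\tan\frac a2\sim|l|/\cos s$, so $\pi-a\sim(\text{const})\,(\pi/2-s)^{|l|}\to0$. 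Hence each such $a$ really is an $a$-Hopf construction, and conversely a weakly conformal $f_{kl}$ satisfies the ODE and so has $a$ in the family, which settles both implications. The step I expect to be the real obstacle is not the ODE itself but the final algebraic bookkeeping — tracking the integration constant and the signs concealed in the logarithms of radical ratios so as to land on precisely the stated closed form; the singular value computation and the endpoint asymptotics are routine once set up.
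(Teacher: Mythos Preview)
Your proposal is correct and follows essentially the same route as the paper: write the round metrics in the chart, form the pullback $f_{kl}^{\ast}\gind_{\S^2}$, diagonalize against the orthonormal frame $\{\partial_s,\partial_\xi/\sin s,\partial_\eta/\cos s\}$ to read off the singular values, then separate variables in the weak-conformality condition $a_s/\sin a=\rho(s)/(\sin s\cos s)$ and integrate. You actually supply more detail than the paper does (which for $l\neq k$ writes only ``Integrating, it follows that\dots'', whereas you carry out the substitution $v=\rho$ and the partial-fraction step explicitly, and also check the endpoint behaviour); one harmless slip is that the $2\times2$ block sits on $\operatorname{span}\{\partial_\xi/\sin s,\partial_\eta/\cos s\}$, which \emph{contains} the kernel direction of $df_{kl}$, rather than on $(\ker df_{kl})^{\perp}$ --- the zero eigenvalue of that block is precisely the kernel, and your conclusion is unaffected.
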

\begin{proof}
Note at first that with respect to the coordinate systems introduced above, the metrics of $\mathbb{S}^{3}$ and $\mathbb{S}^{2}$
 are given by the expressions
$$
\gind_{\mathbb{S}^{3}}={\sin}^{2}s\, d\xi^{2}+{\cos}^{2}s\, d\eta^{2}+d s^{2}\,\,\,\,\text{and}\,\,\,\,
\gind_{\mathbb{S}^{2}}={\sin}^{2}\tau\, d\sigma^{2}+d\tau^{2},
$$
respectively. One can easily verify that the vector fields
\begin{equation}\label{BS3}
v_{1}=\frac{\partial_\xi}{\sin s},\quad v_{2}=\frac{\partial_{\eta}}{\cos s}\quad\text{and}\quad v_{3}=\partial_s
\end{equation}
constitute an orthonormal basis of $T_{(\xi, \eta, s)}\mathbb{S}^{3}$. Similarly, the vector fields
\begin{equation}\label{BS2}
w_{1}=\frac{\partial_\sigma}{\sin \tau}\quad\text{and}\quad w_{2}=\partial_\tau
\end{equation}
constitute an orthonormal basis of $T_{(\sigma, \tau)}\mathbb{S}^{2}$.
Let us compute the differential of $f_{kl}$ now. We have,
\begin{eqnarray*}\label{Eq: differential varphi k, l}
df_{kl}(v_{1})&=&\frac{df_{kl}(\partial_\xi)}{\sin s}=\frac{k\partial_\sigma}{\sin s}=\frac{k\sin a}{\sin s}w_1,\\
df_{kl}(v_{2})&=&\frac{df_{kl}(\partial_\eta)}{\cos s}=\frac{l\partial_\sigma}{\cos s}=\frac{l\sin a}{\cos s}w_{1},
\end{eqnarray*}
and
\begin{equation}\label{122021154}
df_{kl}(v_{3})=df_{kl}(\partial_s)=a_s{\partial_\tau}=-a_s(-\partial_\tau).
\end{equation}
Diagonalizing $f_{kl}^*\gind_{\S^2}$ with respect to $\gind_{\S^3}$ we see that the squares of the singular values of the differential
$df_{kl}$ at an arbitrary point $\varPi(\xi,\eta,s)$ are
$$\lambda^2_1=0,\quad \lambda^2_2=\frac{k^2{\sin}^2a}{{\sin}^{2}s}+\frac{l^2{\sin}^2a}{{\cos}^{2}s} \quad\text{and}\quad\lambda^2_3=a^2_s$$
with eigendirections
\begin{equation}\label{SBS3}
\alpha_{1}=\frac{l (\sin s) v_{1}-k (\cos s) v_{2}}{\sqrt{l^{2}\sin^{2}s+k^{2}\cos^{2}s}},
\alpha_{2}=\frac{k (\cos s) v_{1}+l (\sin s) v_{2}}{\sqrt{l^{2}\sin^{2}s+k^{2}\cos^{2}s}},
\alpha_{3}=\partial_{s},
\end{equation}
respectively.

Taking into account that the values of $a$ are on $[0,\pi]$ that $a(0)=0$ and $a(\pi/2)=\pi$, we deduce that $f_{kl}$ is weakly conformal if and only if
$$a_s=\sin a\sqrt{\tfrac{k^2\cos^2s+l^2\sin^2s}{\cos^2 s\sin^2 s}}.$$
Integrating, it follows that $f_{kl}$ is weakly conformal if and only if
\begin{equation*}
a(s)=\left\{
\begin{array}{ll}
2\arctan(c\,{\tan}^ks), & \text{if}\,\,l=k,\\
\\
2\arctan\left(c\frac{\big(l\cosec\hspace{-1pt} s-\sqrt{l^2\cot^2\hspace{-2pt}s+k^2}\big)^l}
{\big(\sqrt{k^2\tan^2\hspace{-1pt}s+l^2}-k\sec\hspace{-1pt} s\big)^k}\right), &\text{if}\,\, l>k.
\end{array}
\right.
\end{equation*}
where $c$ is a positive constant and $s\in[0,\pi/2]$. This completes the proof.
\end{proof}

\begin{proposition}\label{secsing}
An $a$-Hopf construction $f_{kl}:\mathbb{S}^{3} \to \mathbb{S}^{2}$
is minimal if and only if the generating function $a:[0,\pi/2]\to[0,\pi]$ satisfies the equation
\begin{eqnarray}\label{Eq: ODE alpha}
0&=&\frac{a_{ss}}{1+a_s^{2}}
+\frac{\cos s\sin s\big({\cos}\, 2s+(l^{2}-k^{2})\sin^{2}a\big)}{\sin^{2}s\cos^{2}s+\sin^{2}a\big(l^{2}\sin^{2}s+k^{2}\cos^{2}s\big)}a_s\nonumber\\
&&\quad\quad\quad\quad\,-\frac{\sin a\cos a\big(k^{2}\cos^{2}s+l^{2}\sin^{2}s\big)}
{\sin^{2}s\cos^{2}s+\sin^{2}a\big(l^{2}\sin^{2}s+k^{2}\cos^{2}s\big)},
\end{eqnarray}
with boundary conditions $a(0)=0$ and $a(\pi/2)=\pi.$ Moreover, an $a$-Hopf construction $f_{kl}$
is weakly conformal if and only if $k^2=l^2$.
\end{proposition}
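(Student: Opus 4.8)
The plan is to apply the characterization of minimality in terms of harmonicity established above (the corollary following Lemma~\ref{Aform}): the graph $\varGamma(f_{kl})$ is minimal if and only if $\operatorname{tr}_{\gind}B=0$, where $B$ denotes the Hessian of $f_{kl}$ and $\gind=\gind_{\S^{3}}+f_{kl}^{*}\gind_{\S^{2}}$ is the graphical metric. Since $f_{kl}$ is linear in $(\xi,\eta)$ and depends on $s$ only through the generating function $a$, the quantity $\operatorname{tr}_{\gind}B$ will come out as a multiple of $\partial_\tau$ whose coefficient is precisely the left-hand side of \eqref{Eq: ODE alpha}.

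First I would work in the orthonormal frame of $\gind$ supplied by the singular value decomposition of Lemma~\ref{singinv}, namely $e_1=\alpha_1$, $e_2=\alpha_2/\sqrt{1+\lambda_2^{2}}$ and $e_3=\partial_s/\sqrt{1+a_s^{2}}$, where $\alpha_1=(l\,\partial_\xi-k\,\partial_\eta)/\rho$ spans $\ker df_{kl}$, $\alpha_2=(k\cot s\,\partial_\xi+l\tan s\,\partial_\eta)/\rho$, and $\rho^{2}=k^{2}\cos^{2}s+l^{2}\sin^{2}s$. Because the Hessian $B$ is tensorial in both arguments,
\[
\operatorname{tr}_{\gind}B \;=\; B(\alpha_1,\alpha_1)\;+\;\frac{B(\alpha_2,\alpha_2)}{1+\lambda_2^{2}}\;+\;\frac{B(\partial_s,\partial_s)}{1+a_s^{2}},
\]
so it suffices to compute the three symmetric evaluations $B(\alpha_1,\alpha_1)$, $B(\alpha_2,\alpha_2)$ and $B(\partial_s,\partial_s)$.

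Next I would carry these out using the Christoffel symbols of the round metrics $\gind_{\S^{3}}=\sin^{2}s\,d\xi^{2}+\cos^{2}s\,d\eta^{2}+ds^{2}$ and $\gind_{\S^{2}}=\sin^{2}\tau\,d\sigma^{2}+d\tau^{2}$, together with $\sigma\circ f_{kl}=k\xi+l\eta$ and $\tau\circ f_{kl}=a(s)$. Since the $s$-curves of $\S^{3}$ and the $\tau$-curves of $\S^{2}$ are geodesics, one gets $B(\partial_s,\partial_s)=a_{ss}\,\partial_\tau$, which already accounts for the first summand $a_{ss}/(1+a_s^{2})$. A short computation of $B(\partial_\xi,\partial_\xi)$, $B(\partial_\xi,\partial_\eta)$, $B(\partial_\eta,\partial_\eta)$ (each a multiple of $\partial_\tau$) and the expansion of $B(\alpha_1,\alpha_1)$ shows that the $\sin a\cos a$ terms cancel, leaving $B(\alpha_1,\alpha_1)=\frac{(l^{2}-k^{2})\sin s\cos s\,a_s}{\rho^{2}}\,\partial_\tau$; the analogous longer expansion gives $B(\alpha_2,\alpha_2)$ as a $\partial_\tau$-multiple containing one $a_s$-term and one $\sin a\cos a$-term. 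Putting everything over the common denominator $D:=\sin^{2}s\cos^{2}s+\sin^{2}a\,(k^{2}\cos^{2}s+l^{2}\sin^{2}s)=\sin^{2}s\cos^{2}s\,(1+\lambda_2^{2})$ and invoking the elementary identity
\[
(l^{2}-k^{2})\sin^{2}s\cos^{2}s+k^{2}\cos^{4}s-l^{2}\sin^{4}s=(\cos^{2}s-\sin^{2}s)\,(k^{2}\cos^{2}s+l^{2}\sin^{2}s),
\]
the three contributions assemble exactly into the bracket of \eqref{Eq: ODE alpha}, which proves the equivalence. The boundary conditions $a(0)=0$, $a(\pi/2)=\pi$ are not an additional hypothesis: they are inherited from the definition of the $a$-Hopf construction, being required for $f_{kl}$ to extend smoothly across the circles $s=0$ and $s=\pi/2$ where the coordinates degenerate. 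I expect the only real obstacle to be this bookkeeping — in particular pinning down the precise coefficient $\cos 2s+(l^{2}-k^{2})\sin^{2}a$; there is no conceptual difficulty.

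For the final assertion, recall from Lemma~\ref{singinv} that $f_{kl}$ is weakly conformal exactly when $\lambda_2=\lambda_3$, i.e. $a_s^{2}\sin^{2}s\cos^{2}s=\sin^{2}a\,(k^{2}\cos^{2}s+l^{2}\sin^{2}s)$. If $f_{kl}$ is simultaneously minimal and weakly conformal, this relation forces $D=\sin^{2}s\cos^{2}s\,(1+a_s^{2})$; differentiating it in $s$ to eliminate $a_{ss}$ and substituting into \eqref{Eq: ODE alpha}, all terms cancel except a multiple of $(l^{2}-k^{2})\sin^{2}a\,(1+a_s^{2})$, which must therefore vanish, so $k^{2}=l^{2}$. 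Conversely, when $k^{2}=l^{2}$ a direct substitution shows that the weakly conformal generating function of Lemma~\ref{singinv} (case $l=k$) satisfies \eqref{Eq: ODE alpha}, so that minimal weakly conformal $a$-Hopf constructions occur precisely in that case.
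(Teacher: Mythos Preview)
Your proposal is correct and follows essentially the same route as the paper. Both compute the minimality condition in the singular-value frame $\{\alpha_1,\alpha_2,\partial_s\}$; you do it through $\operatorname{tr}_{\gind}B=0$ (the corollary after Lemma~\ref{Aform}), while the paper carries out the equivalent but longer computation of all the components $h_{ij}^{\alpha}$ of $A$ and then reads off $h_{11}^{5}+h_{22}^{5}+h_{33}^{5}=0$. Your use of the identity $(l^{2}-k^{2})\sin^{2}s\cos^{2}s+k^{2}\cos^{4}s-l^{2}\sin^{4}s=\cos 2s\,\rho^{2}$ is exactly what makes the $a_s$-coefficient come out as stated, and the paper does the same regrouping implicitly. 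For the weak-conformality claim your substitution argument coincides with the paper's; the residual factor after cancellation is actually proportional to $(l^{2}-k^{2})\sin a\,(1+a_s^{2})$ (one power of $\sin a$, not two), but this does not affect the conclusion. The only thing the paper's longer calculation buys is the explicit list of $h_{ij}^{\alpha}$, which is reused in the proof of Theorem~\ref{thmb}.
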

\begin{proof} Let us consider the frames $\{v_1,v_2,v_3\}$, $\{\alpha_1,\alpha_2,\alpha_3\}$ and $\{w_1,w_2\}$ as in \eqref{BS3} and
\eqref{SBS3} and \eqref{BS2},
respectively. Note that
\begin{equation*}
[v_{1}, v_{2}]=0,\,\,[v_{1}, v_{3}]=(\cot s)v_{1},\,\,[v_2,v_3]=-(\tan s) v_{2},
\end{equation*}
and
$$
[w_1,w_2]=(\cot \tau) w_{1}.
$$
From the Koszul formula and the above Lie brackets, we easily get that
\begin{equation*}\label{Eq: covariant derivative S3}
\begin{array}{lll}
\nabla^{\mathbb{S}^{3}}_{v_{1}}{v_{1}}=-(\cot s) v_{3}, &\nabla^{\mathbb{S}^{3}}_{v_{1}}{v_{2}}=0, &\nabla^{\mathbb{S}^{3}}_{v_{1}}{v_{3}}= (\cot s) v_{1},\\
\nabla^{\mathbb{S}^{3}}_{v_{2}}{v_{1}}=0, & \nabla^{\mathbb{S}^{3}}_{v_{2}}{v_{2}}=(\tan s) v_{3}, & \nabla^{\mathbb{S}^{3}}_{v_{2}}{v_{3}}=-(\tan s) v_{2},\\
\nabla^{\mathbb{S}^{3}}_{v_{3}}{v_{1}}=0, & \nabla^{\mathbb{S}^{3}}_{v_{3}}{v_{2}}=0, & \nabla^{\mathbb{S}^{3}}_{v_{3}}{v_{3}}=0,
\end{array}
\end{equation*}
and
\begin{equation*}\label{Eq: covariant derivative S2}
\nabla^{\mathbb{S}^{2}}_{w_{1}}{w_{1}}=-(\cot\tau) w_{2},\,  \nabla^{\mathbb{S}^{2}}_{w_{1}}{w_{2}}=(\cot \tau) w_{1},\,\,
\nabla^{\mathbb{S}^{2}}_{w_{2}}{w_{1}}=0,\,\, \nabla^{\mathbb{S}^{2}}_{w_{2}}{w_{2}}=0.
\end{equation*}
By a straightforward computation, we obtain the connection forms of the frame $\{\alpha_1,\alpha_2,\alpha_3\}$ with
respect to the Levi-Civita connection of $\S^3$, namely
\begin{equation*}\label{Eq: connectioneigen3sphere}
\begin{array}{ll}
\hspace{-6pt}\nabla^{\mathbb{S}^{3}}_{\alpha_{1}}\alpha_{1}=\dfrac{(k^{2}-l^{2})\sin s\cos s}{l^{2}\sin^{2}s+k^{2}\cos^{2}s}\alpha_{3},
&\hspace{-9pt}\nabla^{\mathbb{S}^{3}}_{\alpha_{1}}\alpha_{2}=-\dfrac{kl}{l^{2}\sin^{2}s+k^{2}\cos^{2}s}\alpha_{3},
\\
\hspace{-6pt}\nabla^{\mathbb{S}^{3}}_{\alpha_{2}}\alpha_{1} =
-\dfrac{kl}{l^{2}\sin^{2}s+k^{2}\cos^{2}s}\alpha_{3},
&\hspace{-9pt}\nabla^{\mathbb{S}^{3}}_{\alpha_{2}}\alpha_{2}=\dfrac{l^{2}\sin^{4}s-k^{2}\cos^{4}s}{\sin s\cos s(l^{2}\sin^{2}s+k^{2}\cos^{2}s)}\alpha_{3},
\\
\hspace{-6pt}\nabla^{\mathbb{S}^{3}}_{\alpha_{3}}\alpha_{1}=\dfrac{kl}{k^{2}\cos^{2}s+l^{2}\sin^{2}s}\alpha_{2},
&\hspace{-9pt}\nabla^{\mathbb{S}^{3}}_{\alpha_{3}}\alpha_{2}=-\dfrac{kl}{k^{2}\cos^{2}s+l^{2}\sin^{2}s}\alpha_{1},
\end{array}
\end{equation*}
and
\begin{eqnarray*}
\nabla^{\mathbb{S}^{3}}_{\alpha_{1}}\alpha_{3}&=&\frac{(l^{2}-k^{2})\cos s\sin s}{l^{2}\sin^{2}s+k^{2}\cos^{2}s}\alpha_{1}+\frac{k l}{l^{2}\sin^{2}s+k^{2}\cos^{2}s}\alpha_{2},\\
\nabla^{\mathbb{S}^{3}}_{\alpha_{2}}\alpha_{3}&=&\frac{kl}{k^{2}\cos^{2}s+l^{2}\sin^{2}s}\alpha_{1}
+\frac{k^{2}\cos^{4}s-l^{2}\sin^{4}s}{\sin s \cos s(l^{2}\sin^{2}s+k^{2}\cos^{2}a)}\alpha_{2},\\
\nabla^{\mathbb{S}^{3}}_{\alpha_{3}}\alpha_{3}&=&0.
\end{eqnarray*}
Now let us compute the second fundamental form of $\varGamma(f_{kl})$. According to \eqref{122021154},
it suffices to compute at points where $a_s>0$. Recall that, at a fixed point of the graph,
the vectors
$$
e_1=\alpha_1,\,\,e_2=\frac{\alpha_2}{\sqrt{1+\lambda_2^2}},\,\,e_3=\frac{\alpha_3}{\sqrt{1+\lambda_3^2}}
$$
forms an orthonormal basis of $\varGamma(f_{kl})$ and
$$
\xi_4=\frac{-\lambda_2\alpha_2\oplus w_1}{\sqrt{1+\lambda^2_2}},\,\,
\xi_5=\frac{-\lambda_3\alpha_3\oplus w_2}{\sqrt{1+\lambda^2_3}}
$$
forms an orthonormal basis in the normal bundle of the graph. From
$$
h_{ij}^{\alpha}=\langle\nabla^{F}_{e_i}dF(e_j),\xi_{\alpha}\rangle_{\S^3\times\S^2}
=\langle\nabla^{\S^3}_{e_i}e_j\oplus\nabla^{f_{kl}}_{e_i}df_{kl}(e_j),
\xi_{\alpha}\rangle_{\S^3\times\S^2}
$$
we get that
\begin{eqnarray*}
&&\hspace{-25pt}h_{11}^{4}=h_{12}^{4}=h_{22}^{4}=h_{33}^{4}=0,\\
&&\hspace{-25pt}h_{13}^{4}=\frac{-kl\lambda_{2}}{(l^{2}\sin^{2}s+k^{2}\cos^{2}s)\sqrt{(1+\lambda_{2}^{2})
(1+\lambda_3^{2})}},\\
&&\hspace{-25pt}h_{23}^{4}=\frac{\hspace{-14pt}a_s\cos a\sin s\cos s(k^{2}\cos^{2}s+l^{2}\sin^{2}s)+\sin a(l^{2}\sin^{4}s-k^{2}\cos^{4}s)}
{\hspace{-12pt}\sqrt{k^2\cos^2s+l^2\sin^2s}\big(\cos^{2}s\sin^{2}s+\sin^{2}a(k^{2}\cos^{2}s+l^{2}\sin^{2}s)\big)
\sqrt{1+\lambda_3^{2}}}.
\end{eqnarray*}
Additionally,
\begin{eqnarray*}
h_{11}^{5}&=&\frac{a_s(l^{2}-k^{2})\cos s\sin s}{\sqrt{1+\lambda_3^{2}}(l^2\sin^2s+k^2\cos^2s)},
\\
h_{12}^{5}&=&
\frac{k l a_s}{\sqrt{(1+\lambda_{2}^{2})(1+a_s^{2})}(l^2\sin^2s+k^2\cos^2s)},\\
h_{13}^5&=&0,\,\,h_{23}^{5}=0, \,\, h_{33}^{5}=\frac{a_{ss}}{(1+a_s^{2})\sqrt{1+\lambda^2_3}},
\end{eqnarray*}
and
$$
h_{22}^{5}=\frac{a_s\sin s\cos s(k^{2}\cos^{4}s-l^{2}\sin^{4}s)-\sin a\cos a(k^{2}\cos^{2}s+l^{2}\sin^{2}s)^{2}}{\sqrt{1+\lambda_3^{2}}(l^{2}\sin^{2}s+k^{2}\cos^{2}s)(\cos^{2}\sin^{2}s+\sin^{2}a(k^{2}\cos^{2}s+l^{2}\sin^{2}s))}.\nonumber
$$
Since
$$h_{11}^4=h_{22}^4=h_{33}^4=0$$
it follows that $f_{kl}$ is a minimal map if and only if
\begin{eqnarray*}
0&=&\frac{a_{ss}}{1+a_s^{2}}
+\frac{\cos s\sin s\big({\cos}\, 2s+(l^{2}-k^{2})\sin^{2}a\big)}{\sin^{2}s\cos^{2}s+\sin^{2}a\big(l^{2}\sin^{2}s+k^{2}\cos^{2}s\big)}a_s\\
&&\quad\quad\quad\quad\,-\frac{\sin a\cos a\big(k^{2}\cos^{2}s+l^{2}\sin^{2}s\big)}
{\sin^{2}s\cos^{2}s+\sin^{2}a\big(l^{2}\sin^{2}s+k^{2}\cos^{2}s\big)}.
\end{eqnarray*}
Let $f_{kl}$ be a weakly conformal minimal map. Then the function $a$ satisfies
\begin{equation}\label{Eq: ODE conformality}
a_s=\sin\hspace{-1pt} a \sqrt{\frac{k^{2}}{\sin^{2} s}+\frac{l^{2}}{\cos^{2} s}}.
\end{equation}
Differentiating (\ref{Eq: ODE conformality}), we get
\begin{equation}\label{Eq: second derivative alpha}
a_{ss}=\sin a\cos a \frac{k^{2}\cos^{2}s+l^{2}\sin^{2}s}{\sin^{2}s\cos^{2}s}
+\sin a \frac{\frac{l^{2}\sin s}{\cos^{3}s}-\frac{k^{2}\cos s}{\sin^{3}s}}{\sqrt{\frac{k^{2}}{\sin^{2}s}
+\frac{l^{2}}{\cos^{2}s}}}.
\end{equation}
Substituting (\ref{Eq: ODE conformality}) and (\ref{Eq: second derivative alpha}) in (\ref{Eq: ODE alpha}), we get
\begin{eqnarray*}
0&=&(l^{2}-k^{2})\Big(\frac{1}{4}-\frac{\cos^{2}2s}{4}+(k^{2}\cos^{2}s+l^{2}\sin^{2}s)\sin^{2}a\Big)\sin a\\
&=&(l^{2}-k^{2})\Big(\frac{\sin^{2}2s}{4}+(k^{2}\cos^{2}s+l^{2}\sin^{2}s)\sin^{2}a\Big)\sin a.
\end{eqnarray*}
Hence a weakly conformal map $f_{kl}$ is minimal only if $l^2=k^2$.
\end{proof}

\section{Structure equations of maps from $\S^3$ to $\S^2$}\label{83928012021}
We consider now maps $f:\S^{3}\to\S^2$ between unit euclidean spheres. Assume that $U$ is an
open neighbourhood of $\S^3$ where the kernel $\mathcal{V}$ of $df$ is a line bundle. Hence, in $U$
two singular values of $f$ are non-zero. For simplicity, let us denote them by $0<\lambda\le\mu$.
Fix $x \in U$ and consider an orthonormal basis $\{\alpha_{1}, \alpha_{2}, \alpha_{3}\}$ of $T_x\S^3$ with
respect to the spherical metric and an orthonormal basis $\{\beta_{2}, \beta_{3}\}$ of $T_{f(x)}\S^2$ again with respect to the
spherical metric such that
\begin{equation*}
\begin{array}{ccc}
d f(\alpha_{1})=0, & d f(\alpha_{2})=\lambda \beta_{2}, & d f(\alpha_{3})=\mu \beta_{3}.
\end{array}
\end{equation*}
Then, the vectors
$$e_{1}=\alpha_{1},\,\, e_{2}=\frac{\alpha_{2}}{\sqrt{1+\lambda^{2}}},\,\, e_{3}=\frac{\alpha_{3}}{\sqrt{1+\mu^{2}}}$$
form an orthonormal basis of $T_{x}\S^3$ with respect to $\gind$.  Moreover, the vectors
$$\xi_{4}=\frac{-\lambda \alpha _{2} \oplus \beta_{2}}{\sqrt{1+\lambda^{2}}}\quad\text{and}\quad
\xi_{5}=\frac{-\mu \alpha_{3}\oplus \beta_{3}}{\sqrt{1+\mu^{2}}}$$
constitute an orthonormal basis of the normal space of the graphical submanifold $\varGamma(f)$ at $f(x)$.
In the following, we adopt the following  terminology
\begin{equation*}
b_{ij}^{\alpha}=\langle B(\alpha_{i}, \alpha_{j}), \beta_{\alpha-2}\rangle_{\S^2}
\quad\text{and}\quad h_{ij}^{\alpha}=\langle A(e_{i}, e_{j}), \xi_{\alpha}\rangle_{\S^3\times\S^2}
\end{equation*}
for all $i, j\in\{1, 2, 3\}$ and $\alpha\in\{4, 5\}$.

\subsection{Jacobians and angles}
There are several quantities that encode information about the geometry of the graph
$F:U\subset\S^3\to\S^3\times\S^2$.
The first one is the Jacobian of the projection into the first factor. Namely, let $\Omega_{\S^3}$ be the volume form of $\S^3$. We
can extend to a parallel form on $\S^3\times\S^2$ by pulling it back via the natural projection map
$\pi_{\S^3}:\S^3\times\S^2\to\S^3$.
That is, we define $\Omega_1=\pi^*_{\S^3}\Omega_{\S^3}.$
Then, the Jacobian of the projection from the graph into $\S^3$ is $u_1=*(F^*\Omega_1)$
where $*$ stands for the Hodge star operator with respect to $\gind$. In terms of the singular values of the map $f$
the function $u_1$ has the form
$$
u_1=\frac{1}{\sqrt{(1+\lambda^2)(1+\mu^2)}}.
$$
There is another important function that will play a crucial role in our analysis.
Let us denote by $\mathcal{H}$ the orthogonal, with respect to the spherical metric, complement of $\mathcal{V}$
in $TU$. Observe that $\mathcal{H}$ is orthogonal to $\mathcal{V}$ also with respect to the graphical metric.
Since $\mathcal{H}$ is a two-dimensional sub-bundle it posses a complex structure $J_{\mathcal{H}}$. Let
$\Omega_{\S^2}$ the volume form $\S^2$. One can readily check that
the $2$-form $F^*\Omega_2$ is non-zero only on $\mathcal{H}$. Consequently, it must be a multiple of the volume
form $\Omega_{\mathcal{H}}$ of $(\mathcal{H},\gind)$. This means that there exists a smooth function $u_2$ on $U$ such that
$$
F^*\Omega_2=u_2\Omega_{\mathcal{H}}.
$$
Without loss of generality, we may assume that $u_2$ is positive on $U$. Then, in terms of the singular values of $f$ we have
$$
u_2=\frac{\lambda\mu}{\sqrt{(1+\lambda^2)(1+\mu^2)}}.
$$
Note that the function $w=u_1+u_2$ is less or equal than $1$. In particular, if at some point $x_0\in U$ it holds
$w(x_0)=1$ then $\lambda(x_0)=\mu(x_0)$. Hence, $w$ measures how much $f$ deviates from being
weakly conformal.
\subsection{The Gauss map of a submersion}
Following Baird \cite{baird}, the Gauss map $\mathcal{G}:(\S^{3},\gind) \mapsto \mathbb{G}_{1}(\S^{3})$ of the submersion
$f:(\S^{3},\gind) \mapsto \S^{2}$ associates to each point $x \in \S^{3}$ the line spanned by $e_{1}$, where $\mathbb{G}_{1}(\S^{3})$ 
denotes the Grassmann bundle over $\S^3$. Consider the tensor $\varphi: T\S^3 \mapsto \mathcal{H}\subset T\S^3$,
given by
$$\varphi(v)=-\nabla^{\gind}_{v}e_{1},$$
for all $v\in T\S^3$. The tensor $\varphi$ is (minus) the differential of the Gauss map $\mathcal{G}$ and
describes how the complex line bundle $\mathcal{H}$ is twisted within $T\S^3$. Fix now an oriented orthonormal frame $\{e_1,e_2,e_3=J_{\mathcal{H}}e_2\}$ of the singular value decomposition of
$df$. Then, with respect to this frame, we may introduce the tensors $\operatorname{Re}\varphi,\operatorname{Im}\varphi:T\S^3\to\mathcal{H}$ given by
$$
(\operatorname{Re}\varphi) v=\gind(\varphi (v),e_2)e_2 \quad\text{and}\quad (\operatorname{Im}\varphi) v=\gind(\varphi (v),J_{\mathcal{H}}e_2)J_{\mathcal{H}}e_2
$$
for any $v\in T\S^3$. Note that, at points where $\lambda\neq\mu$ the orthonormal frame $\{e_1,e_2,e_3\}$ is unique.

\subsection{Local formulas}
We will see now how all the notions defined above are related together.
\begin{lemma}\label{7.1}
Let $f:U\subset\S^3 \mapsto \S^2$ be a submersion with non-zero singular values $\lambda$ and $\mu$. Then,
on the open and dense subset of $U$ where $\lambda$ and $\mu$ are smooth, we have
\begin{equation*}
\begin{array}{lll}
b_{11}^{4}=\sqrt{1+\lambda^{2}}\,h_{11}^{4}, & b_{12}^{4}=\alpha_{1}(\lambda), & b_{13}^{4}=\sqrt{(1+\lambda^2)(1+\mu^2)}\,h_{13}^{4}, \\
b_{22}^{4}=\alpha_{2}(\lambda), & b_{23}^{4}=\alpha_{3}(\lambda), & b_{33}^{4}=(1+\mu^{2})\sqrt{1+\lambda^{2}}\,h_{33}^{4},
\end{array}
\end{equation*}
\begin{equation*}
\begin{array}{lll}
b_{11}^{5}=\sqrt{1+\mu^{2}}\, h_{11}^{5}, &b_{13}^{5}=\alpha_{1}(\mu), &  b_{12}^{5}=\sqrt{(1+\lambda^2)(1+\mu^2)}\,h_{12}^{5}, \\
b_{33}^{5}=\alpha_{3}(\mu), & b_{23}^{5}=\alpha_{2}(\mu), & b_{22}^{5}=(1+\lambda^{2})\sqrt{1+\mu^{2}}\,h_{22}^{5},
\end{array}
\end{equation*}
and
\begin{equation}\label{Eq: components h}
\begin{array}{lll}
h_{12}^{4}=e_{1}(\arctan \lambda), & h_{22}^{4}=e_{2}(\arctan \lambda), & h_{23}^{4}=e_{3}(\arctan \lambda),\\
h_{13}^{5}=e_{1}(\arctan \mu), & h_{23}^{5}=e_{2}(\arctan \mu), & h_{33}^{5}=e_{3}(\arctan \mu),
\end{array}
\end{equation}
where all indices are with respect to the orthonormal frames of the singular value decomposition.
\end{lemma}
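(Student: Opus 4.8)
The plan is to derive every formula from Lemma \ref{Aform} together with two elementary facts about the Hessian $B$ of $f$: it is symmetric and it is $C^{\infty}(\S^3)$-bilinear. I would work on the open dense subset of $U$ where $\lambda$ and $\mu$ are smooth; there the eigendistributions of $f^{*}\gind_{\S^2}$ are smooth, so about any point the vectors $\{\alpha_1,\alpha_2,\alpha_3\}$ of the singular value decomposition may be chosen to form a local $\gind_{\S^3}$-orthonormal frame field, and then $\beta_2:=df(\alpha_2)/\lambda$ and $\beta_3:=df(\alpha_3)/\mu$ form a local $\gind_{\S^2}$-orthonormal frame field along $f$. In this frame $df^{t}df$ has eigenvalues $0,\lambda^{2},\mu^{2}$ on $\alpha_1,\alpha_2,\alpha_3$, the endomorphism $df\,df^{t}$ has eigenvalues $\lambda^{2},\mu^{2}$ on $\beta_2,\beta_3$, and $df^{t}\beta_2=\lambda\alpha_2$, $df^{t}\beta_3=\mu\alpha_3$.

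The first step is to relate the $b_{ij}^{\alpha}$ to the $h_{ij}^{\alpha}$. By tensoriality and $e_i=\alpha_i/\sqrt{1+\lambda_i^{2}}$ (writing $\lambda_1=0$, $\lambda_2=\lambda$, $\lambda_3=\mu$) one has $B(e_i,e_j)=\big((1+\lambda_i^{2})(1+\lambda_j^{2})\big)^{-1/2}\big(b_{ij}^{4}\beta_2+b_{ij}^{5}\beta_3\big)$. Substituting this into the second expression for $A$ in Lemma \ref{Aform} and using the diagonalizations above, the $T\S^3$-component of $A(e_i,e_j)$ is an explicit multiple of $b_{ij}^{4}\alpha_2+b_{ij}^{5}\alpha_3$ and its $T\S^2$-component an explicit multiple of $b_{ij}^{4}\beta_2+b_{ij}^{5}\beta_3$; pairing with $\xi_4$ and $\xi_5$, which involve only $\alpha_2,\beta_2$ and only $\alpha_3,\beta_3$ respectively, and simplifying the powers of $1+\lambda^{2}$ and $1+\mu^{2}$, yields the single pair of identities
\[
b_{ij}^{4}=\sqrt{(1+\lambda^{2})(1+\lambda_i^{2})(1+\lambda_j^{2})}\;h_{ij}^{4},
\qquad
b_{ij}^{5}=\sqrt{(1+\mu^{2})(1+\lambda_i^{2})(1+\lambda_j^{2})}\;h_{ij}^{5}
\]
for all $i,j$. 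Restricting $(i,j)$ to indices in $\{1,3\}$ in the first identity and in $\{1,2\}$ in the second gives exactly the six $b$-$h$ formulas stated in the lemma.

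The remaining six $b$-components follow directly from $B(\alpha_i,\alpha_a)=\nabla^{f}_{\alpha_i}(df(\alpha_a))-df(\nabla^{\S^3}_{\alpha_i}\alpha_a)$. With $df(\alpha_2)=\lambda\beta_2$, the $\beta_2$-part of $\nabla^{f}_{\alpha_i}(\lambda\beta_2)$ equals $\alpha_i(\lambda)$ because $\langle\nabla^{f}_{\alpha_i}\beta_2,\beta_2\rangle=\tfrac12\alpha_i\langle\beta_2,\beta_2\rangle=0$, while $\langle df(\nabla^{\S^3}_{\alpha_i}\alpha_2),\beta_2\rangle=\lambda\langle\nabla^{\S^3}_{\alpha_i}\alpha_2,\alpha_2\rangle=0$ since $\langle\alpha_2,\alpha_2\rangle\equiv 1$; hence $b_{i2}^{4}=\alpha_i(\lambda)$, and symmetrically (using $B(\alpha_2,\alpha_3)=B(\alpha_3,\alpha_2)$ and $df(\alpha_3)=\mu\beta_3$) $b_{i3}^{5}=\alpha_i(\mu)$, for $i\in\{1,2,3\}$; these are the entries $b_{12}^{4},b_{22}^{4},b_{23}^{4}$ and $b_{13}^{5},b_{23}^{5},b_{33}^{5}$. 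Feeding them back into the first step gives $h_{i2}^{4}=\frac{b_{i2}^{4}}{(1+\lambda^{2})\sqrt{1+\lambda_i^{2}}}=\frac{1}{\sqrt{1+\lambda_i^{2}}}\cdot\frac{\alpha_i(\lambda)}{1+\lambda^{2}}=e_i(\arctan\lambda)$, using $e_i=\alpha_i/\sqrt{1+\lambda_i^{2}}$ and $d(\arctan\lambda)=d\lambda/(1+\lambda^{2})$, and likewise $h_{i3}^{5}=e_i(\arctan\mu)$; letting $i$ run through $\{1,2,3\}$ produces \eqref{Eq: components h}. I expect the only friction to be bookkeeping---identifying $df^{t}$ correctly on the singular value basis and carrying the normalizations $\sqrt{1+\lambda^{2}}$, $\sqrt{1+\mu^{2}}$ through Lemma \ref{Aform}---rather than any conceptual obstacle; it is worth noting at the outset, though, that all these identities are frame-dependent and are asserted only on the open dense set where the singular values, hence a smooth adapted frame, are available.
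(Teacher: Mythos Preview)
Your proof is correct and follows essentially the same route as the paper: both derive the $b$--$h$ relations from Lemma~\ref{Aform} and obtain the derivative identities $b_{i2}^{4}=\alpha_i(\lambda)$, $b_{i3}^{5}=\alpha_i(\mu)$ from the definition of $B$ together with the orthonormality of the singular-value frames. Your presentation is in fact somewhat cleaner than the paper's: you first establish the single unified identity $b_{ij}^{\alpha}=\sqrt{(1+\lambda_{\alpha-2}^{2})(1+\lambda_i^{2})(1+\lambda_j^{2})}\,h_{ij}^{\alpha}$ and then read off every case at once, whereas the paper computes the derivative identities first (by differentiating $\langle df(\alpha_2),df(\alpha_2)\rangle=\lambda^2$, etc.) and then appeals to Lemma~\ref{Aform} without writing out the general relation.
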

\begin{proof}
Differentiating with respect to $\alpha_1$ the identity
$$\langle d f(\alpha_{2}), d f(\alpha_{2})\rangle_{\S^2}=\lambda^{2},$$
and making use that $\alpha_1$ spans the kernel of $df$, we get
\begin{eqnarray*}\label{Eq: B12 4}
\lambda \alpha_{1}(\lambda)&=&\langle B(\alpha_{1}, \alpha_{2})+df(\nabla_{\alpha_{1}}^{\S^3}\alpha_{2}), d f(\alpha_{2})\rangle_{\S^2}\\
&=&\langle B(\alpha_{1}, \alpha_{2}), d f(\alpha_{2})\rangle_{\S^2}+\langle \nabla_{\alpha_{1}}^{\S^3}\alpha_{2}, \alpha_{3}\rangle_{\S^3}
\langle d f(\alpha_{2}), df(\alpha_{3})\rangle_{\S^2}\\
&=&\lambda\, b^4_{12}.
\end{eqnarray*}
Therefore,
\begin{equation}\label{Eq: B12 4}
b^4_{12}=\alpha_1(\lambda).
\end{equation}
Similarly, differentiating with respect to $\alpha_{1}$, $\alpha_2$ and $\alpha_3$ the equations
$$\langle d f(\alpha_{2}), d f(\alpha_{2})\rangle_{\S^2}=\lambda^{2}\quad\text{and}\quad\langle d f(\alpha_{3}), d f(\alpha_{3})\rangle_{\S^2}=\mu^{2} $$
we obtain
\begin{equation}\label{Eq: B13 5}
b_{13}^{5}=\alpha_{1}(\mu),\,\,b^{4}_{22}=\alpha_2(\lambda),\,\, b_{23}^4=\alpha_3(\lambda),\,\,b_{23}^{5}=\alpha_{2}(\mu),\,\, b_{33}^{5}=\alpha_{3}(\mu).
\end{equation}
From Lemma \ref{Aform}, equations \eqref{Eq: B12 4} and \eqref{Eq: B13 5} we get the expression relating the second fundamental form
of the graph with the singular values and the Hessian of the map $f$.
This completes the proof.
\end{proof}
\begin{lemma}\label{7.2}
Let $f:U\subset\S^{3} \mapsto \S^{2}$ be a smooth map with non-zero distinct singular values $\lambda$ and $\mu$. Then,
\begin{eqnarray*}
\nabla_{\alpha_{1}}^{\S^3}\alpha_{1}&=&-\frac{\sqrt{1+\lambda^{2}}}{\lambda}h_{11}^{4}\alpha_{2}
-\frac{\sqrt{1+\mu^{2}}}{\mu}h_{11}^{5}\alpha_{3},\\
\nabla_{\alpha_{1}}^{\S^3}\alpha_{2}&=&\,\,\,\,\frac{\sqrt{1+\lambda^{2}}}{\lambda}h_{11}^{4}\alpha_{1}
-\frac{\mu h_{12}^{5}+\lambda h_{13}^{4}}{\big(\mu^2-\lambda^{2}\big)u_1}\alpha_{3},\\
\nabla_{\alpha_{1}}^{\S^3}\alpha_{3}&=&\,\,\,\,\frac{\sqrt{1+\mu^{2}}}{\mu}h_{11}^{5}\alpha_{1}+\frac{\mu h_{12}^{5}
+\lambda h_{13}^{4}}{\big(\mu^{2}-\lambda^{2}\big)u_1}\alpha_{2}.
\end{eqnarray*}
Moreover,
\begin{eqnarray*}
\nabla_{\alpha_{2}}^{\S^3}\alpha_{1}&=&-\alpha_{1}(\log\lambda)\alpha_{2}-
\frac{h_{12}^{5}}{\mu u_1}\alpha_{3}\\
\nabla_{\alpha_{2}}^{\S^3}\alpha_{2}&=&\,\,\,\,\alpha_{1}(\log\lambda)\alpha_{1}-
\frac{u_1\lambda\alpha_{3}(\lambda)+\mu\sqrt{1+\lambda^{2}}h_{22}^{5}}
{\big(\mu^2-\lambda^{2}\big)u_1}\alpha_{3},\\
\nabla_{\alpha_{2}}^{\S^3}\alpha_{3}&=&\,\,\,\,\frac{h_{12}^{5}}{\mu u_1}\alpha_{1}+
\frac{u_1\lambda\alpha_{3}(\lambda)+\mu\sqrt{1+\lambda^{2}}h_{22}^{5}}
{\big(\mu^{2}-\lambda^{2}\big)u_1}\alpha_{2}.
\end{eqnarray*}
Additionally,
\begin{eqnarray*}
\nabla_{\alpha_{3}}^{\S^3}\alpha_{1}&=&-\frac{h_{13}^{4}}{\lambda u_1}\alpha_{2}-\alpha_{1}(\log\mu)\alpha_{3}\\
\nabla_{\alpha_{3}}^{\S^3}\alpha_{2}&=&\,\,\,\,\frac{h_{13}^{4}}{\lambda u_1}\alpha_{1}
-\frac{u_1\mu \alpha_{2}(\mu)
+\lambda\sqrt{1+\mu^{2}}h_{33}^{4}}{\big(\mu^2-\lambda^{2}\big)u_1}\alpha_{3},\\
\nabla_{\alpha_{3}}^{\S^3}\alpha_{3}&=&\,\,\,\,\alpha_{1}(\log\mu)\alpha_{1}
+\frac{u_1\mu \alpha_{2}(\mu)+\lambda\sqrt{1+\mu^{2}}h^4_{33}}{\big(\mu^{2}-\lambda^{2}\big)u_1}\alpha_{2}.
\end{eqnarray*}
\end{lemma}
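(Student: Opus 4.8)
Proof proposal.

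The plan is to determine all the connection coefficients $\Gamma^{k}_{ij}:=\langle\nabla^{\S^{3}}_{\alpha_{i}}\alpha_{j},\alpha_{k}\rangle_{\S^{3}}$ of the orthonormal frame $\{\alpha_{1},\alpha_{2},\alpha_{3}\}$ and then expand $\nabla^{\S^{3}}_{\alpha_{i}}\alpha_{j}=\sum_{k}\Gamma^{k}_{ij}\alpha_{k}$. Because the singular values $\lambda<\mu$ are distinct and non-zero on $U$, the frames $\{\alpha_{1},\alpha_{2},\alpha_{3}\}$ and $\{\beta_{2},\beta_{3}\}$ are smooth and $\alpha_{1}$ is a smooth unit section of the line bundle $\mathcal{V}=\ker df$. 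Orthonormality of $\{\alpha_{i}\}$ forces $\Gamma^{k}_{ij}=-\Gamma^{j}_{ik}$, hence in particular $\Gamma^{j}_{ij}=0$; so for each $i$ only $\Gamma^{2}_{i1}$, $\Gamma^{3}_{i1}$ and $\Gamma^{3}_{i2}$ have to be computed, and every remaining entry is then forced (for instance $\Gamma^{1}_{i2}=-\Gamma^{2}_{i1}$ and $\Gamma^{2}_{i3}=-\Gamma^{3}_{i2}$). Assembling these reproduces the nine stated identities.

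For the coefficients along the kernel direction I would use that $df(\alpha_{1})$ vanishes identically as a section of $f^{*}T\S^{2}$, so by the definition of the Hessian $B(\alpha_{i},\alpha_{1})=\nabla^{f}_{\alpha_{i}}df(\alpha_{1})-df(\nabla^{\S^{3}}_{\alpha_{i}}\alpha_{1})=-df(\nabla^{\S^{3}}_{\alpha_{i}}\alpha_{1})=-\lambda\Gamma^{2}_{i1}\beta_{2}-\mu\Gamma^{3}_{i1}\beta_{3}$, using $df(\alpha_{2})=\lambda\beta_{2}$, $df(\alpha_{3})=\mu\beta_{3}$ and $\Gamma^{1}_{i1}=0$. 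Pairing with $\beta_{2}$ and $\beta_{3}$ gives $\Gamma^{2}_{i1}=-b^{4}_{i1}/\lambda$ and $\Gamma^{3}_{i1}=-b^{5}_{i1}/\mu$. Now Lemma \ref{7.1}, the symmetry $b^{\alpha}_{ij}=b^{\alpha}_{ji}$, and $u_{1}=\big((1+\lambda^{2})(1+\mu^{2})\big)^{-1/2}$ convert $b^{4}_{11},b^{4}_{12},b^{4}_{13}$ into $\sqrt{1+\lambda^{2}}\,h^{4}_{11}$, $\alpha_{1}(\lambda)$, $h^{4}_{13}/u_{1}$, and $b^{5}_{11},b^{5}_{12},b^{5}_{13}$ into $\sqrt{1+\mu^{2}}\,h^{5}_{11}$, $h^{5}_{12}/u_{1}$, $\alpha_{1}(\mu)$; substituting yields $\nabla^{\S^{3}}_{\alpha_{1}}\alpha_{1}$ and the $\alpha_{2}$- and $\alpha_{3}$-components of $\nabla^{\S^{3}}_{\alpha_{2}}\alpha_{1}$ and $\nabla^{\S^{3}}_{\alpha_{3}}\alpha_{1}$, while the $\alpha_{1}$-components of $\nabla^{\S^{3}}_{\alpha_{i}}\alpha_{2}$ and $\nabla^{\S^{3}}_{\alpha_{i}}\alpha_{3}$ follow by antisymmetry.

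The last coefficient $\Gamma^{3}_{i2}$ I would extract by differentiating the identity $\langle df(\alpha_{2}),df(\alpha_{3})\rangle_{\S^{2}}=0$ along $\alpha_{i}$. Writing $\nabla^{f}_{\alpha_{i}}df(\alpha_{2})=B(\alpha_{i},\alpha_{2})+df(\nabla^{\S^{3}}_{\alpha_{i}}\alpha_{2})$ and $\nabla^{f}_{\alpha_{i}}df(\alpha_{3})=B(\alpha_{i},\alpha_{3})+df(\nabla^{\S^{3}}_{\alpha_{i}}\alpha_{3})$, and using $df(\alpha_{2})=\lambda\beta_{2}$, $df(\alpha_{3})=\mu\beta_{3}$ together with $\Gamma^{2}_{i2}=\Gamma^{3}_{i3}=0$ and $\Gamma^{2}_{i3}=-\Gamma^{3}_{i2}$, the derivative collapses to $0=\mu\,b^{5}_{i2}+\lambda\,b^{4}_{i3}+(\mu^{2}-\lambda^{2})\Gamma^{3}_{i2}$. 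Since $\mu\neq\lambda$ on $U$ this solves for $\Gamma^{3}_{i2}$, and inserting the values of $b^{5}_{i2},b^{4}_{i3}$ from Lemma \ref{7.1} (namely $b^{5}_{12}=h^{5}_{12}/u_{1}$, $b^{5}_{22}=(1+\lambda^{2})\sqrt{1+\mu^{2}}\,h^{5}_{22}$, $b^{5}_{23}=\alpha_{2}(\mu)$ and $b^{4}_{13}=h^{4}_{13}/u_{1}$, $b^{4}_{23}=\alpha_{3}(\lambda)$, $b^{4}_{33}=(1+\mu^{2})\sqrt{1+\lambda^{2}}\,h^{4}_{33}$) produces the $\alpha_{3}$-components of $\nabla^{\S^{3}}_{\alpha_{i}}\alpha_{2}$ and, via $\Gamma^{2}_{i3}=-\Gamma^{3}_{i2}$, the $\alpha_{2}$-components of $\nabla^{\S^{3}}_{\alpha_{i}}\alpha_{3}$. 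Collecting all entries gives the nine formulas.

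There is no genuine analytic obstacle here: the whole argument is pointwise linear algebra plus first–order differentiation of the scalar identities $|df(\alpha_{2})|^{2}=\lambda^{2}$, $|df(\alpha_{3})|^{2}=\mu^{2}$ (already exploited in Lemma \ref{7.1}) and $\langle df(\alpha_{2}),df(\alpha_{3})\rangle=0$. The two points that require care are: restricting to the set where the singular value decomposition is smooth — which, since $\lambda$ and $\mu$ are assumed distinct, is all of $U$ and is exactly what makes $\mu^{2}-\lambda^{2}$ invertible so that $\Gamma^{3}_{i2}$ can be solved for; and the bookkeeping of the many conversions among the $b^{\alpha}_{ij}$, the $h^{\alpha}_{ij}$ and the derivatives of $\lambda$ and $\mu$ provided by Lemma \ref{7.1}, where sign and index slips are the main risk.
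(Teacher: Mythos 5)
Your proposal is correct and follows essentially the same route as the paper: the coefficients $\langle\nabla^{\S^3}_{\alpha_i}\alpha_1,\alpha_k\rangle$ are extracted from the identity $B(\alpha_i,\alpha_1)=-df(\nabla^{\S^3}_{\alpha_i}\alpha_1)$ together with the dictionary of Lemma \ref{7.1}, the coefficients $\langle\nabla^{\S^3}_{\alpha_i}\alpha_2,\alpha_3\rangle$ come from differentiating $\langle df(\alpha_2),df(\alpha_3)\rangle_{\S^2}=0$, and the rest follows by antisymmetry in the orthonormal frame. All the conversions you list check out against the stated formulas, so there is nothing to add.
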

\begin{proof}
By a straightforward computation, we have
\begin{eqnarray*}
h_{11}^{4}&=&\langle \nabla^F_{e_{1}}d F(e_{1}), \xi_{4}\rangle_{\S^3\times\S^2}
=\frac{1}{\sqrt{1+\lambda^2}}\langle\nabla^{\S^3}_{\alpha_1}\alpha_1\oplus 0,-\lambda\alpha_2\oplus\beta_2 \rangle_{\S^3\times\S^2}\\
&=&-\frac{\lambda}{\sqrt{1+\lambda^{2}}}\langle\nabla_{\alpha_{1}}^{\gm}\alpha_{1}, \alpha_{2}\rangle_{\S^3}.
\end{eqnarray*}
Similarly, we deduce
$$
h_{11}^{5}=-\frac{\mu}{\sqrt{1+\mu^{2}}}\langle\nabla_{\alpha_{1}}^{\S^3}\alpha_{1}, \alpha_{3}\rangle_{\S^3}.
$$
As a consequence, we get
\begin{equation}\label{Eq: nabla alpha1 alpha1}
\langle\nabla_{\alpha_{1}}^{\S^3}\alpha_{1}, \alpha_{2}\rangle_{\S^3}=-\frac{\sqrt{1+\lambda^{2}}}{\lambda}h_{11}^{4}\,\, \text{and}\,\,
 \langle\nabla_{\alpha_{1}}^{\S^3}\alpha_{1}, \alpha_{3}\rangle_{\S^3}=-\frac{\sqrt{1+\mu^{2}}}{\mu}h_{11}^{5}.
\end{equation}
Using (\ref{Eq: B12 4}) and (\ref{Eq: B13 5}), we have
\begin{eqnarray*}
\alpha_{1}(\lambda)&=&\langle B(\alpha_{1}, \alpha_{2}), \beta_{2}\rangle_{\S^2}\\
&=&\langle\nabla_{\alpha_{2}}^{f}d f(\alpha_{1})-d f(\nabla_{\alpha_{2}}^{\S^3}\alpha_{1}), \beta_{2}\rangle_{\S^2}
=-\langle d f(\nabla_{\alpha_{2}}^{\S^3}\alpha_{1}), \beta_{2}\rangle_{\S^2}\\
&=&-\lambda\langle\nabla_{\alpha_{2}}^{\S^3}\alpha_{1}, \alpha_{2}\rangle_{\S^2},
\end{eqnarray*}
and so
\begin{equation}\label{Eq: nabla alpha2 alpha1}
\langle\nabla_{\alpha_{2}}^{\S^3}\alpha_{1}, \alpha_{2}\rangle_{\S^3}=-\alpha_{1}(\log\lambda).
\end{equation}
Similarly, we get
\begin{equation}\label{Eq: nabla alpha3 alpha1}
\langle\nabla_{\alpha_{3}}^{\S^3}\alpha_{1}, \alpha_{3}\rangle_{\S^3}=-\alpha_{1}(\log\mu).
\end{equation}
Furthermore,
\begin{eqnarray}\label{Eq: nabla alpha32 alpha 1a}
h_{12}^{5}&=&\langle\nabla^F_{e_{2}}d F(e_{1}), \xi_{5}\rangle_{\S^3\times\S^2}\nonumber\\
&=&-\frac{1}{\sqrt{(1+\lambda^2)(1+\mu^2)}} \langle\nabla^{\S^3}_{\alpha_{2}}\alpha_{1}\oplus 0, -\mu \alpha_{3}\oplus \beta_{3}\rangle_{\S^3\times\S^2}
\nonumber\\
&=&-\frac{\mu}{\sqrt{(1+\lambda^2)(1+\mu^2)}} \langle\nabla_{\alpha_{2}}^{\S^3}\alpha_{1}, \alpha_{3}\rangle_{\S^3}
\end{eqnarray}
and in the same way we obtain
\begin{equation}\label{Eq: nabla alpha32 alpha 1b}
h_{13}^{4}=-\frac{\lambda}{\sqrt{(1+\lambda^2)(1+\mu^2)}} \langle\nabla_{\alpha_{3}}^{\S^3}\alpha_{1}, \alpha_{2}\rangle_{\S^3}.
\end{equation}
Differentiating with respect to $\alpha_{1}$ the identity
$$\langle d f(\alpha_{2}), d f(\alpha_{3})\rangle_{\S^2}=0,$$
we get
\begin{eqnarray*}
&&\hspace{-22pt}0=\langle\nabla_{\alpha_{1}}^{f}d f(\alpha_{2}), d f(\alpha_{3})\rangle_{\S^2}
+\langle d f(\alpha_{2}), \nabla_{\alpha_{1}}^{f}d f(\alpha_{3})\rangle_{\S^2}\\
&&\hspace{-14pt}=\langle B(\alpha_{1}, \alpha_{2})+d f(\nabla_{\alpha_{1}}^{\S^3}\alpha_{2}), d f(\alpha_{3})\rangle_{\S^2}
\hspace{-2pt}+\hspace{-3pt}\langle d f(\alpha_{2}), B(\alpha_{1}, \alpha_{3})+d f(\nabla_{\alpha_{1}}^{\S^3}\alpha_{3})\rangle_{\S^2}\\
&&\hspace{-14pt}=\mu b_{12}^{5}+\lambda b_{13}^{4}+(\mu^{2}-\lambda^{2})\langle\nabla^{\S^3}_{\alpha_{1}}\alpha_{2}, \alpha_{3}\rangle_{\S^3}.
\end{eqnarray*}
Using the expressions of Lemma \ref{7.1}, we obtain
\begin{equation}\label{Eq: connection 123}
\langle\nabla^{\S^3}_{\alpha_{1}}\alpha_{2}, \alpha_{3}\rangle_{\S^3}=-\sqrt{(1+\lambda^2)(1+\mu^2)}\frac{\mu h_{12}^{5}
+\lambda h_{13}^{4}}{\mu^{2}-\lambda^2}.
\end{equation}
Differentiating with respect to $\alpha_{2}$ and $\alpha_3$ the identity
$$\langle d f(\alpha_{2}), d f(\alpha_{3})\rangle_{\S^2}=0$$
and proceeding in the same way, we get
\begin{equation}\label{Eq: connection 223}
\langle\nabla^{\S^3}_{\alpha_{2}}\alpha_{2}, \alpha_{3}\rangle_{\S^3}
=\frac{\lambda \alpha_{3}(\lambda)+\mu(1+\lambda^{2})\sqrt{1+\mu^{2}}h_{22}^{5}}{\lambda^{2}-\mu^{2}}
\end{equation}
and
\begin{equation}\label{Eq: connection 332}
\langle\nabla^{\S^3}_{\alpha_{3}}\alpha_{3}, \alpha_{2}\rangle_{\S^3}=\frac{\mu \alpha_{2}(\mu)+\lambda(1+\mu^{2})\sqrt{1+\lambda^{2}}h_{33}^{4}}{\mu^{2}-\lambda^{2}}.
\end{equation}
Combining (\ref{Eq: nabla alpha1 alpha1}), (\ref{Eq: nabla alpha2 alpha1}), (\ref{Eq: nabla alpha3 alpha1}), (\ref{Eq: nabla alpha32 alpha 1a}),
\eqref{Eq: nabla alpha32 alpha 1b}, (\ref{Eq: connection 123}), (\ref{Eq: connection 223}) and (\ref{Eq: connection 332}), we get
the desired result.
\end{proof}
\begin{lemma}\label{7.3}
Let $f:U\subset\S^{3} \mapsto \S^{2}$ be a map with non-zero distinct singular values $\lambda$ and
$\mu$. Then, the normal connection of the graphical submanifold $\varGamma(f)$ is given by
\begin{eqnarray}
\nabla^{\perp}_{e_{1}}\xi_{4}&=&\frac{\lambda(1+\mu^{2})h_{12}^{5}+\mu(1+\lambda^{2})
h_{13}^{4}}{\lambda^{2}-\mu^{2}}\xi_{5}, \nonumber\\
\nabla^{\perp}_{e_{2}}\xi_{4}&=&\frac{\mu e_{3}(\lambda)+\lambda(1+\mu^{2})h_{22}^{5}}{\lambda^{2}-\mu^{2}}\xi_{5}, \nonumber\\
\nabla^{\perp}_{e_{3}}\xi_{4}&=&\frac{\lambda e_{2}(\mu)+\mu(1+\lambda^{2})h_{33}^{4}}{\lambda^{2}-\mu^{2}}\xi_{5},\nonumber
\end{eqnarray}
where all the frames are with respect to the singular value decomposition.
\end{lemma}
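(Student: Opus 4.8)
The plan is to compute the only component of $\nabla^{\perp}$ that can be nonzero and to match it against the claimed formula. Since the normal bundle of $\varGamma(f)$ is spanned by the orthonormal frame $\{\xi_4,\xi_5\}$ and $\xi_4$ has unit $\gind$-norm, one has $\nabla^{\perp}_{e_i}\xi_4=\langle\nabla^{F}_{e_i}\xi_4,\xi_5\rangle\,\xi_5$, so it suffices to evaluate the three scalars $\langle\nabla^{F}_{e_i}\xi_4,\xi_5\rangle$. Because connections are $C^{\infty}$-linear in the direction and $e_1=\alpha_1$, $e_2=\alpha_2/\sqrt{1+\lambda^{2}}$, $e_3=\alpha_3/\sqrt{1+\mu^{2}}$, I would carry out the computation in the directions $\alpha_i$ and rescale at the very end. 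Using the product splitting $\nabla^{F}=\nabla^{\S^3}\oplus\nabla^{\S^2}$ together with $\xi_4=(-\lambda\alpha_2\oplus\beta_2)/\sqrt{1+\lambda^{2}}$ and $\xi_5=(-\mu\alpha_3\oplus\beta_3)/\sqrt{1+\mu^{2}}$, differentiating $\xi_4$ produces, besides the covariant derivatives $\nabla^{\S^3}_{\alpha_i}\alpha_2$ and $\nabla^{\S^2}_{df(\alpha_i)}\beta_2$, terms proportional to $\alpha_2$ and to $\beta_2$ coming from the derivatives of the normalizing functions; these vanish when paired with $\xi_5$ since $\langle\alpha_2,\alpha_3\rangle_{\S^3}=0$ and $\langle\beta_2,\beta_3\rangle_{\S^2}=0$. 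This yields
$$
\langle\nabla^{F}_{\alpha_i}\xi_4,\xi_5\rangle=u_2\,\langle\nabla^{\S^3}_{\alpha_i}\alpha_2,\alpha_3\rangle_{\S^3}+u_1\,\langle\nabla^{\S^2}_{df(\alpha_i)}\beta_2,\beta_3\rangle_{\S^2}.
$$

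Next I would eliminate the purely spherical term. From $df(\alpha_2)=\lambda\beta_2$ and the definition of the Hessian, $B(\alpha_i,\alpha_2)=\alpha_i(\lambda)\beta_2+\lambda\,\nabla^{\S^2}_{df(\alpha_i)}\beta_2-df(\nabla^{\S^3}_{\alpha_i}\alpha_2)$; pairing with $\beta_3$ and using $df(\alpha_1)=0$, $df(\alpha_3)=\mu\beta_3$ gives $\langle\nabla^{\S^2}_{df(\alpha_i)}\beta_2,\beta_3\rangle=\tfrac1\lambda\big(b_{i2}^{5}+\mu\,\langle\nabla^{\S^3}_{\alpha_i}\alpha_2,\alpha_3\rangle\big)$. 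Substituting this and using $u_2=\lambda\mu\,u_1$ the previous display collapses to
$$
\langle\nabla^{F}_{\alpha_i}\xi_4,\xi_5\rangle=\frac{u_1}{\lambda}\Big(\mu(1+\lambda^{2})\,\langle\nabla^{\S^3}_{\alpha_i}\alpha_2,\alpha_3\rangle_{\S^3}+b_{i2}^{5}\Big).
$$

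Finally I would substitute the data already on hand: the coefficients $\langle\nabla^{\S^3}_{\alpha_i}\alpha_2,\alpha_3\rangle$ read off from Lemma \ref{7.2}; the identities $b_{12}^{5}=h_{12}^{5}/u_1$, $b_{22}^{5}=(1+\lambda^{2})\sqrt{1+\mu^{2}}\,h_{22}^{5}$ and $b_{32}^{5}=b_{23}^{5}=\alpha_2(\mu)$ from Lemma \ref{7.1}; and the normalizations $\alpha_3(\lambda)=\sqrt{1+\mu^{2}}\,e_3(\lambda)$, $\alpha_2(\mu)=\sqrt{1+\lambda^{2}}\,e_2(\mu)$. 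After dividing by the rescaling factor ($1$, $\sqrt{1+\lambda^{2}}$, $\sqrt{1+\mu^{2}}$ for $i=1,2,3$) and collecting, each numerator simplifies by an identity of the shape $-\mu^{2}(1+\lambda^{2})+\mu^{2}-\lambda^{2}=-\lambda^{2}(1+\mu^{2})$: for $i=1$ the bracket becomes $-\lambda\big(\lambda(1+\mu^{2})h_{12}^{5}+\mu(1+\lambda^{2})h_{13}^{4}\big)$, yielding $\nabla^{\perp}_{e_1}\xi_4=\frac{\lambda(1+\mu^{2})h_{12}^{5}+\mu(1+\lambda^{2})h_{13}^{4}}{\lambda^{2}-\mu^{2}}\xi_5$, and the cases $i=2,3$ run identically.

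The one genuine difficulty is the bookkeeping in this last step: keeping a precise account of the factors $\sqrt{1+\lambda^{2}}$ and $\sqrt{1+\mu^{2}}$ that enter both from the rescaling $e_i=\alpha_i/c_i$ and from Lemma \ref{7.1}, and being consistent about the sign of $\mu^{2}-\lambda^{2}$ against $\lambda^{2}-\mu^{2}$. It is exactly the hypothesis $\lambda\neq\mu$ that keeps these denominators from vanishing; no Codazzi or curvature input is required, the normal connection being here entirely dictated by the ambient product structure.
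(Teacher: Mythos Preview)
Your proposal is correct and follows essentially the same route as the paper: both reduce to evaluating $\langle\nabla^{F}_{\alpha_i}\xi_4,\xi_5\rangle$, rewrite $\nabla^{\S^2}_{df(\alpha_i)}\beta_2$ via the Hessian $B$ to obtain a combination of $b^{5}_{i2}$ and $\langle\nabla^{\S^3}_{\alpha_i}\alpha_2,\alpha_3\rangle$, and then substitute the formulas of Lemmas~\ref{7.1} and~\ref{7.2}. Your packaging through $u_1,u_2$ and the single master identity $-\mu^{2}(1+\lambda^{2})+(\mu^{2}-\lambda^{2})=-\lambda^{2}(1+\mu^{2})$ is slightly more streamlined than the paper's presentation, but the argument is the same.
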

\begin{proof}
By a straightforward computation, we obtain
\begin{eqnarray*}
\langle\nabla^{\perp}_{e_{1}}\xi_{4}, \xi_{5}\rangle_{\S^3\times\S^2}
&=&\langle\nabla^{F}_{\alpha_{1}}\frac{-\lambda \alpha_{2}\oplus \beta_{2}}{\sqrt{1+\lambda^{2}}},
\frac{-\mu \alpha_{3} \oplus \beta_{3}}{\sqrt{1+\mu^{2}}}\rangle_{\S^3\times\S^2}\\
&=&\frac{\langle\nabla^F_{\alpha_{1}}(-\lambda\alpha_{2} \oplus \beta_{2}), -\mu \alpha_{3}\oplus \beta_{3}\rangle_{\S^3\times\S^2}}
{\sqrt{(1+\lambda^2)(1+\mu^2)}}\\
&=&\frac{\lambda \mu \langle\nabla^{\S^3}_{\alpha_{1}}\alpha_{2}, \alpha_{3}\rangle_{\S^3}
+\lambda^{-1}\langle\nabla^{f}_{\alpha_{1}}d f(\alpha_{2}), \beta_{3}\rangle_{\S^2}}
{\sqrt{(1+\lambda^2)(1+\mu^2)}}\\
&=& \frac{\lambda \mu \langle\nabla^{\S^3}_{\alpha_{1}}\alpha_{2}, \alpha_{3}\rangle_{\S^3}+\lambda^{-1}b_{12}^{5}
+\lambda^{-1}\mu\langle\nabla^{\S^3}_{\alpha_{1}}\alpha_{2}, \alpha_{3}\rangle_{\S^3}}{{\sqrt{(1+\lambda^2)(1+\mu^2)}}}.
\end{eqnarray*}
Using the formulas from Lemma \ref{7.1} and Lemma \ref{7.2}, we get
\begin{eqnarray*}
\langle\nabla^{\perp}_{e_{1}}\xi_{4}, \xi_{5}\rangle_{\S^3\times\S^2}
&=& \frac{\lambda \mu \langle\nabla^{\S^3}_{\alpha_{1}}\alpha_{2}, \alpha_{3}\rangle_{\S^3}+\lambda^{-1}b_{12}^{5}
+\lambda^{-1}\mu\langle\nabla^{\S^3}_{\alpha_{1}}\alpha_{2}, \alpha_{3}\rangle_{\S^3}}{{\sqrt{(1+\lambda^2)(1+\mu^2)}}}\\
&=&-\lambda\mu \frac{\mu h_{12}^{5}+\lambda h_{13}^{4}}{\mu^2-\lambda^{2}}
+\frac{h_{12}^{5}}{\lambda}-\frac{\mu(\mu h_{12}^{5}+\lambda h_{13}^{4})}{\lambda(\mu^2-\lambda^{2})}\\
&=&-\frac{\lambda(1+\mu^{2})h_{12}^{5}+\mu(1+\lambda^{2})h_{13}^{4}}{\mu^2-\lambda^{2}}.
\end{eqnarray*}
Exactly in the same way, we compute the other terms. This completes the proof.
\end{proof}
Combining Lemma \ref{7.1}, Lemma \ref{7.2} and Lemma \ref{7.3}, we can express the connection of
the induced graphical metric $\gind$ in terms of the second fundamental form, the singular values and their derivatives. In particular,
the following holds:
\begin{lemma}\label{7.4}
Let $f:U\subset\S^{3} \mapsto \S^{2}$ be a map with non-zero distinct singular values $\lambda$ and $\mu$.
Then,
\begin{eqnarray*}
\nabla^{\gind}_{e_{1}}e_{1}&=&-\frac{h_{11}^{4}}{\lambda}e_{2}-\frac{h_{11}^{5}}{\mu}e_{3}, \\
\nabla^{\gind}_{e_{1}}e_{2}&=&\quad\frac{h_{11}^{4}}{\lambda}e_{1}
-\frac{(1+\lambda^{2})\mu h_{12}^{5}+\lambda(1+\mu^{2})h_{13}^{4}}{\mu^2-\lambda^{2}}e_{3}, \\
\nabla^{\gind}_{e_{1}}e_{3}&=&\quad\frac{h_{11}^{5}}{\mu}e_{1}+\frac{(1+\lambda^{2})\mu h_{12}^{5}+\lambda(1+\mu^{2})h_{13}^{4}}{\mu^{2}-\lambda^{2}}e_{2}.
\end{eqnarray*}
Moreover,
\begin{eqnarray*}
\nabla^{\gind}_{e_{2}}e_{1}&=&-\frac{e_{1}(\arctan \lambda)}{\lambda}e_{2}-\frac{h_{12}^{5}}{\mu}e_{3},\\
\nabla^{\gind}_{e_{2}}e_{2}&=&\quad\frac{e_{1}(\arctan \lambda)}{\lambda}e_{1}
-\frac{\mu(1+\lambda^{2})h_{22}^{5}+\lambda(1+\mu^{2})e_{3}(\arctan \lambda)}{\mu^{2}-\lambda^2}e_{3},\\
\nabla^{\gind}_{e_{2}}e_{3}&=&\quad\frac{h_{12}^{5}}{\mu}e_{1}+\frac{\mu(1+\lambda^{2})h_{22}^{5}+\lambda(1+\mu^{2})e_{3}(\arctan \lambda)}{\mu^{2}-\lambda^{2}}e_{2}.
\end{eqnarray*}
Additionally,
\begin{eqnarray*}
\nabla^{\gind}_{e_{3}}e_{1}&=&-\frac{h_{13}^{4}}{\lambda}e_{2}-\frac{e_{1}(\arctan \mu)}{\mu}e_{3}, \nonumber\\
\nabla^{\gind}_{e_{3}}e_{2}&=&\quad\frac{h_{13}^{4}}{\lambda}e_{1}
-\frac{\mu(1+\lambda^{2})e_{2}(\arctan \mu)+\lambda(1+\mu^{2})h_{33}^{4}}{\mu^2-\lambda^{2}}e_{3}, \nonumber\\
\nabla^{\gind}_{e_{3}}e_{3}&=&\quad\frac{e_{1}(\arctan \mu)}{\mu}e_{1}+\frac{\mu(1+\lambda^{2})e_{2}(\arctan \mu)+\lambda(1+\mu^{2})h_{33}^{4}}{\mu^{2}-\lambda^{2}}e_{2},
\end{eqnarray*}
and
\begin{equation*}
[e_{1}, e_{2}]=\frac{h_{11}^{4}}{\lambda}e_{1}+\frac{e_1(\arctan\lambda)}{\lambda}e_2+\frac{\lambda(1+\mu^{2})}{\mu(\lambda^{2}-\mu^{2})}(\lambda h_{12}^{5}+\mu h_{13}^{4})e_{3},
\end{equation*}
\begin{equation*}
[e_{1}, e_{3}]=\frac{h_{11}^{5}}{\mu}e_{1}+\frac{\mu(1+\lambda^{2})}{\lambda(\mu^{2}-\lambda^{2})}(\lambda h_{12}^{5}+\mu h_{13}^{4})e_{2}+\frac{e_1(\arctan\mu)}{\mu}e_3,
\end{equation*}
where all the frames are with respect to the frames arising from the singular value decomposition.
\end{lemma}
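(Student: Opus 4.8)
The plan is to obtain $\nabla^{\gind}$ from the Levi-Civita connection $\nabla^{\S^3}$ of the round metric of $\S^3$ by means of Lemma \ref{christoffel}, which here ($m=3$) reads
$$
\nabla^{\gind}_{v_1}v_2-\nabla^{\S^3}_{v_1}v_2=\big(I_3+df^tdf\big)^{-1}df^tB(v_1,v_2).
$$
First I would evaluate the right-hand side on the frame $\{\alpha_1,\alpha_2,\alpha_3\}$. With respect to it $df^tdf$ is diagonal with eigenvalues $0,\lambda^2,\mu^2$, so $(I_3+df^tdf)^{-1}$ is diagonal with eigenvalues $1,(1+\lambda^2)^{-1},(1+\mu^2)^{-1}$, while $df^t\beta_2=\lambda\alpha_2$ and $df^t\beta_3=\mu\alpha_3$. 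Writing $B(\alpha_i,\alpha_j)=b_{ij}^{4}\beta_2+b_{ij}^{5}\beta_3$ this yields
$$
\nabla^{\gind}_{\alpha_i}\alpha_j=\nabla^{\S^3}_{\alpha_i}\alpha_j+\frac{\lambda b_{ij}^{4}}{1+\lambda^{2}}\,\alpha_2+\frac{\mu b_{ij}^{5}}{1+\mu^{2}}\,\alpha_3.
$$
Into this I would substitute the expressions for $\nabla^{\S^3}_{\alpha_i}\alpha_j$ from Lemma \ref{7.2} and for the $b_{ij}^{\alpha}$ from Lemma \ref{7.1}, so that the right-hand side is written purely through the $h_{ij}^{\alpha}$, the values $\lambda,\mu$, and the derivatives $\alpha_k(\lambda),\alpha_k(\mu)$.

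The next step is to pass from the $\gind_{\S^3}$-orthonormal frame $\{\alpha_i\}$ to the $\gind$-orthonormal frame $\{e_i\}$ via $e_1=\alpha_1$, $e_2=\alpha_2/\sqrt{1+\lambda^2}$, $e_3=\alpha_3/\sqrt{1+\mu^2}$. Because the normalising factors depend on $\lambda,\mu$, the change of frame introduces extra terms of the form $e_i\big((1+\lambda^2)^{-1/2}\big)\alpha_2$ and $e_i\big((1+\mu^2)^{-1/2}\big)\alpha_3$; these involve $e_i(\lambda)$ and $e_i(\mu)$, which I would rewrite in terms of $h$-components using $e_i(\arctan\lambda)=h_{i2}^{4}$ and $e_i(\arctan\mu)=h_{i3}^{5}$ from \eqref{Eq: components h} (for $i=1$ one uses $\alpha_1(\lambda)=b_{12}^{4}$ as in \eqref{Eq: B12 4}, and likewise for $\mu$). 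Collecting the $e_2$- and $e_3$-coefficients, every combination contracts by elementary cancellations — e.g. $\tfrac{\lambda}{\sqrt{1+\lambda^2}}-\tfrac{\sqrt{1+\lambda^2}}{\lambda}=-\tfrac{1}{\lambda\sqrt{1+\lambda^2}}$ on the diagonal, and $\mu h_{12}^{5}-(1+\mu^2)\tfrac{\mu h_{12}^{5}+\lambda h_{13}^{4}}{\mu^2-\lambda^2}=-\tfrac{(1+\lambda^2)\mu h_{12}^{5}+\lambda(1+\mu^2)h_{13}^{4}}{\mu^2-\lambda^2}$ off it — giving the nine displayed formulas for $\nabla^{\gind}_{e_i}e_j$. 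Lemma \ref{7.3} furnishes the matching normal-connection data and can serve to double-check.

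For the two Lie brackets no further ingredient is needed: since $\nabla^{\gind}$ is torsion-free, $[e_1,e_2]=\nabla^{\gind}_{e_1}e_2-\nabla^{\gind}_{e_2}e_1$ and $[e_1,e_3]=\nabla^{\gind}_{e_1}e_3-\nabla^{\gind}_{e_3}e_1$; substituting the formulas just obtained and putting the $e_3$- (respectively $e_2$-) coefficient over the common denominator $\mu(\lambda^2-\mu^2)$ reproduces the stated brackets. The only genuine difficulty is bookkeeping: one must be scrupulous about which frame each index refers to and must not drop the $\lambda,\mu$-derivative terms generated by the rescaling, since those feed precisely the off-diagonal coefficients. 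Beyond Lemmas \ref{christoffel}, \ref{7.1} and \ref{7.2}, no new idea enters.
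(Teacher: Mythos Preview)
Your proposal is correct and follows essentially the same route as the paper, which simply states that the result is obtained by ``combining Lemma \ref{7.1}, Lemma \ref{7.2} and Lemma \ref{7.3}'' without further detail. Your use of Lemma \ref{christoffel} to pass from $\nabla^{\S^3}$ to $\nabla^{\gind}$, followed by the rescaling $\alpha_i\mapsto e_i$, is exactly the intended computation; your observation that Lemma \ref{7.3} is not strictly required for the tangential connection (only as a consistency check) is also accurate.
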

As a consequence of the above formulas, we may express the differential $\varphi$ of the Gauss map
of the submersion $f:(\S^3,\gind)\to\S^2$ in terms of the singular values
and the second fundamental form.
\begin{lemma}\label{7.5}
Let $f:U\subset\S^{3} \mapsto \S^{2}$ be a map with non-zero distinct singular values $\lambda$ and $\mu$.
Then, the tensor $\varphi$ satisfies
\begin{eqnarray*}
\varphi_{12}&=&\gind(\varphi(e_{1}), e_{2})=\lambda^{-1}h_{11}^{4},\\
\varphi_{22}&=&\gind(\varphi(e_{2}), e_{2})=\lambda^{-1}h_{12}^{4},\\
\varphi_{32}&=&\gind(\varphi(e_{3}), e_{2})=\lambda^{-1}h_{13}^{4}.
\end{eqnarray*}
Moreover,
\begin{eqnarray*}
\varphi_{13}&=&\gind(\varphi(e_{1}), e_{3})=\mu^{-1}h_{11}^{5},\\
\varphi_{23}&=&\gind(\varphi(e_{2}), e_{3})=\mu^{-1}h_{12}^{5},\\
\varphi_{33}&=&\gind(\varphi(e_{3}), e_{3})=\mu^{-1}h_{13}^{5},
\end{eqnarray*}
where the frames are arising from the singular value decomposition.
\end{lemma}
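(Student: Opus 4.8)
The plan is to read all six identities off the connection formulas already established in Lemma \ref{7.4}, supplemented by the list of components of the second fundamental form in Lemma \ref{7.1}. No new computation is needed beyond unwinding the definition of $\varphi$.

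First I would unwind that definition. Since $\varphi(v)=-\nabla^{\gind}_{v}e_{1}$ and $e_{1}$ is unit with respect to $\gind$, we have $\gind(\varphi(v),e_{1})=-\tfrac{1}{2}v\big(\gind(e_{1},e_{1})\big)=0$, so $\varphi$ indeed takes values in $\mathcal{H}=\operatorname{span}\{e_{2},e_{3}\}$ and is completely determined by the six numbers
$$
\varphi_{ij}=\gind(\varphi(e_{i}),e_{j})=-\gind\big(\nabla^{\gind}_{e_{i}}e_{1},e_{j}\big),\qquad i\in\{1,2,3\},\quad j\in\{2,3\}.
$$
Hence it is enough to extract the $e_{2}$- and $e_{3}$-components of $\nabla^{\gind}_{e_{1}}e_{1}$, $\nabla^{\gind}_{e_{2}}e_{1}$ and $\nabla^{\gind}_{e_{3}}e_{1}$, which are displayed explicitly in Lemma \ref{7.4}.

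For four of the six identities the substitution is immediate: the $e_{2}$- and $e_{3}$-components of $\nabla^{\gind}_{e_{1}}e_{1}$ are $-h_{11}^{4}/\lambda$ and $-h_{11}^{5}/\mu$, the $e_{3}$-component of $\nabla^{\gind}_{e_{2}}e_{1}$ is $-h_{12}^{5}/\mu$, and the $e_{2}$-component of $\nabla^{\gind}_{e_{3}}e_{1}$ is $-h_{13}^{4}/\lambda$; this gives $\varphi_{12}=\lambda^{-1}h_{11}^{4}$, $\varphi_{13}=\mu^{-1}h_{11}^{5}$, $\varphi_{23}=\mu^{-1}h_{12}^{5}$ and $\varphi_{32}=\lambda^{-1}h_{13}^{4}$. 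For the remaining two, Lemma \ref{7.4} yields $\varphi_{22}=\lambda^{-1}e_{1}(\arctan\lambda)$ and $\varphi_{33}=\mu^{-1}e_{1}(\arctan\mu)$, and these are rewritten in the asserted form by invoking the identities $h_{12}^{4}=e_{1}(\arctan\lambda)$ and $h_{13}^{5}=e_{1}(\arctan\mu)$ from \eqref{Eq: components h} in Lemma \ref{7.1}. Collecting the six cases completes the proof.

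I do not expect a genuine obstacle here: the substance of the statement has already been absorbed into Lemmas \ref{7.1}--\ref{7.4}, which themselves rest on the expression for the second fundamental form of the graph in terms of the Hessian $B$ of $f$ (Lemma \ref{Aform}) and on the connection difference of Lemma \ref{christoffel}. The only delicate points are keeping track of the sign conventions when passing between $\varphi$, defined as \emph{minus} the differential of the Gauss map, and the components $h_{ij}^{\alpha}$, and the standing hypothesis $\lambda\neq\mu$, which guarantees that the singular value frame $\{e_{1},e_{2},e_{3}\}$ is uniquely determined and smooth on the open dense set where the computation is valid.
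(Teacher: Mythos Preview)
Your proposal is correct and follows exactly the approach the paper intends: the paper presents Lemma~\ref{7.5} without a separate proof, stating only that it is a consequence of the preceding formulas, and you have spelled out precisely how it follows by reading off the $e_{2}$- and $e_{3}$-components of $\nabla^{\gind}_{e_{i}}e_{1}$ from Lemma~\ref{7.4} and invoking \eqref{Eq: components h} from Lemma~\ref{7.1} for the two cases $\varphi_{22}$ and $\varphi_{33}$.
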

\subsection{Bochner-Weitzneb\"ock formulas}
In the sequel, we compute the gradients and the Laplacians of the functions $u_1$ and $u_2$ in the case where
$f$ is a minimal map. Let us point out here that the function $u_1$ is smooth and well defined globally on $M$, even at points where
$f$ is not a submersion. The proofs
are straightforward and for that reason we omit them; for details, we refer to
\cite{wang1}.

\begin{lemma}\label{lapu}
Let $f:U\subset\S^{3} \mapsto \S^{2}$ be a minimal map. The gradient and the Laplacian of the functions $u_1$
are given by
$$
\nabla u_1=-u_1{\sum}_{k=1}^{3}\big(\lambda h_{k2}^{4}+\mu h_{k3}^{5}\big) e_{k}
$$
and
$$
\Delta u_1 =-|A|^{2} u_1 + 2 \lambda\mu\, u_1 {\sum}_{k=1}^3\big(h_{2k}^{4}h_{3k}^{5}-h_{2k}^{5}h_{3k}^{4}\big)-2(\lambda^{2}+\mu^{2})u_1^{3},
$$
where $\{e_1,e_2,e_3\}$ is the frame arising from the singular value decomposition.
\end{lemma}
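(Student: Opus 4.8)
The plan is to run the classical parallel-form Bochner computation, following the method of \cite{wang1}, working on the open dense subset of $U$ where $\lambda$ and $\mu$ are smooth and the adapted frame $\{e_1,e_2,e_3\}$ of $(U,\gind)$ coming from the singular value decomposition is available. One writes $\Omega_1=\pi^{*}_{\S^3}\Omega_{\S^3}$, which is parallel --- hence closed --- for $\nabla^{\gk}$, and observes that along the graph $u_1=(F^*\Omega_1)(e_1,e_2,e_3)$, where $dF(e_1)=\alpha_1$, $dF(e_2)=(1+\lambda^2)^{-1/2}(\alpha_2\oplus\lambda\beta_2)$ and $dF(e_3)=(1+\mu^2)^{-1/2}(\alpha_3\oplus\mu\beta_3)$, with unit normals $\xi_4,\xi_5$ as in Section \ref{83928012021}. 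The first step is to differentiate once. Using $\nabla^{\gk}\Omega_1=0$ together with $\nabla^{\gk}_{dF(e_k)}dF(e_j)=dF(\nabla^{\gind}_{e_k}e_j)+A(e_k,e_j)$, the tangential parts $dF(\nabla^{\gind}_{e_k}e_j)$ drop out by antisymmetry of $\Omega_1$ (a repeated argument) combined with skew-symmetry of the connection forms in an orthonormal frame, so that $e_k(u_1)=\sum_{j}\Omega_1(dF(e_1),\dots,A(e_k,e_j),\dots,dF(e_3))$ with $A(e_k,e_j)=h^4_{kj}\xi_4+h^5_{kj}\xi_5$ inserted in the $j$-th slot. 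A direct evaluation of the volume form $\Omega_{\S^3}$ on the $\S^3$-components of the arguments shows that the only nonvanishing substitutions are $\Omega_1(dF(e_1),\xi_4,dF(e_3))=-\lambda u_1$ and $\Omega_1(dF(e_1),dF(e_2),\xi_5)=-\mu u_1$, whence $e_k(u_1)=-u_1(\lambda h^4_{k2}+\mu h^5_{k3})$ and the gradient formula follows.

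For the Laplacian I would fix $p\in U$, choose the adapted frame geodesic at $p$ for both $\nabla^{\gind}$ and $\nabla^\perp$, so that $\Delta u_1(p)=\sum_k e_k(e_k(u_1))$, and differentiate the first-order identity once more. The derivative $\nabla^{\gk}_{dF(e_k)}$ either (i) hits a surviving factor $dF(e_i)$ with $i\ne j$, producing another $A(e_k,e_i)$ and hence a term $\Omega_1(\dots,A(e_k,e_i),\dots,A(e_k,e_j),\dots,dF(e_m),\dots)$ quadratic in the $h^\alpha_{\cdot\cdot}$, or (ii) hits $A(e_k,e_j)$, producing $\nabla^{\gk}_{dF(e_k)}A(e_k,e_j)$, whose normal part at $p$ is $(\nabla^\perp_{e_k}A)(e_k,e_j)$ and whose tangential part, by the Weingarten relation, equals $-\sum_m\langle A(e_k,e_j),A(e_k,e_m)\rangle\,dF(e_m)$. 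Summing over $k$ and $j$: the tangential parts from (ii) collapse, after reinserting $dF(e_m)$ in the $j$-th slot, to $-u_1\sum_{k,j}|A(e_k,e_j)|^2=-|A|^2u_1$; and the normal parts from (ii) are treated with the Codazzi equation, which turns $\sum_k(\nabla^\perp_{e_k}A)(e_k,e_j)$ into $\sum_k(\nabla^\perp_{e_j}A)(e_k,e_k)+\sum_k(\rk(dF(e_k),dF(e_j),dF(e_k)))^\perp$, the first sum vanishing since $\sum_k A(e_k,e_k)=0$ by minimality.

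It then remains to evaluate the surviving contractions explicitly, using the constant curvature $+1$ of both factors and $\rk=\pi^{*}_{\S^3}R_{\S^3}\oplus\pi^{*}_{\S^2}R_{\S^2}$. For the type-(i) quadratic terms, every substitution with a repeated $\S^3$-direction vanishes, and inserting $\Omega_1(dF(e_1),\xi_4,\xi_5)=\lambda\mu u_1$ one checks that only the slot pair $\{2,3\}$ survives, producing $2\lambda\mu u_1\sum_k\big(h^4_{2k}h^5_{3k}-h^5_{2k}h^4_{3k}\big)$. For the curvature contribution $\sum_{k,j}\Omega_1(dF(e_1),\dots,(\rk(dF(e_k),dF(e_j),dF(e_k)))^\perp,\dots,dF(e_3))$, one computes with $R_{\S^n}(X,Y,Z,W)=\langle Y,Z\rangle\langle X,W\rangle-\langle X,Z\rangle\langle Y,W\rangle$ and the explicit $\S^3$- and $\S^2$-components of the $dF(e_i)$ and of $\xi_4,\xi_5$: the $j=1$ contribution vanishes (the relevant curvature vector is tangential, being a multiple of $dF(e_1)$), while the $j=2$ and $j=3$ contributions are multiples of $\xi_4$ and $\xi_5$ respectively and give $-2\lambda^2u_1^3$ and $-2\mu^2u_1^3$, so the block equals $-2(\lambda^2+\mu^2)u_1^3$. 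Adding the three blocks yields the stated formula. I expect the main obstacle to be precisely this last bookkeeping step: one must keep careful track of signs and of which slot each normal or curvature vector occupies when evaluating the $3$-form $\Omega_1$, and invoke Codazzi together with minimality exactly once in the right place so that only the genuinely quadratic terms and the curvature remainder are left --- routine but error-prone, which is presumably why the original statement defers to \cite{wang1}.
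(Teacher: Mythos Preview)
Your proposal is correct and follows precisely the approach the paper itself adopts: the authors omit the proof of this lemma, stating that it is straightforward and referring to \cite{wang1}, and your argument is exactly the parallel-form Bochner computation of Wang carried out in the present setting. The identification of the three blocks (the $-|A|^2u_1$ term from the Weingarten tangential part, the cross term $2\lambda\mu u_1\sum_k(h^4_{2k}h^5_{3k}-h^5_{2k}h^4_{3k})$ from double normal substitutions in slots $2$ and $3$, and the curvature remainder $-2(\lambda^2+\mu^2)u_1^3$ via Codazzi and minimality) is accurate, and the explicit evaluations of $\Omega_1$ on the mixed frames $\{dF(e_i),\xi_\alpha\}$ are consistent with the singular value decomposition in Section~\ref{83928012021}.
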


\begin{lemma}\label{lapv}
Let $f:U\subset\S^{3} \mapsto \S^{2}$ be a minimal submersion. The gradient and the Laplacian of the function $u_2$ are given by:
$$
\nabla u_2 =u_1{\sum}_{k=1}^{3}\big(\mu h_{k2}^{4}+ \lambda h_{k3}^{5}\big) e_{k}
$$
and
\begin{eqnarray*}\label{Eq: Laplacian v 3 sphere}
\Delta u_2\hspace{-8pt}&=&\hspace{-8pt}-u_2{\sum}_{k=1}^3\big(|h_{k2}|^{2}+|h_{k3}|^{2}\big)+2u_1{\sum}_{k=1}^3\big(h_{2k}^{4}h_{3k}^{5}-h_{2k}^{5}h_{3k}^{4}\big)+4u_1^{2}u_2\\
&&+\mu u_1\frac{(h_{11}^{4})^{2}+(h_{12}^{4})^{2}+(h_{13}^{4})^{2}}{\lambda}
+\lambda u_1\frac{(h_{11}^{5})^{2}+(h_{12}^{5})^{2}+(h_{13}^{5})^{2}}{\mu}.
\end{eqnarray*}
\end{lemma}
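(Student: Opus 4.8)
\emph{Proof proposal.} The plan is to read both formulas off the \emph{parallel} $2$-form $\Omega_2=\pi_{\S^2}^{*}\Omega_{\S^2}$ on $\S^3\times\S^2$. I would work on the open dense subset of $U$ where the non-zero singular values $\lambda\le\mu$ are smooth, fix there the adapted orthonormal frame $\{e_1,e_2,e_3\}$ of $(\S^3,\gind)$ coming from the singular value decomposition (so $e_1$ spans $\mathcal V=\ker df$ and $e_2,e_3$ span $\mathcal H$) together with the normal frame $\{\xi_4,\xi_5\}$ of \eqref{normal}. Comparing $\S^2$-components: $dF(e_1)=(e_1,0)$ has none, while $dF(e_2),dF(e_3)$ have $\S^2$-parts $\tfrac{\lambda}{\sqrt{1+\lambda^{2}}}\beta_{2},\tfrac{\mu}{\sqrt{1+\mu^{2}}}\beta_{3}$ and $\xi_4,\xi_5$ have $\S^2$-parts $\tfrac{1}{\sqrt{1+\lambda^{2}}}\beta_{2},\tfrac{1}{\sqrt{1+\mu^{2}}}\beta_{3}$; fixing orientations so that $\beta_2\wedge\beta_3>0$ yields the pointwise identities
\[
u_2=\Omega_2\big(dF(e_2),dF(e_3)\big),\qquad \Omega_2\big(\xi_4,dF(e_3)\big)=\mu\,u_1,\qquad \Omega_2\big(dF(e_2),\xi_5\big)=\lambda\,u_1,
\]
while $\Omega_2(\xi_5,dF(e_3))=\Omega_2(dF(e_2),\xi_4)=0$.

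\textbf{Gradient.} I would differentiate $u_2=\Omega_2(dF(e_2),dF(e_3))$ along $e_k$, using that $\Omega_2$ is parallel and the Gauss formula $\nabla^{\gk}_{dF(e_k)}dF(e_j)=dF(\nabla^{\gind}_{e_k}e_j)+A(e_k,e_j)$. The tangential contributions only involve $\gind(\nabla^{\gind}_{e_k}e_2,e_2)=\gind(\nabla^{\gind}_{e_k}e_3,e_3)=0$, so only the normal part survives and
\[
e_k(u_2)=\Omega_2\big(A(e_k,e_2),dF(e_3)\big)+\Omega_2\big(dF(e_2),A(e_k,e_3)\big)=h_{k2}^{4}\,\Omega_2(\xi_4,dF(e_3))+h_{k3}^{5}\,\Omega_2(dF(e_2),\xi_5)=u_1\big(\mu h_{k2}^{4}+\lambda h_{k3}^{5}\big),
\]
which is the asserted gradient. (The same computation with $\Omega_1=\pi_{\S^3}^{*}\Omega_{\S^3}$ reproduces Lemma \ref{lapu}.)

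\textbf{Laplacian.} The identity $e_k(u_2)=\mu u_1h_{k2}^{4}+\lambda u_1h_{k3}^{5}$ holds as an equality of functions, so I would write $\Delta u_2=\sum_k e_k\big(\mu u_1h_{k2}^{4}+\lambda u_1h_{k3}^{5}\big)-\sum_k(\nabla^{\gind}_{e_k}e_k)(u_2)$ and feed in all the available data: $e_k(u_1)=-u_1(\lambda h_{k2}^{4}+\mu h_{k3}^{5})$ from Lemma \ref{lapu}; $e_k(\lambda)=(1+\lambda^{2})h_{k2}^{4}$ and $e_k(\mu)=(1+\mu^{2})h_{k3}^{5}$ from \eqref{Eq: components h}; the Christoffel symbols $\gind(\nabla^{\gind}_{e_k}e_j,e_l)$ from Lemma \ref{7.4}, in particular $\gind(\nabla^{\gind}_{e_k}e_2,e_1)=h_{k1}^{4}/\lambda$ and $\gind(\nabla^{\gind}_{e_k}e_3,e_1)=h_{k1}^{5}/\mu$; the normal connection $\langle\nabla^{\perp}_{e_k}\xi_4,\xi_5\rangle$ from Lemma \ref{7.3}; and the traced derivatives $\sum_k e_k(h_{k2}^{4})$ and $\sum_k e_k(h_{k3}^{5})$, which I would pull out of the Codazzi equation. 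Concretely, $\sum_k(\nabla^{\perp}_{e_k}A)(e_k,e_2)=\sum_k\big(\rk(dF(e_k),dF(e_2),dF(e_k))\big)^{\perp}$, the term $\nabla^{\perp}_{e_2}H$ vanishing since $H=\operatorname{tr}_{\gind}A=0$, and expanding $(\nabla^{\perp}_{e_k}A)(e_k,e_2)=\nabla^{\perp}_{e_k}(A(e_k,e_2))-A(\nabla^{\gind}_{e_k}e_k,e_2)-A(e_k,\nabla^{\gind}_{e_k}e_2)$ isolates $\sum_k e_k(h_{k2}^{4})$ in terms of the data above plus the curvature term; since $\rk=\pi_{\S^3}^{*}R_{\S^3}\oplus\pi_{\S^2}^{*}R_{\S^2}$ with both factors of constant curvature $1$, decomposing $dF(e_k)=(e_k,df(e_k))$ and using $df(e_1)=0$, $|df(e_2)|^{2}_{\S^2}=\lambda^{2}/(1+\lambda^{2})$, $|df(e_3)|^{2}_{\S^2}=\mu^{2}/(1+\mu^{2})$ gives $\sum_k\langle\rk(dF(e_k),dF(e_2),dF(e_k)),\xi_4\rangle=2\lambda u_1^{2}$ and, symmetrically, $2\mu u_1^{2}$ on the $\xi_5$ side. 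Substituting everything and simplifying by repeated use of $h_{11}^{4}+h_{22}^{4}+h_{33}^{4}=0$ and $h_{11}^{5}+h_{22}^{5}+h_{33}^{5}=0$, the $\gind(\nabla^{\gind}_{\cdot}e_2,e_1)$- and $\gind(\nabla^{\gind}_{\cdot}e_3,e_1)$-contributions produce $\mu u_1\frac{(h_{11}^{4})^{2}+(h_{12}^{4})^{2}+(h_{13}^{4})^{2}}{\lambda}+\lambda u_1\frac{(h_{11}^{5})^{2}+(h_{12}^{5})^{2}+(h_{13}^{5})^{2}}{\mu}$, the curvature terms add up to $\mu u_1\cdot2\lambda u_1^{2}+\lambda u_1\cdot2\mu u_1^{2}=4u_1^{2}u_2$, the normal-connection terms organise into the antisymmetric cross term $2u_1\sum_k(h_{2k}^{4}h_{3k}^{5}-h_{2k}^{5}h_{3k}^{4})$, and the remaining quadratic terms collapse to $-u_2\sum_k(|h_{k2}|^{2}+|h_{k3}|^{2})$, giving the stated formula. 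On the locus $\lambda=\mu$, where the adapted frame is not unique, the identity follows by continuity, or by rerunning the same steps with an arbitrary smooth oriented orthonormal frame of $\mathcal H$ and direct Koszul/Codazzi computations replacing Lemmas \ref{7.2}--\ref{7.5}, the spurious $(\mu^{2}-\lambda^{2})$-denominators never surviving to the answer.

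\textbf{Main obstacle.} Conceptually there is nothing deep, but the bookkeeping is heavy. The genuine work is (i) extracting $\sum_k e_k(h_{k2}^{4})$ and $\sum_k e_k(h_{k3}^{5})$ from Codazzi in the \emph{non-parallel} singular-value frame while correctly carrying every connection coefficient of Lemmas \ref{7.3}--\ref{7.4}, and (ii) the $\S^3\times\S^2$ curvature computation via the two-factor splitting of graph tangent vectors. The five terms of the statement only materialise after the cancellations forced by minimality.
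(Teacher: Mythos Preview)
Your approach is correct and is precisely the standard Bochner--Weitzenb\"ock argument via the parallel $2$-form $\Omega_2=\pi_{\S^2}^{*}\Omega_{\S^2}$ that the paper has in mind: the authors omit the proof and refer the reader to Wang \cite{wang1}, where exactly this computation (differentiating $F^{*}\Omega_2$ twice, using Gauss and Codazzi together with the product curvature $\rk=\pi_{\S^3}^{*}R_{\S^3}\oplus\pi_{\S^2}^{*}R_{\S^2}$) is carried out. Your identification of the pointwise values of $\Omega_2$ on the adapted frame, the gradient step, and the bookkeeping plan for the Laplacian (Codazzi to extract $\sum_k e_k(h_{k2}^{4})$, $\sum_k e_k(h_{k3}^{5})$, the curvature contribution $4u_1^{2}u_2$, and the cancellations forced by $H=0$) all match the intended derivation.
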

\begin{lemma}\label{8222021}
Let $f:U\subset\S^{3} \mapsto \S^{2}$ be a minimal submersion.
The gradient and the Laplacian of the angle function $w=u_1+u_2$
satisfy
$$
|\nabla w|^2=(\mu-\lambda)^2u_1^2\big((h_{12}^4-h_{13}^5)^2+(h_{23}^{4}-h_{33}^{5})^{2}+(h_{22}^{4}-h_{23}^{5})^{2}\big)
$$
and
\begin{eqnarray*}
\Delta w&=&-w\big((h_{12}^{4}-h_{13}^{5})^{2}+(h_{23}^{4}-h_{33}^{5})^{2}+(h_{22}^{4}-h_{23}^{5})^{2}\big)\\
&&-w\big((h_{23}^{4}+h_{22}^{5})^{2}+(h_{12}^{5}+h_{13}^{4})^{2}+(h_{33}^{4}+h_{32}^{5})^{2}\big)\\
&&-(\mu-\lambda)u_1\Big(\tfrac{1}{\mu}{\sum}_{k=1}^3(h^5_{1k})^2-\tfrac{1}{\lambda}{\sum}_{k=1}^3(h^4_{1k})^2\Big)\\
&&-2(\mu-\lambda)^{2}u_1^{3},
\end{eqnarray*}
where as usual $\{e_1,e_2,e_3\}$ is the orthonormal frame arising from the singular value decomposition.
\end{lemma}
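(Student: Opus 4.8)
The plan is to obtain both identities simply by adding the formulas of Lemma \ref{lapu} and Lemma \ref{lapv} for the functions $u_1$ and $u_2$, and then to reorganize the resulting expressions in the components $h^{\alpha}_{ij}$ using nothing beyond the symmetry $h^{\alpha}_{ij}=h^{\alpha}_{ji}$ of the second fundamental form together with the elementary relations $u_2=\lambda\mu\,u_1$ and $u_1^{-2}=(1+\lambda^{2})(1+\mu^{2})$. In particular, no geometric input beyond Lemmas \ref{lapu}--\ref{lapv} is needed, and minimality enters only through those two lemmas.

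For the gradient, since $w=u_1+u_2$, Lemma \ref{lapu} and Lemma \ref{lapv} give directly
\[
\nabla w=\nabla u_1+\nabla u_2=(\mu-\lambda)\,u_1\sum_{k=1}^{3}\bigl(h^{4}_{k2}-h^{5}_{k3}\bigr)e_k,
\]
because the $\lambda$- and $\mu$-weightings of the two gradients combine into the common factor $\mu-\lambda$. Taking the norm and using $h^{4}_{32}=h^{4}_{23}$ yields the stated formula for $|\nabla w|^{2}$.

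For the Laplacian I would first rewrite the ``column'' sum in Lemma \ref{lapv}, using symmetry of $A$, as $\sum_{k=1}^{3}\bigl(|h_{k2}|^{2}+|h_{k3}|^{2}\bigr)=|A|^{2}-S_4-S_5$, where $S_{\alpha}:=(h^{\alpha}_{11})^{2}+(h^{\alpha}_{12})^{2}+(h^{\alpha}_{13})^{2}=\sum_{k}(h^{\alpha}_{1k})^{2}$; and, again by symmetry, split $|A|^{2}=D+S_4+S_5$, where $D$ is exactly the collection of squared entries produced when one expands the two bracketed sums of squares appearing in the statement. The cubic terms then combine at once: $-2(\lambda^{2}+\mu^{2})u_1^{3}+4u_1^{2}u_2=-2(\mu-\lambda)^{2}u_1^{3}$. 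The mixed terms combine as $(2\lambda\mu\,u_1+2u_1)T=2(1+\lambda\mu)u_1\,T=2w\,T$, with $T:=\sum_{k}\bigl(h^{4}_{2k}h^{5}_{3k}-h^{5}_{2k}h^{4}_{3k}\bigr)$; applying $h^{\alpha}_{ij}=h^{\alpha}_{ji}$ once more,
\[
T=\bigl(h^{4}_{12}h^{5}_{13}+h^{4}_{22}h^{5}_{23}+h^{4}_{23}h^{5}_{33}\bigr)-\bigl(h^{4}_{13}h^{5}_{12}+h^{5}_{22}h^{4}_{23}+h^{4}_{33}h^{5}_{23}\bigr),
\]
and $2wT$ is precisely the cross-term obtained by expanding the two bracketed sums of squares in the statement. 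Thus $-|A|^{2}w=-Dw-(S_4+S_5)w$ together with $2wT$ already accounts for the entire $-w\bigl[\dots\bigr]-w\bigl[\dots\bigr]$ part of the target expression, and it remains only to match the leftover $S_4$- and $S_5$-coefficients: one has $-w+\tfrac{\mu(1+\lambda^{2})}{\lambda}u_1=\tfrac{\mu-\lambda}{\lambda}u_1$ and $-w+\tfrac{\lambda(1+\mu^{2})}{\mu}u_1=-\tfrac{\mu-\lambda}{\mu}u_1$, as follows immediately from $w=(1+\lambda\mu)u_1$, which reproduces the term $-(\mu-\lambda)u_1\bigl(\tfrac1\mu\sum_k(h^{5}_{1k})^{2}-\tfrac1\lambda\sum_k(h^{4}_{1k})^{2}\bigr)$.

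The only real difficulty is the bookkeeping. One has to expand the two bracketed sums of squares in the statement, keep careful track of which components $h^{\alpha}_{ij}$ (after using $h^{\alpha}_{ij}=h^{\alpha}_{ji}$) fall into the ``diagonal'' block $D$ and which make up the cross-term $2wT$, and then check that the remaining $S_4$- and $S_5$-coefficients collapse correctly under $u_2=\lambda\mu u_1$ and $u_1^{-2}=(1+\lambda^{2})(1+\mu^{2})$. There is no conceptual obstacle beyond this; in particular the Codazzi-type identities of Lemma \ref{7.1} are not needed for this computation.
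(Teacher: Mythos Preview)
Your proposal is correct and is exactly the straightforward computation the paper has in mind (the paper omits the proof entirely, calling it ``straightforward'' and referring to \cite{wang1}); your regrouping via $|A|^2=D+S_4+S_5$, $w=(1+\lambda\mu)u_1$, and the identification of the cross-term $2wT$ with the mixed products in the two bracketed sums is precisely the right bookkeeping, and no Codazzi identities are needed.
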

\section{Proofs of our main theorems}
Here we give the proofs of Theorem \ref{thmc}, Theorem \ref{thma} and Theorem \ref{thmb}. We follow the
notation introduced in the previous sections.
\begin{thmb}\label{cor sigma}
Let $f:\S^3\to\S^2$ be a minimal submersion whose Gauss map $\mathcal{G}$ satisfies the condition
$$
(\mu-\lambda)\big(\mu\big|\operatorname{Im}d\mathcal{G}\big|^2-\lambda\big|\operatorname{Re}d\mathcal{G}\big|^2\big)
\ge 0,
$$
where $0<\lambda\le\mu$ are the non-zero singular values of $df$.
Then, the map $f$ is weakly conformal with totally geodesic fibers. Moreover, there exists an isometry $T:\S^3\to\S^3$ and a 
conformal diffeomorphism $\varPhi:\S^2(1/2)\to\S^2$ such that
$f=\varPhi\circ\varPi\circ T,$
where $\varPi$ is the standard Hopf fibration.
\end{thmb}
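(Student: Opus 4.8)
The plan is to show, in this order, that $f$ is weakly conformal, that its fibers are great circles, and then to identify $f$ by the rigidity of great-circle fibrations of $\S^3$. The object that drives the argument is the angle function $w=u_1+u_2$: since $f$ is a submersion we have $\lambda,\mu>0$ everywhere, so $w$ is a smooth function on the closed manifold $\S^3$ with $0<w\le 1$, and $w=1$ precisely where $\lambda=\mu$.

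\emph{Step 1: weak conformality.} I would apply the Bochner formula of Lemma \ref{8222021}. Away from $\{\lambda=\mu\}$ the singular-value frame is canonical, and Lemma \ref{7.5} gives $\sum_{k}(h_{1k}^{5})^{2}=\mu^{2}\big|\operatorname{Im}d\mathcal{G}\big|^{2}$ and $\sum_{k}(h_{1k}^{4})^{2}=\lambda^{2}\big|\operatorname{Re}d\mathcal{G}\big|^{2}$, so that the only summand of $\Delta w$ which is not manifestly nonpositive satisfies
\[ -(\mu-\lambda)u_1\Big(\tfrac1\mu\sum_k(h_{1k}^{5})^{2}-\tfrac1\lambda\sum_k(h_{1k}^{4})^{2}\Big)=-u_1(\mu-\lambda)\big(\mu\big|\operatorname{Im}d\mathcal{G}\big|^{2}-\lambda\big|\operatorname{Re}d\mathcal{G}\big|^{2}\big)\le 0 \]
by the pinching hypothesis and $u_1>0$; on $\{\lambda=\mu\}$ this term vanishes identically. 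Hence $\Delta w\le 0$ on all of $\S^3$, and integrating over $\S^3$ forces $\Delta w\equiv 0$, so $w$ is constant. The vanishing of the remaining summand $2(\mu-\lambda)^{2}u_1^{3}$ then yields $\lambda\equiv\mu$, i.e. $f$ is weakly conformal.

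\emph{Step 2: totally geodesic fibers.} Since $\Delta w\equiv 0$, every summand of Lemma \ref{8222021} vanishes; in particular $(h_{23}^{4}+h_{22}^{5})^{2}+(h_{12}^{5}+h_{13}^{4})^{2}+(h_{33}^{4}+h_{23}^{5})^{2}\equiv 0$, so $h_{33}^{4}=-h_{23}^{5}$ and $h_{23}^{4}=-h_{22}^{5}$. Inserting these into the minimality relations $h_{11}^{\alpha}+h_{22}^{\alpha}+h_{33}^{\alpha}=0$ and using the identities $h_{22}^{4}=e_2(\arctan\lambda)$, $h_{23}^{4}=e_3(\arctan\lambda)$, $h_{23}^{5}=e_2(\arctan\mu)$, $h_{33}^{5}=e_3(\arctan\mu)$ of Lemma \ref{7.1} together with $\lambda=\mu$, one finds $h_{11}^{4}=h_{11}^{5}=0$. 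By Lemma \ref{7.5} this says $d\mathcal{G}(e_1)=0$, i.e. $\nabla^{\gind}_{e_1}e_1=0$; since $df(e_1)=0$, Lemma \ref{christoffel} then gives $\nabla^{\S^3}_{\alpha_1}\alpha_1=0$. Thus the integral curves of the kernel field are geodesics, so the fibers of $f$ are great circles of $\S^3$.

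\emph{Step 3: identification.} At this point $f\colon\S^3\to\S^2$ is a weakly conformal submersion whose fibers form a conformal foliation of $\S^3$ by great circles, so Heller's theorem \cite{heller} applies: up to conformal diffeomorphisms of $\S^3$ and of $\S^2$ the map $f$ is the Hopf fibration. The hard part is to promote the conformal symmetry of the \emph{source} to an isometry. One way: since the fibers are great circles, the conformal transformation of $\S^3$ furnished by Heller carries the Hopf great-circle foliation to a great-circle foliation, and a conformal transformation of $\S^3$ with this property must be an isometry. Equivalently, one verifies via the Codazzi equation (using $A(e_1,e_1)=0$) that $\lambda$ is constant along the fibers, so that after absorbing it into a conformal diffeomorphism $\varPhi\colon\S^2(1/2)\to\S^2$ the map $\varPhi^{-1}\circ f$ becomes a Riemannian submersion with totally geodesic fibers, to which the Escobales--Ucci rigidity (\cite{escobales},\cite{ucci}) applies and yields an isometry $T$ of $\S^3$. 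In either case $f=\varPhi\circ\varPi\circ T$, as claimed. I would expect the two most delicate points to be this final conformal-to-isometric rigidity and the verification that the identities of Lemmas \ref{7.1} and \ref{7.5} used in Step 2 persist, for a suitably chosen frame, at points where $\lambda=\mu$ — which, after Step 1, is all of $\S^3$.
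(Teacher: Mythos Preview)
Your argument is correct and follows the paper's proof closely. The only differences are cosmetic: in Step~1 the paper reaches $\lambda\equiv\mu$ by evaluating the Bochner formula at a minimum of $w$ (where $0\le\Delta w\le-2(\mu-\lambda)^2u_1^3<0$ gives the contradiction) rather than by integrating $\Delta w\le 0$ over $\S^3$; and in Step~3 the paper makes your first alternative precise via Liouville's theorem, writing the conformal map $T:\S^3\to\S^3$ furnished by Heller as $T_1\circ T_2$ with $T_1\in O(4)$ and, in stereographic coordinates, $T_2(x)=\varrho x-b$, then noting that a nontrivial $T_2$ would carry some Hopf great circle to a non-great circle.
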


\begin{proof}
Let us prove at first that $f$ is weakly conformal. On the contrary, we suppose that
$f$ is not weakly conformal. Let $x_0\in\S^3$ be a point where the globally defined function
$$
w=\frac{1+\lambda\mu}{\sqrt{(1+\lambda^2)(1+\mu^2)}}\le 1
$$
attains its minimum.
Clearly at the point $x_0$ it holds $0<w(x_0)<1$ and so $\lambda(x_0)<\mu(x_0)$. Moreover, in a sufficiently small
neighbourhood of $x_0$ the function $w$ is smooth. Observe now that from Lemma \ref{7.5} and our assumption, we get that
\begin{eqnarray*}
\mathcal{C}&=&(\mu-\lambda)\Big(\tfrac{1}{\mu}{\sum}_{k=1}^3(h^5_{1k})^2-\tfrac{1}{\lambda}{\sum}_{k=1}^3(h^4_{1k})^2\Big)\\
&=&(\mu-\lambda)\big(\mu\big|\operatorname{Im}\varphi\big|^2-\lambda\big|\operatorname{Re}\varphi\big|^2\big)\\
&\ge& 0.
\end{eqnarray*}
Now from the formula for the Laplacian of $w$ in Lemma \ref{8222021}, we obtain
$$
0\le \Delta w(x_0)\le-2(\mu(x_0)-\lambda(x_0))u^3_1(x_0)<0,
$$
which leads to a contradiction. Therefore, $f$ must be weakly conformal and $w=1$. Going back to the Laplacian
of $w$ we get that
$$
h^4_{22}=h^5_{23}=-h^4_{33}\quad\text{and}\quad h_{22}^5=-h^4_{23}=-h_{33}^5.
$$
Because of the minimality, it follows that
$$h^4_{11}=0\quad\text{and} \quad h^5_{11}=0.$$
Hence, $A(e_1,e_1)=0$ and from
Lemma \ref{Aform} we get that
$$\nabla^{\gind}_{e_1}e_1=\nabla^{\S^3}_{e_1}e_1\quad\text{and}\quad B(e_1,e_1)=0.$$
Since $df(e_1)=0$, we have that
$$
0=B(e_1,e_1)=\nabla^f_{e_1}df(e_1)-df(\nabla^{\S^3}_{e_1}e_1)=-df(\nabla^{\S^3}_{e_1}e_1).
$$
Consequently, the integral curves of the vector field $e_1$ are great circles.

According to Heller's Theorem
\cite[Theorem 3.7]{heller}, there exist conformal diffeomorphisms $T:\S^3\to\S^3$ and $\varPhi:\S^{2}(1/2)\to\S^2$
such that $f=\varPhi\circ\varPi\circ T$, where $\varPi$ is the standard Hopf-fibration.
By the classical Liouville's Theorem \cite[Theorem 6.3]{blair}, each conformal diffeomorphism of $\real{3}$
is a composition of similarities and inversions. Since the sphere is conformally equivalent with the extended
euclidean space, it follows that
each conformal transformation $T:\S^3\to\S^3$ can be written as
$T=T_1\circ T_2$ where $T_1\in O(4)$, and after
identifying $\S^3$ with $\real{3}\cup\{\infty\}$ via stereographic projection, $T_2$ takes the
form $T_2 (x)=\varrho x-b$
for some $\varrho>0$ and $b\in\real{4}$; see for example \cite[Proposition 1.1.7]{habermann} or \cite[page 48]{lee}.
Because the fibers of $f$ and $\varPi$ are great circles and $T$ maps the fibers of $f$ onto the fibers of $\varPi$,
we deduce that $\varrho=1$, $b=0$ and $T$ must be an isometry. This completes the proof.
\end{proof}

\begin{thmc}
Let $f_{kl}:\mathbb{S}^3\to\mathbb{S}^2$ be an equivariant minimal submersion. Then $f_{kl}$ is the composition of the
standard Hopf fibration with the dilation from the radius $1/2$ sphere into the unit sphere.
\end{thmc}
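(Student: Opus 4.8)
The plan is to reduce the statement to the one-variable analysis of Section~\ref{equiv} and then quote the pinching theorem. Since $f$ is equivariant it is, by the discussion around~\eqref{Tequiv}, an $a$-Hopf construction $f_{kl}$ for some pair of integers $(k,l)$, and by Proposition~\ref{secsing} its generating function $a\colon[0,\pi/2]\to[0,\pi]$ solves the minimality equation~\eqref{Eq: ODE alpha} with $a(0)=0$ and $a(\pi/2)=\pi$. By Lemma~\ref{singinv} the non-zero singular values of $df$ are $\lambda_{2}$ and $\lambda_{3}=|a_{s}|$, and since $f$ is a submersion both are strictly positive everywhere on $\S^{3}$; in particular $a$ is strictly increasing and $a_{s}$ is finite and non-zero at the two endpoints $s=0$ and $s=\pi/2$.

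The crucial step is to show that being a smooth submersion forces $|k|=|l|=1$. Near the singular circle $\{z_{1}=0\}$, written in the chart $z_{1}\mapsto\bigl(z_{1},\sqrt{1-|z_{1}|^{2}}\,e^{i\eta}\bigr)$, smoothness of the first component $\sin a(s)\,e^{i(k\xi+l\eta)}$ of $f$ in the variable $z_{1}$ requires $a$ to vanish to order $|k|$ at $s=0$; hence $a_{s}(0)=0$ whenever $|k|\ge 2$, which contradicts $\lambda_{3}(0)>0$. The symmetric argument at $\{z_{2}=0\}$ gives $|l|=1$. (Equivalently this is visible in~\eqref{Eq: ODE alpha}: near $s=0$ its two rational terms produce a $1/s$ singularity whose residue is a non-zero multiple of $1-k^{2}$, and this must cancel for $a$ to extend regularly across the endpoint.) This is the main obstacle; once it is settled, the rest is bookkeeping with the formulas of Sections~\ref{83928012021}--\ref{equiv} or a direct appeal to Theorem~\ref{thma}.

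With $|k|=|l|=1$ one has $k^{2}=l^{2}$. Plugging this into the components of the second fundamental form computed in the proof of Proposition~\ref{secsing} and using Lemma~\ref{7.5}, the entry $h_{11}^{5}$, which is proportional to $l^{2}-k^{2}$, vanishes identically, while $h_{12}^{5}$ is a multiple of $h_{13}^{4}$; a short computation then shows that the Gauss-map expression appearing in Theorem~\ref{thma} is a non-negative multiple of $(h_{13}^{4})^{2}$ where the two non-zero singular values differ, and vanishes where they coincide. Hence the hypothesis of Theorem~\ref{thma} holds, so $f$ is weakly conformal with totally geodesic fibers and $f=\varPhi\circ\varPi\circ T$ with $T$ an isometry of $\S^{3}$ and $\varPhi\colon\S^{2}(1/2)\to\S^{2}$ conformal; the equivariance then identifies $\varPhi$ with the dilation, giving the claim. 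Alternatively, and more transparently, once $|k|=|l|=1$ the second-order equation~\eqref{Eq: ODE alpha} is satisfied exactly by the solutions of the first-order weak-conformality equation~\eqref{Eq: ODE conformality}, namely $a(s)=2\arctan(c\tan s)$, and these are precisely the generating functions regular at both endpoints; each such $f$ is manifestly the Hopf fibration $\varPi$ post-composed with the dilation of $\S^{2}(1/2)$ onto $\S^{2}$.
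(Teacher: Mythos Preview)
Your main argument is correct and follows exactly the paper's route: reduce to the ODE of Proposition~\ref{secsing}, force $|k|=|l|=1$ from endpoint regularity (the paper does this by Taylor-expanding the rational term in~\eqref{Eq: ODE alpha} near $s=0$, which is precisely your parenthetical residue argument), then verify the Gauss-map pinching using the explicit second-fundamental-form entries of Proposition~\ref{secsing} and invoke Theorem~\ref{thma}. The paper writes the pinching quantity out as $\mathcal{C}=(\lambda_3-\lambda_2)^2/\big((1+\lambda_2^2)(1+\lambda_3^2)\big)\ge 0$, which is what your ``short computation'' yields.

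Two caveats on your closing remarks. First, ``the equivariance then identifies $\varPhi$ with the dilation'' is not justified: the $\S^1$-equivariant conformal automorphisms of $\S^2$ form a one-parameter family (those fixing the two poles), which is exactly what produces the family $a(s)=2\arctan(c\tan s)$ for $c>0$; the paper is equally terse at this step, simply asserting $a(s)=2s$. Second, your alternative direct-ODE argument is incomplete: the second-order equation~\eqref{Eq: ODE alpha} with $k^2=l^2=1$ has, locally, a two-parameter family of solutions, while the first-order weak-conformality equation~\eqref{Eq: ODE conformality} has only a one-parameter family, so the claim that the latter are ``precisely the generating functions regular at both endpoints'' needs an actual argument (e.g.\ a shooting or uniqueness analysis) that you have not supplied. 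Since this is offered only as an alternative, the proof as a whole still stands via the main route.
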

\begin{proof}
From Proposition \ref{secsing} the generating function $a:[0,\pi/2]\to[0,\pi]$ satisfies the differential equation
\begin{eqnarray}\label{pppppppp}
0&=&\frac{a_{ss}}{1+a_s^{2}}
+\frac{\cos s\sin s\big({\cos}\, 2s+(l^{2}-k^{2})\sin^{2}a\big)}{\sin^{2}s\cos^{2}s+\sin^{2}a\big(l^{2}\sin^{2}s+k^{2}\cos^{2}s\big)}a_s\nonumber\\
&&\quad\quad\quad\quad\,-\frac{\sin a\cos a\big(k^{2}\cos^{2}s+l^{2}\sin^{2}s\big)}
{\sin^{2}s\cos^{2}s+\sin^{2}a\big(l^{2}\sin^{2}s+k^{2}\cos^{2}s\big)}.
\end{eqnarray}
Since $f_{kl}$ is a submersion, $a(0)=0$ and $a(\pi/2)=\pi$, from Lemma \ref{singinv} we deduce that $a$ must be a
strictly increasing smooth function. In particular, close to $0$, the function $a$ must have
a Taylor expansion of the form
$$a(s)=a_s(0)s+\cdots$$
with $a_s(0)>0$. Observe that close to
$0$, the denominator $D$ of the last two terms in the right-hand side of \eqref{pppppppp} has an asymptotic behavior of the form
$$D(s)=\sin^{2}s\cos^{2}s+\sin^{2}a\big(l^{2}\sin^{2}s+k^{2}\cos^{2}s)\sim O(s^2).$$
By Taylor expanding the nominator $N$ around
the point $0$, we see that
\begin{eqnarray*}
&&\hspace{-20pt}N(s)\hspace{-2pt}=\hspace{-2pt}a_s\cos s\sin s({\cos}\, 2s+(l^{2}-k^{2})\sin^{2}a)\hspace{-2pt}-\hspace{-2pt}\sin a\cos a(k^{2}\cos^{2}s+l^{2}\sin^{2}s)\\
&&\,\,\,\sim O\big(a_s(0)(1-k^2)s\big).
\end{eqnarray*}
Therefore, $k^2=1$. Similarly, we deduce that $l^2=1$. Let us now denote by $\lambda_2$ and $\lambda_3$ the
non-zero singular values of $f_{kl}$. Following the notation and the formulas of Proposition \ref{secsing} for the second fundamental form of an equivariant submersion,
we obtain that $f_{kl}$ has totally geodesic fibers and that
\begin{eqnarray*}
\mathcal{C}&=&(\lambda_3-\lambda_2)\Big(\tfrac{1}{\lambda_3}{\sum}_{k=1}^3(h^5_{1k})^2-
\tfrac{1}{\lambda_2}{\sum}_{k=1}^3(h^4_{1k})^2\Big)
=\frac{(\lambda_3-\lambda_2)^2}{(1+\lambda_2^2)(1+\lambda_3^2)}\\
&\ge& 0.
\end{eqnarray*}
From Theorem \ref{thma}, we deduce that $f_{kl}$ is a weakly conformal minimal submersion with $k^2=l^2=1$.
From Lemma \ref{singinv}, we get that $a(s)=2s$ and so $f_{kl}$ is the standard Hopf-fibration. This completes
the proof.
\end{proof}

\begin{thma}
Let $f:U\to\mathbb{S}^2$ be a minimal map, where $U$ is an open subset of $\S^3$.
If $f$ has constant singular values and $\varGamma(f)$ constant norm of the second fundamental form, then either $f$ is
constant or weakly conformal. If $U=\S^3$ and $f$ non-constant, then there exists an isometry $T:\mathbb{S}^3\to\S^3$ such that $f=\varPhi\circ\varPi\circ T$, where
$\varPi$ is the
Hopf fibration and $\varPhi:\S^2(1/2)\to\S^2$ is the dilation $\varPhi(p)=2p$ from the
sphere $\S^2(1/2)$ of radius $1/2$.
\end{thma}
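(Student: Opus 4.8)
The plan is to reduce to the case where $f$ is a submersion, to prove equality of the non‑zero singular values by playing the two constancy hypotheses against the structure equations of Section~\ref{83928012021}, and then, in the global case, to invoke Theorem~\ref{thma} together with the computation of Proposition~\ref{comphol}. If $df\equiv 0$ there is nothing to prove, so assume $f$ is non‑constant; constancy of the singular values forces the rank of $df$ to be constant. If $\operatorname{rank}df\equiv 1$ exactly one singular value vanishes identically, so ``equal singular values'' would mean $f$ is constant; this case is excluded by a direct computation (e.g.\ the formula of Lemma~\ref{lapu}, applied to the constant function $u_{1}$, would force $|A|^{2}=-2\mu^{2}u_{1}^{2}<0$, where $\mu$ is the remaining singular value). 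Hence $f$ is a submersion with non‑zero singular values $0<\lambda\le\mu$, and it suffices to prove $\lambda=\mu$.

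Suppose, for contradiction, $\lambda<\mu$ (everywhere, by continuity). Since $\lambda$ and $\mu$ are constant, \eqref{Eq: components h} of Lemma~\ref{7.1} yields
\[
h_{12}^{4}=h_{22}^{4}=h_{23}^{4}=h_{13}^{5}=h_{23}^{5}=h_{33}^{5}=0,
\]
and minimality gives $h_{11}^{4}=-h_{33}^{4}$, $h_{11}^{5}=-h_{22}^{5}$; so $A$ is carried by $h_{11}^{4},h_{13}^{4},h_{11}^{5},h_{12}^{5}$ and $|A|^{2}=2\big((h_{11}^{4})^{2}+(h_{13}^{4})^{2}+(h_{11}^{5})^{2}+(h_{12}^{5})^{2}\big)$. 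Also $u_{1},u_{2},w=u_{1}+u_{2}$ are constant functions of $\lambda,\mu$, so $\Delta u_{1}=\Delta u_{2}=\Delta w=0$. Lemma~\ref{lapu} then gives $|A|^{2}=-2\lambda\mu\,h_{13}^{4}h_{12}^{5}-2(\lambda^{2}+\mu^{2})u_{1}^{2}$, whence $h_{13}^{4}h_{12}^{5}<0$ and, combining with $|A|^{2}\ge 4|h_{13}^{4}h_{12}^{5}|$, necessarily $\lambda\mu>2$; Lemma~\ref{lapv} produces a second relation of the same type (so that $h_{13}^{4}h_{12}^{5}$, $(h_{11}^{4})^{2}+(h_{13}^{4})^{2}$ and $(h_{11}^{5})^{2}+(h_{12}^{5})^{2}$ are all constant), while $\Delta w=0$ reduces to an identity (using $w=(1+\lambda\mu)u_{1}$) and gives nothing new. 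The remaining input must come from the Gauss and Codazzi equations: the Gauss equation forces
\[
K^{\gind}_{12}=\tfrac{1}{1+\lambda^{2}}+(h_{12}^{5})^{2}+(h_{11}^{5})^{2},\qquad
K^{\gind}_{13}=\tfrac{1}{1+\mu^{2}}+(h_{13}^{4})^{2}+(h_{11}^{4})^{2},\qquad
K^{\gind}_{23}=u_{1}^{2}+u_{2}^{2},
\]
while the same sectional curvatures can be computed directly from the now‑explicit connection of Lemma~\ref{7.4}, and the Codazzi equation, combined with the normal connection of Lemma~\ref{7.3}, yields first‑order identities for $h_{11}^{4},h_{13}^{4},h_{11}^{5},h_{12}^{5}$. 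Matching all of these and feeding in the two Bochner relations and the constancy of $|A|^{2}$ produces an over‑determined system for $\lambda,\mu$ and the four surviving components of $A$, which I expect to be inconsistent whenever $\lambda\neq\mu$. Carrying out this elimination is the technical heart of the argument and the step I anticipate to be hardest — it is exactly where the rigidity of the round metrics on $\S^{3}$ and $\S^{2}$ must be used — and it gives $\lambda=\mu$, i.e.\ $f$ is weakly conformal.

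For the global statement, assume $U=\S^{3}$ and $f$ non‑constant. By the previous paragraph $f$ is a weakly conformal minimal submersion; since the singular values are then equal, the pinching inequality of Theorem~\ref{thma} holds trivially (with equality), so Theorem~\ref{thma} provides an isometry $T\colon\S^{3}\to\S^{3}$ and a conformal diffeomorphism $\varPsi\colon\S^{2}(1/2)\to\S^{2}$ with $f=\varPsi\circ\varPi\circ T$. Write $\varPsi=\varPhi\circ\psi$, where $\varPhi\colon\S^{2}(1/2)\to\S^{2}$ is the dilation $\varPhi(p)=2p$ and $\psi$ is a conformal self‑diffeomorphism of $\S^{2}(1/2)=\CP^{1}$, i.e.\ a M\"obius transformation (the anti‑holomorphic case reduces to the holomorphic one after composing with an isometry). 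Repeating the computation of Proposition~\ref{comphol} for $\psi\circ\varPi$ — pre‑composition with the isometry $T$ and the target homothety $\varPhi$ changing $|A|^{2}$ only in an explicit way — expresses $|A_{f}|^{2}$ in terms of the conformal factor of $\psi$ and its gradient; its constancy forces that conformal factor to be constant, so $\psi$ is a homothety of $\CP^{1}$ and hence an isometry. Since every isometry of $\CP^{1}$ lifts to an isometry of $\S^{3}$ commuting with $\varPi$, we may absorb $\psi$ (and any leftover isometry of $\S^{2}$) into $T$, obtaining $f=\varPhi\circ\varPi\circ T$ with $\varPhi$ the stated dilation.
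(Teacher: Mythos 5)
Your overall strategy is the same as the paper's --- reduce to a submersion, use the constancy of the singular values to kill the components $h_{12}^{4},h_{22}^{4},h_{23}^{4},h_{13}^{5},h_{23}^{5},h_{33}^{5}$ via Lemma \ref{7.1}, extract $h_{13}^{4}h_{12}^{5}=\mathrm{const}<0$ from $\Delta u_{1}=0$, and finish the global statement with Theorem \ref{thma} and Heller's theorem --- but there is a genuine gap exactly where you flag it. Asserting that the Gauss, Codazzi and Bochner relations ``produce an over-determined system which I expect to be inconsistent whenever $\lambda\neq\mu$'' is not an argument. In particular, the four surviving components $\tau=h_{11}^{4}$, $z=h_{11}^{5}$, $\psi=h_{13}^{4}$, $\chi=h_{12}^{5}$ are a priori non-constant \emph{functions}, so there is no finite algebraic system whose inconsistency could be checked; one has to work with the first-order Codazzi identities themselves. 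In the paper this is the bulk of the proof: Codazzi applied to eight specific triples yields, among other things, two independent expressions for $e_{2}(h_{11}^{4})$ and two for $e_{3}(h_{11}^{5})$, whose compatibility is a linear system in $|A^{4}|^{2},|A^{5}|^{2}$ with determinant $2(\lambda^{2}-\mu^{2})^{2}\neq0$; solving it, differentiating the resulting algebraic identities along $e_{1},e_{2}$, and re-substituting the Codazzi relations forces $\tau=z=0$ after a case analysis (the subsets where $\tau\neq0$ and $z\neq0$ need separate treatment), and only then do the remaining relations reduce to algebraic equations in $\chi,\psi,\lambda,\mu$ whose four solution branches are eliminated one by one. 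None of this is present in your write-up, so the main assertion (equality of the singular values) is unproved. Incidentally, the Gauss-equation curvature identities you list are not needed; the information really comes from Codazzi.

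A smaller point on the global step: your claim that constancy of $|A_{f}|^{2}$ forces the conformal factor of $\psi$ to be constant does not follow from the formula of Proposition \ref{comphol} alone, since $t\mapsto 4t^{2}/(1+t^{2})^{2}$ is not injective and the gradient term can compensate. This is easily repaired, and the paper does it more directly: the singular values of $f$ are constant by hypothesis, and since $T$ is an isometry carrying the horizontal space of $f$ to that of $\varPi$, the nonzero singular value of $f$ equals the conformal factor of the map $\S^{2}(1/2)\to\S^{2}$, which is therefore constant and hence equal to $2$.
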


\begin{proof}

We will prove at first that the singular values $\lambda$ and $\mu$ of $f$ are equal. If $\mu$
is zero then also $\lambda$ is zero and $f$ is a constant map. Suppose now that $\lambda<\mu$
and that $\mu$ is positive. We distinguish the following cases:

{\em Case A:} Suppose at first that $\lambda=0$. Using Lemma \ref{lapu} we get
\begin{equation*}
0=\Delta u_1 =-|A|^{2}u_1-2\mu^{2} u_1^{3}.
\end{equation*}
Hence, $\mu=0$ and $f$ must be constant.

{\em Case B:} Suppose that the singular value $\lambda$ is also positive.
From (\ref{Eq: components h}), we get
$$h_{12}^{4}=h_{13}^{5}=h_{22}^{4}=h_{33}^{5}=h_{23}^{4}=h_{23}^{5}=0.$$
Hence, from minimality, we have that
$$h_{33}^{4}=-h_{11}^{4}\quad\text{and}\quad h_{22}^{5}=-h_{11}^{5}.$$
For the sake of convenience, we set
$$h_{12}^{5}=\chi,\,\, h_{13}^{4}=\psi,\,\, h_{11}^{5}=z,\,\, h_{11}^{4}=\tau.$$
Using Lemma \ref{lapu}, we get
\begin{equation*}
|A|^{2}=-2\lambda\mu\,  \chi \psi-\frac{2(\lambda^{2}+\mu^{2})}{(1+\lambda^{2})(1+\mu^{2})}.
\end{equation*}
Since by assumption, the norm of the second fundamental form is constant, it follows that $ \chi \psi$ is
a negative constant function on $U\subset\mathbb{S}^{3}$.
Applying the Codazzi equation to the triples of vectors $\{e_{1},e_{2},e_{1}\}$, $\{e_{1},e_{3},e_{1}\}$,
$\{e_{1}, e_{2}, e_{2}\}$, $\{e_{1}, e_{3}, e_{3}\}$,
$\{e_{1}, e_{2}, e_{3}\}$, $\{e_{3}, e_{1}, e_{2}\}$,
$\{e_{2},e_{3},e_{3}\}$
and $\{e_{2},e_{3},e_{2}\}$, and making use of Lemma \ref{7.3} and Lemma \ref{7.4},
we obtain
\begin{eqnarray}\label{Eq: Codazzi 10}
&&\hspace{-40pt}e_{2}(\tau)=\tfrac{\lambda(1+\mu^{2})}{\mu^{2}-\lambda^{2}}(\chi^{2}+\psi^{2})
+\tfrac{2\lambda^{2}(1+\mu^{2})}{\mu(\mu^{2}-\lambda^{2})}\chi \psi-\tfrac{1}{\lambda}\tau^{2}
+\tfrac{\lambda(1+\mu^{2})}{\mu^{2}-\lambda^{2}}z^{2}-\tfrac{\lambda}{1+\lambda^{2}},
\label{Eq: Codazzi 10}\\
&&\hspace{-40pt}e_{2}(\tau)=
\tfrac{\lambda(1-\mu^{2})}{(1+\lambda^{2})(1+\mu^{2})}
-\tfrac{2}{\mu}\chi \psi+\tfrac{1}{\lambda}\psi^{2}-\tfrac{\lambda(1+\mu^{2})}{\mu^{2}-\lambda^{2}}\tau^{2}
\label{Eq: Codazzi 8},\\
&&\hspace{-40pt}e_{3}(z)=\tfrac{\mu(1+\lambda^{2})}{\lambda^{2}-\mu^{2}}(\chi^{2}+y^{2})-\tfrac{2\mu^{2}(1+\lambda^{2})}{\lambda(\mu^2-\lambda^{2})}\chi \psi
-\tfrac{1}{\mu}z^{2}
+\tfrac{\mu(1+\lambda^{2})}{\lambda^{2}-\mu^{2}}\tau^{2}-\tfrac{\mu}{1+\mu^{2}}
\label{Eq: Codazzi 11},\\
&&\hspace{-40pt}e_{3}(z)=
\tfrac{\mu(1-\lambda^{2})}{(1+\lambda^{2})(1+\mu^{2})}
-\tfrac{2}{\lambda}\chi \psi+\tfrac{1}{\mu}\chi^{2}+\tfrac{\mu(1+\lambda^{2})}{\mu^2-\lambda^{2}}z^{2}
.\label{Eq: Codazzi 9}
\end{eqnarray}
Moreover,
\begin{eqnarray}
e_{1}(\chi)-e_{2}(z)&=&\frac{2\mu^{2}-\lambda^{2}
+\lambda^{2}\mu^{2}}{\lambda(\mu^{2}-\lambda^{2})} \tau z,
\label{Eq: Codazzi 10 I}\\
e_{1}(z)+e_{2}(\chi)&=&-\frac{2}{\lambda}\tau \chi,
\label{Eq: Codazzi 12}
\end{eqnarray}
and
\begin{eqnarray}
e_{1}(\psi)-e_{3}(\tau)&=&
\frac{2\lambda^{2}-\mu^{2}+\lambda^{2}\mu^{2}}{\mu(\lambda^{2}-\mu^{2})}\tau z
\label{Eq: Codazzi 11 I}\\
e_{1}(\tau)+e_{3}(\psi)&=&-\frac{2}{\mu}\psi z.
\label{Eq: Codazzi 12 I}
\end{eqnarray}
Additionally,
\begin{eqnarray}
e_{2}(\psi)&=&\frac{2\lambda^{2}-\mu^{2}+\lambda^{2}\mu^{2}}{\mu(\lambda^{2}-\mu^{2})}\chi\tau
+
\frac{\mu^{2}(1+\lambda^{2})}{\lambda(\lambda^{2}-\mu^{2})}\tau \psi \label{Eq: Codazzi 13},\\
e_{3}(\chi)&=&\frac{2\mu^{2}-\lambda^{2}+\lambda^{2}\mu^{2}}{\lambda(\mu^{2}-\lambda^{2})}\psi z+
\frac{\lambda^{2}(1+\mu^{2})}{\mu(\mu^{2}-\lambda^{2})}\chi z.
\label{Eq: Codazzi 13 I}
\end{eqnarray}
Substituting (\ref{Eq: Codazzi 10}) in (\ref{Eq: Codazzi 8}) and  (\ref{Eq: Codazzi 11}) in (\ref{Eq: Codazzi 9}), we get the
following equations:
\begin{eqnarray*}
&&\hspace{-20pt}\lambda^{2}(1+\mu^{2})|A^5|^2+(\lambda^{2}\mu^{2}+2\lambda^{2}-\mu^{2})|A^4|^2\hspace{-4pt}=\hspace{-2pt}
\tfrac{4\lambda^{2}(\mu^{2}-\lambda^{2})}{(1+\lambda^{2})(1+\mu^{2})}\hspace{-2pt}-\hspace{-2pt}4\lambda\mu(1+\lambda^{2})\chi\psi,\\
&&\hspace{-20pt}(\lambda^{2}\mu^{2}+2\mu^{2}-\lambda^{2})|A^5|^2+\mu^{2}(1+\lambda^{2})|A^4|^2
\hspace{-4pt}=\hspace{-2pt}\tfrac{4\mu^{2}(\lambda^{2}-\mu^{2})}{(1+\lambda^{2})(1+\mu^{2})}\hspace{-2pt}-\hspace{-2pt}4\lambda\mu(1+\mu^{2})\chi\psi.
\end{eqnarray*}
Let us regard the above as a system with respect to $|A^5|^2$ and $|A^4|^2$. Then,
its main determinant is
$$\mathcal{D}=2(\lambda^{2}-\mu^{2})^{2}.$$
As a consequence, the system has a unique solution which is given by
\begin{eqnarray}\label{Eq: Codazzi 14}
\begin{array}{ll}
\chi^{2}+z^{2}=\frac{1}{\mu^{2}-\lambda^{2}}\big(\tfrac{3\lambda^{2}\mu^{2}+\lambda^{4}\mu^{2}+\lambda^{2}\mu^{4}-\mu^{4}}{(1+\lambda^{2})(1+\mu^{2})}+\lambda\mu(\lambda^{2}\mu^{2}-\mu^{2}-2)\chi \psi\big),\\
\psi^{2}+\tau^{2}=\frac{1}{\lambda^{2}-\mu^{2}}\big(\tfrac{3\lambda^{2}\mu^{2}+\lambda^{2}\mu^{4}+\lambda^{4}\mu^{2}-\lambda^{4}}{(1+\lambda^{2})(1+\mu^{2})}
+\lambda\mu(\lambda^{2}\mu^{2}-\lambda^{2}-2)\chi \psi\big).
\end{array}
\end{eqnarray}
Differentiating the first equation of (\ref{Eq: Codazzi 14}) with respect to $e_{1}$ and $e_{2}$ and using (\ref{Eq: Codazzi 10 I}) and (\ref{Eq: Codazzi 12}), we obtain the following equations:
\begin{eqnarray}\label{Eq: systemdiffalgebraic}
\begin{array}{ll}
\chi e_{1}(\chi)- z e_{2}(\chi)=\frac{2}{\lambda}\tau\chi z,\\
\\
z e_{1}(\chi)+\chi e_{2}(\chi)=\frac{2\mu^{2}-\lambda^{2}+\lambda^{2}\mu^{2}}{\lambda(\mu^{2}-\lambda^{2})}z^{2} \tau.
\end{array}
\end{eqnarray}
Since $\chi \psi < 0$, the main determinant of (\ref{Eq: systemdiffalgebraic}) is
$$\mathcal{D}_{1}= \chi^{2}+z^{2} \neq 0.$$
As a consequence, the system (\ref{Eq: systemdiffalgebraic}) has a unique solution which is given by
\begin{equation}\label{Eq: solutionsystemdiffalgebraic}
e_{1}(\chi)=\big(2 \chi^2+\tfrac{2\mu^{2}-\lambda^{2}+\lambda^{2}\mu^{2}}{\mu^{2}-\lambda^{2}}z^{2}\big)
\tfrac{\tau z}{\lambda(x^{2}+z^{2})}\,\,\,\text{and}\,\,\, e_{2}(\chi)=\tfrac{\lambda(1+\mu^{2}) \chi \tau z^{2}}{(\mu^{2}-\lambda^{2})(x^{2}+z^{2})}.
\end{equation}
As a consequence, we get
\begin{equation}\label{Eq: solutionsystemdiffalgebraicI}
e_{2}(z)=e_{1}(\chi)-\frac{2\mu^{2}-\lambda^{2}+\lambda^{2}\mu^{2}}{\lambda(\mu^{2}-\lambda^{2})} z \tau=\frac{\lambda(1+\mu^{2}) \chi^{2} \tau z}{(\lambda^{2}-\mu^{2})(\chi^{2}+z^{2})}.
\end{equation}
Since the function $\chi \psi$ is constant on $U$, we have $e_{2}(\chi \psi)=0$. Using (\ref{Eq: Codazzi 13}) and (\ref{Eq: solutionsystemdiffalgebraic}), we get
\begin{equation*}
\frac{\tau}{\mu^{2}-\lambda^{2}}\Big(\frac{\lambda(1+\mu^{2})}{x^{2}+z^{2}}\chi \psi z^{2}-\frac{\mu^{2}(1+\lambda^{2})}{\lambda}\chi \psi-\frac{2\lambda^{2}-\mu^{2}+\lambda^{2}\mu^{2}}{\mu}\chi^{2}\Big)=0.
\end{equation*}
If $\tau=0$ on $U$, then (\ref{Eq: Codazzi 8}) gives that the function $\psi$ is constant on
$U$. Using (\ref{Eq: Codazzi 12 I}), we obtain that $z=0$ on $U$. In the sequel, we consider the open subset
$$\mathcal{W}_{1}=\{p \in U: \tau(p) \neq 0\}$$
of $U$. Then,
\begin{equation}\label{Eq: solutionsystemdiffalgebraicII}
\frac{\lambda(1+\mu^{2})}{\chi^{2}+z^{2}}\chi \psi z^{2}-\frac{\mu^{2}(1+\lambda^{2})}{\lambda}
\chi \psi-\frac{2\lambda^{2}-\mu^{2}+\lambda^{2}\mu^{2}}{\mu}\chi^{2}=0.
\end{equation}
on $\mathcal{W}_1$. Differentiating (\ref{Eq: solutionsystemdiffalgebraicII}) with respect to $e_{2}$ and using (\ref{Eq: solutionsystemdiffalgebraic}) and (\ref{Eq: solutionsystemdiffalgebraicI}), we obtain
\begin{equation*}
\tau z^{2}\chi^{2}\Big(\frac{\lambda(1+\mu^{2})}{\chi^{2}+z^{2}}\chi \psi+\frac{2\lambda^{2}-\mu^{2}+\lambda^{2}\mu^{2}}{\mu}\Big)=0.
\end{equation*}
If $z=0$ on $\mathcal{W}_1$, then
$$e_{1}(z)=e_{3}(z)=0$$
on $\mathcal{W}_1$. Using (\ref{Eq: Codazzi 9}), we easily conclude that the function $\chi$ is constant on $\mathcal{W}_1$. Consequently, (\ref{Eq: Codazzi 12}) gives $\tau=0$ on $\mathcal{W}_1$ which is a contradiction. In the sequel, we consider the open subset
$$\mathcal{W}_2=\{p \in \mathcal{W}_1: z(p) \neq 0\}$$
of $\mathcal{W}_1$. Then,
\begin{equation}\label{Eq: solutionsystemdiffalgebraicIII}
\chi\psi = -\frac{2\lambda^{2}-\mu^{2}+\lambda^{2}\mu^{2}}{\lambda \mu (1+\mu^{2})}(\chi^{2}+z^{2})
\end{equation}
on $\mathcal{W}$. Substituting (\ref{Eq: solutionsystemdiffalgebraicIII}) in (\ref{Eq: solutionsystemdiffalgebraicII}), we get
$$-\frac{1}{\mu}+\frac{\mu(1+\lambda^{2})}{\lambda^{2}(1+\mu^{2})}=0,$$
from where we get $\lambda=\mu$, a contradiction. As a consequence, $\tau=z=0$ on $U$.
Then, (\ref{Eq: Codazzi 10}) and (\ref{Eq: Codazzi 11}) give
\begin{eqnarray}
\lambda\mu(\chi^{2}+\psi^{2})+2\lambda^{2}\chi \psi=\frac{\lambda\mu(\mu^{2}-\lambda^{2})}{(1+\lambda^{2})(1+\mu^{2})},\label{Eq: final Codazzi I}\\
\lambda\mu(\chi^{2}+\psi^{2})+2\mu^{2}\chi \psi=\frac{\lambda\mu(\lambda^{2}-\mu^{2})}{(1+\lambda^{2})(1+\mu^{2})}. \label{Eq: final Codazzi II}
\end{eqnarray}
Adding and subtracting (\ref{Eq: final Codazzi I}), (\ref{Eq: final Codazzi II}), we get
$$
\chi^{2}+\psi^{2}=\frac{\mu^{2}+\lambda^{2}}{(1+\lambda^{2})(1+\mu^{2})}\quad\text{and}\quad
\chi\psi=-\frac{\lambda\mu}{(1+\lambda^{2})(1+\mu^{2})}.
$$
Equivalently,
\begin{equation}\label{Eq: final Codazzi III}
\chi+\psi=\pm (\mu-\lambda)u_{1}\quad\text{and}\quad
\chi-\psi=\pm (\mu+\lambda)u_{1}.
\end{equation}
The solution of (\ref{Eq: final Codazzi III}) are the pairs:
$$
\begin{array}{ll}
(\chi, \psi)=(\mu u_{1}, -\lambda u_{1}),& (\chi , \psi)=(-\lambda u_{1}, \mu u_{1}),\\
\\
(\chi, \psi)=(\lambda u_{1}, -\mu u_{1}),& (\chi, \psi)=(-\mu u_{1}, \lambda u_{1}).
\end{array}
$$
We suppose that $\chi=\mu u_{1}$ and $\psi=-\lambda u_{1}$. Then, (\ref{Eq: Codazzi 8}) and (\ref{Eq: Codazzi 9}) give $\lambda^{2}=\mu^{2}=4$ which is a contradiction. If $\chi=-\lambda u_{1}$ and $\psi=\mu u_{1}$, (\ref{Eq: Codazzi 8}) and (\ref{Eq: Codazzi 9}) give
$$3\lambda^{2}+\mu^{2}=\lambda^{2}\mu^{2}\quad\text{and}\quad 3\mu^{2}+\lambda^{2}=\lambda^{2}\mu^{2}.$$ Therefore, $\lambda=\mu$ which is again a contradiction.
The other two cases for the values $\chi$ and $\psi$ are treated similarly.

Let us suppose now that $f$ is non-constant and $U=\S^3$. As in the proof of Theorem \ref{thma}, from the minimality and the gradient of $w\equiv 1$, we deduce that $f$ is a conformal
submersion from the unit euclidean sphere $\S^3$ into the euclidean unit sphere $\S^2$ the with totally geodesic fibers.  From the result of Heller \cite[Theorem 3.7]{heller}, it follows that there exists an isometry
$T$ of the euclidean unit sphere $\S^3$, which maps the kernel of $df$ into the kernel of the differential of the
standard Hopf fibration $\varPi$, and a conformal diffeomorphism $\varPhi:\S^2(1/2)\to\S^2$ such that $f=\varPhi\circ\varPi\circ T$. Denote by $\varrho$ the conformal factor of $\varPhi$. Since $T$ is an isometry, it will map the 
horizontal space of $f$ into the horizontal space of the Hopf fibration. Consequently, if $v$ is a horizontal unit vector
of $f$, then
$$
\lambda^2=\gind_{\S^2}\big(d\varPhi\circ d\varPi\circ T(v),d\varPhi\circ d\varPi\circ T(v)\big)=\varrho^2.
$$
Because $\lambda$ is constant, the conformal factor $\varrho$ must be constant. Since $\varPhi$ is conformal, we get that $\varrho=2$.
This completes the proof.
\end{proof}

\begin{bibdiv}
\begin{biblist}

\bib{almeida}{article}{
   author={de Almeida, S.C.},
   author={Brito, F.G.B.},
   title={Closed $3$-dimensional hypersurfaces with constant mean curvature
   and constant scalar curvature},
   journal={Duke Math. J.},
   volume={61},
   date={1990},
   pages={195--206},
}

\bib{assimos}{article}{
   author={Assimos, R.},
   author={Jost, J.},
   title={The geometry of maximum principles and a Bernstein theorem in codimension 2},
   journal={arXiv:1811.09869},
   date={2018},
   pages={1--27},
}

\bib{baird}{article}{
   author={Baird, P.},
   title={The Gauss map of a submersion},
   conference={
      title={Miniconference on Geometry and Partial Differential Equations},
   },
   book={
      publisher={The Australian National University},
      place={Centre for Mathematical Analysis, Canberra AUS},
   },
   date={1986},
   pages={8--24},
}

\bib{baird1}{book}{
   author={Baird, P.},
   author={Wood, J.},
   title={Harmonic morphisms between Riemannian manifolds},
   series={London Mathematical Society Monographs. New Series},
   volume={29},
   publisher={The Clarendon Press, Oxford University Press, Oxford},
   date={2003},
}

\bib{blair}{book}{
   author={Blair, D.-E.},
   title={Inversion theory and conformal mapping},
   series={Student Mathematical Library
Series Profile},
   publisher={American Mathematical Society (AMS)},
   place={Providence, RI}
   date={2000},
}

\bib{chang}{article}{
   author={Chang, S.},
   title={On minimal hypersurfaces with constant scalar curvatures in $\mathbb{S}^4$},
   journal={J. Differential Geom.},
   volume={37},
   date={1993},
   pages={523--534},
}

\bib{chern1}{book}{
   author={Chern, S.-S.},
   title={Minimal submanifolds in a Riemannian manifold},
   series={University of Kansas, Department of Mathematics Technical Report
   19 (New Series)},
   publisher={Univ. of Kansas Lawrence},
   place={Kan.},
   date={1968},
}

\bib{chern}{article}{
   author={Chern, S.-S.},
   author={do Carmo, M.},
   author={Kobayashi, S.},
   title={Minimal submanifolds of a sphere with second fundamental form of
   constant length},
   conference={
      title={Functional Analysis and Related Fields (Proc. Conf. for M.
      Stone, Univ. Chicago, Chicago, Ill., 1968)},
   },
   book={
      publisher={Springer},
      place={New York},
   },
   date={1970},
   pages={59--75},
}

\bib{ding}{article}{
   author={Ding, Q.},
   author={Jost, J.},
   author={Xin, Y.-L.},
   title={Minimal graphic functions on manifolds of nonnegative Ricci
   curvature},
   journal={Comm. Pure Appl. Math.},
   volume={69},
   date={2016},
   pages={323--371},
}

\bib{eells1}{article}{
   author={Eells, J.},
   title={Minimal graphs},
   journal={Manuscripta Math.},
   volume={28},
   date={1979},
   pages={101--108},
}

\bib{eells}{article}{
   author={Eells, J.},
   author={Sampson, J.H.},
   title={Harmonic mappings of Riemannian manifolds},
   journal={Amer. J. Math.},
   volume={86},
   date={1964},
   pages={109--160},
}

\bib{eells2}{book}{
   author={Eells, J.},
   author={Ratto, A.},
   title={Harmonic maps and minimal immersions with symmetries},
   series={Annals of Mathematics Studies},
   volume={130},
   publisher={Princeton University Press, Princeton, NJ},
   date={1993},
}

\bib{escobales}{article}{
author={Escobales, R.},
title={Riemannian submersions with totally geodesic fibers},
journal={J. Differential Geom.},
volume={10},
date={1975},
pages={253-276},
}

\bib{gage}{article}{
author={Gage, M.},
title={A note on skew-Hopf fibrations},
journal={Proc. Amer. Math. Soc.},
volume={93},
date={1985},
pages={145-150},
}

\bib{gluck1}{article}{
   author={Gluck, H.},
   author={Gu, W.},
   title={Volume-preserving great circle flows on the 3-sphere},
   journal={Geom. Dedicata},
   volume={88},
   date={2001},
   pages={259--282},
}

\bib{gluck2}{article}{
   author={Gluck, H.},
   author={Ziller, W.},
   title={On the volume of a unit vector field on the three-sphere},
   journal={Comment. Math. Helv.},
   volume={61},
   date={1986},
   pages={177--192},
}

\bib{gluck3}{article}{
   author={Gluck, H.},
   author={Warner, F.},
   title={Great circle fibrations of the three-sphere},
   journal={Duke Math. J.},
   volume={50},
   date={1983},
   pages={107-132},
}

\bib{gluck4}{article}{
   author={Gluck, H.},
   author={Warner, F.},
   author={Yang, C.T.}
   title={Division algebras, fibrations of spheres by great spheres and the topological
   determination of space by the gross behavior of its geodesics},
   journal={Duke Math. J.},
   volume={50},
   date={1983},
   pages={1041-1076},
}

\bib{gluck5}{article}{
   author={Gluck, H.},
   author={Warner, F.},
   author={Ziller, W.}
   title={The geometry of the Hopf fibrations},
   journal={Enseign. Math. (2)},
   volume={32},
   date={1986},
   pages={173-198},
}

\bib{gluck6}{article}{
   author={Gluck, H.},
   author={Warner, F.},
   author={Ziller, W.}
   title={Fibrations of spheres by parallel great spheres and Berger's rigidity theorem},
   journal={Ann. Global Anal. Geom.},
   volume={5},
   date={1987},
   pages={53-82},
}

\bib{habermann}{book}{
author={Habermann, L.},
title={Riemannian metrics of constant mass and moduli spaces of conformal structures},
journal={Lect. Notes Math.},
volume={1743}
publisher={Berlin: Springer}
date={2000},
}	

\bib{h-s1}{article}{
   author={Hasanis, Th.},
   author={Savas-Halilaj, A.},
   author={Vlachos, Th.},
   title={On the Jacobian of minimal graphs in $\Bbb R^4$},
   journal={Bull. Lond. Math. Soc.},
   volume={43},
   date={2011},
   pages={321--327},
}

\bib{h-s2}{article}{
   author={Hasanis, Th.},
   author={Savas-Halilaj, A.},
   author={Vlachos, Th.},
   title={Minimal graphs in $\Bbb R^4$ with bounded Jacobians},
   journal={Proc. Amer. Math. Soc.},
   volume={137},
   date={2009},
   pages={3463--3471},
}

\bib{heller}{article}{
author={Heller, S.},
title={Conformal fibrations of $S^3$ by circles},
journal={In: Loubeau, E., Montaldo, S. (eds.) Harmonic
Maps and Differential Geometry, Contemp. Math. 542, AMS, Providence},
date={2011},
pages={195-202}
}

\bib{hopf}{article}{
author={Hopf, H.},
title={\"Uber die Abbildungen der dreidimensionalen Sph\"are auf die Kugelfl\"ache},
journal={Math. Ann},
volume={104},
date={1931},
pages={637-665}
}

\bib{jost5}{article}{
   author={Jost, J.},
   author={Xin, Y.-L.},
   author={Yang, L.},
   title={Submanifolds with constant Jordan angles and rigidity of the
   Lawson-Osserman cone},
   journal={Asian J. Math.},
   volume={22},
   date={2018},
   pages={75--109},
}

\bib{jost1}{article}{
   author={Jost, J.},
   author={Xin, Y.-L.},
   author={Yang, L.},
   title={A spherical Bernstein theorem for minimal submanifolds of higher
   codimension},
   journal={Calc. Var. Partial Differential Equations},
   volume={57},
   date={2018},
   number={6},
   pages={Art. 166, 21},
}

\bib{jost2}{article}{
   author={Jost, J.},
   author={Xin, Y.-L.},
   author={Yang, L.},
   title={The geometry of Grassmannian manifolds and Bernstein-type theorems
   for higher codimension},
   journal={Ann. Sc. Norm. Super. Pisa Cl. Sci. (5)},
   volume={16},
   date={2016},
   pages={1--39},
}

\bib{jost3}{article}{
   author={Jost, J.},
   author={Xin, Y.-L.},
   author={Yang, L.},
   title={Curvature estimates for minimal submanifolds of higher codimension
   and small G-rank},
   journal={Trans. Amer. Math. Soc.},
   volume={367},
   date={2015},
   pages={8301--8323},
}

\bib{jost4}{article}{
   author={Jost, J.},
   author={Xin, Y.-L.},
   author={Yang, L.},
   title={The Gauss image of entire graphs of higher codimension and
   Bernstein type theorems},
   journal={Calc. Var. Partial Differential Equations},
   volume={47},
   date={2013},
   pages={711--737},
}

\bib{lee}{article}{
   author={Lee, J.},
   author={Parker, T.H.},
   title={The Yamabe problem},
   journal={Bull. Am. Math. Soc.},
   volume={17},
   date={1987},
   pages={37-91},
}

\bib{mckay}{article}{
author={McKay, B.}
title={The Blaschke conjecture and great circle fibrations of spheres},
journal={Amer. J. Math.},
volume={126},
date={2004}
pages={1155-1191}
}

\bib{peng1}{article}{
   author={Peng, C.K.},
   author={Terng, C.L.},
   title={Minimal hypersurfaces of spheres with constant scalar curvature},
   conference={
      title={Seminar on minimal submanifolds},
   },
   book={
      series={Ann. of Math. Stud.},
      volume={103},
      publisher={Princeton Univ. Press},
      place={Princeton, NJ},
   },
   date={1983},
   pages={177--198},
}

\bib{peng2}{article}{
   author={Peng, C.K.},
   author={Terng, C.L.},
   title={The scalar curvature of minimal hypersurfaces in spheres},
   journal={Math. Ann.},
   volume={266},
   date={1983},
   pages={105--113},
}

\bib{savas4}{article}{
   author={Savas-Halilaj, A.},
   author={Smoczyk, K.},
   title={Mean curvature flow of area decreasing maps between Riemann
   surfaces},
   journal={Ann. Global Anal. Geom.},
   volume={53},
   date={2018},
   pages={11--37},
}

\bib{savas3}{article}{
   author={Savas-Halilaj, A.},
   author={Smoczyk, K.},
   title={Evolution of contractions by mean curvature flow},
   journal={Math. Ann.},
   volume={361},
   date={2015},
   pages={725--740},
}
\bib{savas2}{article}{
   author={Savas-Halilaj, A.},
   author={Smoczyk, K.},
   title={Homotopy of area decreasing maps by mean curvature flow},
   journal={Adv. Math.},
   volume={255},
   date={2014},
   pages={455--473},
}

\bib{savas1}{article}{
   author={Savas-Halilaj, A.},
   author={Smoczyk, K.},
   title={Bernstein theorems for length and area decreasing minimal maps},
   journal={Calc. Var. Partial Differential Equations},
   volume={50},
   date={2014},
   pages={549--577},
}

\bib{schoen2}{article}{
   author={Schoen, R.},
   title={The role of harmonic mappings in rigidity and deformation
   problems},
   conference={
      title={Complex geometry},
      address={Osaka},
      date={1990},
   },
   book={
      series={Lecture Notes in Pure and Appl. Math.},
      volume={143},
      publisher={Dekker},
      place={New York},
   },
   date={1993},
   pages={179--200},
}

\bib{swx}{article}{
   author={Smoczyk, K.},
   author={Wang, G.},
   author={Xin, Y.-L.},
   title={Bernstein type theorems with flat normal bundle},
   journal={Calc. Var. Partial Differential Equations},
   volume={26},
   date={2006},
   pages={57--67},
}

\bib{ucci}{article}{
author={Ucci, J.}
title={On the nonexistence of Riemannian submersions from $CP(7)$ and $QP(3)$},
journal={Proc. Amer. Math. Soc.},
volume={88},
date={1983}
pages={698-700}
}

\bib{yang1}{article}{
author={Yang, C.T.}
title={Smooth great circle fibrations and an application to the topological Blaschke conjecture},
journal={Trans. Amer. Math. Soc.},
volume={320},
date={1990}
pages={504-524}
}

\bib{yang2}{article}{
author={Yang, C.T.}
title={On smooth great circle fibrations of a round sphere},
journal={Differential geometry (Shanghai, 1991) World Sci. Publishing, River Edge, NJ},
date={1993}
pages={301-309}
}

\bib{xin}{book}{
   author={Xin, Y.-L.},
   title={Geometry of harmonic maps},
   series={Progress in Nonlinear Differential Equations and their
   Applications, {\bf 23}},
   publisher={Birkh\"auser Boston Inc.},
   place={Boston, MA},
   date={1996},
}

\bib{wangg}{article}{
   author={Wang, G.},
   title={$\S^1$-invariant harmonic maps from $\S^3$ to $\S^2$},
   journal={Bull. London Math. Soc.},
   volume={32},
   date={2000},
   pages={729--735},
}

\bib{wangm}{article}{
   author={Wang, M.-T.},
   title={On graphic Bernstein type results in higher codimension},
   journal={Trans. Amer. Math. Soc.},
   volume={355},
   date={2003},
   pages={265--271},
}

\bib{wang}{article}{
   author={Wang, M.-T.},
   title={Long-time existence and convergence of graphic mean curvature flow
   in arbitrary codimension},
   journal={Invent. Math.},
   volume={148},
   date={2002},
   pages={525--543},
}

\bib{wang1}{article}{
   author={Wang, M.-T.},
   title={Mean curvature flow of surfaces in Einstein four-manifolds},
   journal={J. Differential Geom.},
   volume={57},
   date={2001},
   pages={301--338},
}

\bib{zawadzki}{article}{
   author={Zawadzki, T.},
   title={On conformal submersions with geodesic or minimal fibers},
   journal={Ann. Glob. Anal. Geom.},
   volume={58},
   date={2020},
   pages={191-205},
}

\bib{zawadzki2}{article}{
   author={Zawadzki, T.},
   title={Existence conditions for conformal submersions with totally umbilical fibers},
   journal={Differ. Geom. Appl.},
   volume={35},
   date={2014},
   pages={69-85},
}
		
\end{biblist}
\end{bibdiv}

\end{document}